\newcommand{\Poi}{\operatorname*{\mathrm{Poi}}}
\newcommand{\Exp}{\operatorname*{\mathbb{E}}}
\newcommand{\poi}[2]{\text{poi}(#1,#2)}
\newcommand{\poii}[1]{\poi{#1}{i}}
\newcommand{\hist}[2]{h^{#1}_{#2}}
\newcommand{\histp}{\hist{P}{y_j}}
\newcommand{\histq}{\hist{Q|y_j}{x}}
\newcounter{nTheorems}
\newtheorem{theorem}[nTheorems]{Theorem}
\newtheorem{corollary}[nTheorems]{Corollary}
\newtheorem{lemma}[nTheorems]{Lemma}
\newtheorem{assumption}[nTheorems]{Assumption}
\theoremstyle{definition}
\newtheorem{example}{Example}
\newtheorem{definition}[nTheorems]{Definition}
\newcommand{\eps}{\varepsilon}
\newcommand{\authnote}[2]{[{\color{blue}\textbf{#1:} #2}]}
\newcommand{\chernoff}{\stackrel{\text{Ch}}{\leq}}
\newcommand{\trung}[1]{\authnote{Trung}{#1}}
\newcommand{\paul}[1]{\authnote{Paul}{#1}}
\newcommand{\hongao}[1]{{\color{cyan} [\textbf{Hongao:} #1]}}
\title{Improving Pearson’s chi-squared test: hypothesis testing of distributions - optimally\footnote{Work partially funded by NSF award CCF-2127806}}
\author[1,2]{Trung Dang}
\author[1,3]{Walter McKelvie}
\author[1]{Paul Valiant}
\author[1]{Hongao Wang}
\affil[1]{Purdue University}
\affil[2]{UT Austin}
\affil[3]{Columbia University}
\begin{document}
\maketitle

\begin{abstract}
Pearson's chi-squared test, from 1900, is the standard statistical tool for ``hypothesis testing on distributions": namely, given samples from an unknown distribution $Q$ that may or may not equal a hypothesis distribution $P$, we want to return ``yes" if $P=Q$ and ``no" if $P$ is far from $Q$. While the chi-squared test is easy to use, it has been known for a while that it is not ``data efficient", it does not make the best use of its data. Precisely, for accuracy $\eps$ and confidence $\delta$, and given $n$ samples from the unknown distribution $Q$, a tester should return ``yes" with probability $>1-\delta$ when $P=Q$, and ``no" with probability $>1-\delta$ when $|P-Q|>\eps$. The challenge is to find a tester with the \emph{best} tradeoff between $\eps$, $\delta$, and $n$.

We introduce a new tester, efficiently computable and easy to use, which we hope will replace the chi-squared tester in practical use. Our tester is found via a new non-convex optimization framework that essentially seeks to ``find the tester whose Chernoff bounds on its performance are as good as possible". This tester is $1+o(1)$ optimal, in that the number of samples $n$ needed by the tester is within $1+o(1)$ factor of the samples needed by \emph{any} tester, even non-linear testers (for the setting: accuracy $\eps$, confidence $\delta$, and hypothesis $P$). We complement this algorithmic framework with matching lower bounds saying, essentially, that ``our tester is instance-optimal, even to $1+o(1)$ factors, to the degree that Chernoff bounds are tight". 
Our overall non-convex optimization framework 
extends well beyond the current problem and is of independent interest.
\end{abstract}



\section{Introduction}
In this paper we consider the statistical problem of ``hypothesis testing of distributions'', and provide a  principled yet practical new approach to optimal performance. The standard approach in practice is Pearson's chi-squared test (described below), invented in 1900, and remaining one of the most widely used tests in statistics, a bedrock of the field. Nonetheless, the chi-squared test was invented before the advent of principled methodical approaches to data-efficient statistics, and from this perspective looks a bit ad-hoc. Since 1900, many post-hoc justifications of the chi-squared test have been developed, as well as ``patches" to tailor its performance to certain situations. 

The most standard TCS model that captures many of the use cases of the chi-squared test is as follows: we start with a known \emph{hypothesis} distribution $P$ of support size $n$; we receive $k$ samples from an unknown distribution $Q$, and wish to return a ``yes" or ``no" answer distinguishing, as well as possible, whether our hypothesis $P$ is correct---namely $P=Q$---versus far from correct. Explicitly, given an $\ell_1$ distance bound $\eps$, and a failure probability $\delta$, we want to say ``yes" with probability $\geq 1-\delta$ when the samples are from $P$, and we want to say ``no" with probability $\geq 1-\delta$ when the samples are from \emph{any} distribution $Q$ such that $||P-Q||_1\geq\eps$. The goal is to find the algorithm with the best tradeoff between the number of samples $k$, the accuracy threshold $\eps$, and the confidence $1-\delta$, for any given hypothesis $P$.

There are thus 3 equivalent ways of viewing our problem: given $k$ data points and an allowed failure rate $\delta$, find the tester that optimizes the accuracy $\epsilon$; alternatively, given $k$ data points and a required accuracy $\epsilon$, find the tester with the smallest failure rate $\delta$; or, finally, given an allowed failure probability $\delta$ and a required accuracy $\epsilon$, find the minimum amount of data $k$ needed. Algorithms for any of these formulations can be transparently reparameterized/reformulated to solve the problem from the other perspectives (via a black-box binary search reduction).


We set out to solve this bedrock problem of statistics, in a way that is practical, robust, and extensible. 
\begin{figure}
    \centering
    \includegraphics[width=0.49\columnwidth]{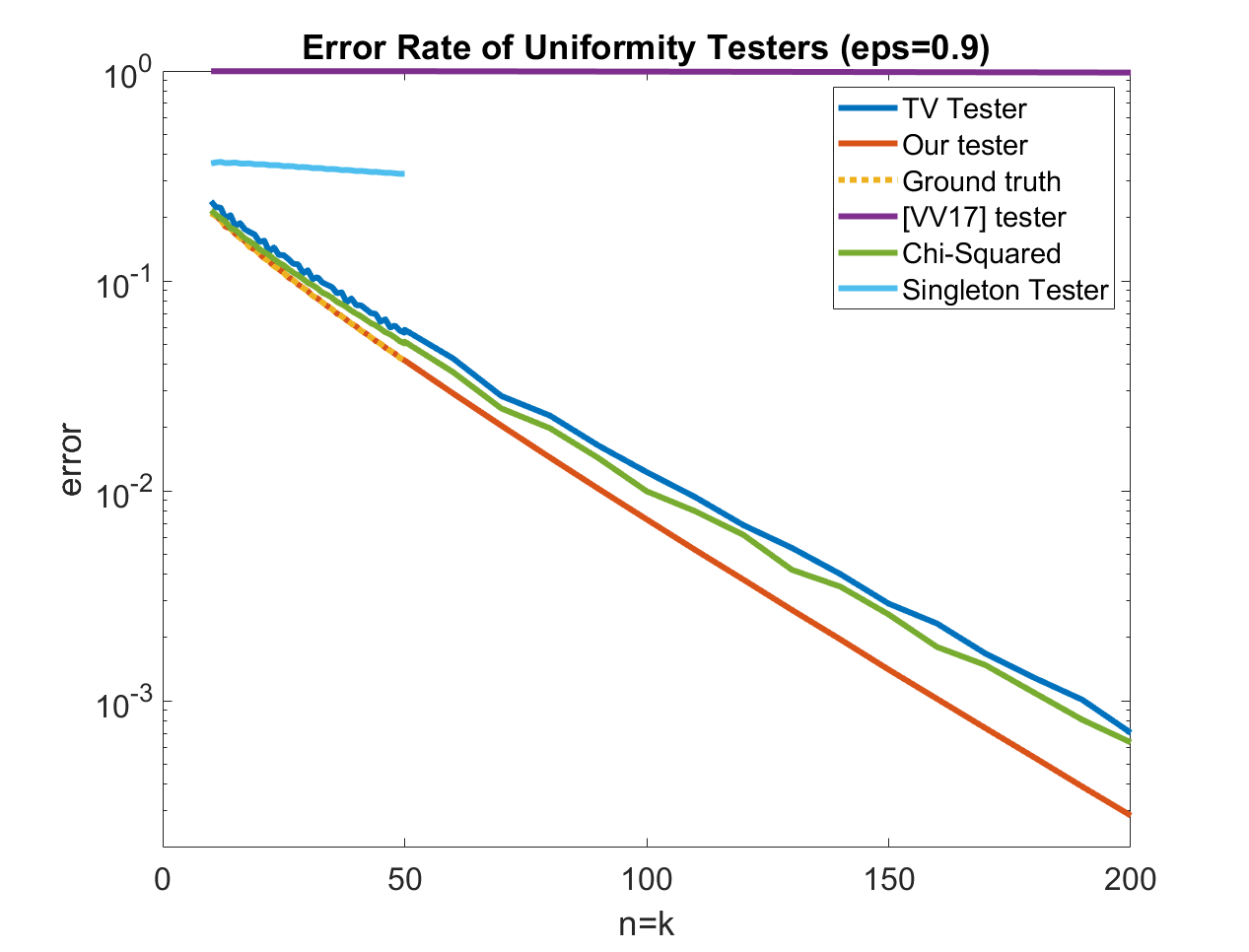} \includegraphics[width=0.49\columnwidth]{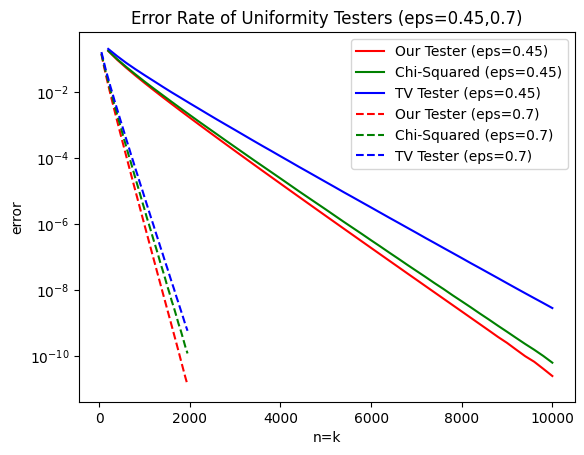} 
    \caption{\textit{Performance of Various Uniformity Testers. Plots show the result of numerical experiments comparing the performance of our new tester to 3 standard testers, that respectively threshold the chi-squared metric, the total variation distance, and the number of singleton elements in the data. The left plot shows $\eps=0.9$, with error $\delta$ (maximum of Type I and Type II errors) plotted as a function of $n=k$. The yellow dotted line, expressing a brute force computation of the ground truth optimal tester, is visually synonymous with our tester for the range of parameters for which we could compute it, $k=n\leq 50$. The right plot shows similar plots for $\eps=0.7$, and $\eps=0.45$, for larger ranges of $n=k$. In all cases in this paper, our tester was optimized and computed in $<1$s in Matlab. The choice of \emph{threshold} for each tester significantly affects its performance, and in all cases (except [VV17] which specifies its thresholds) we were as generous as possible, choosing the threshold of lowest error rate.}}
	\label{fig:uniform1}
\end{figure}

\medskip\noindent{\bf Our algorithm:} 
This paper presents a new algorithm for this problem, arising from a novel optimization framework. Our algorithm makes optimal use of its data, even up to $1+o(1)$ factors: phrased in terms of optimizing the failure probability in terms of a given number of samples $k$ and accuracy $\epsilon$, we show that the log failure rate $\log\frac{1}{\delta}$ of the tester returned by our algorithm converges to within $1+o(1)$ factor of optimal (see Theorem~\ref{thm:main}). This is not just an instance-optimal result, but a sub-constant-factor tight result. Complementing the theoretical analysis, simulations show that our tester outperforms all other testers in a variety of settings. Of particular interest is the simplest (and thus most widely studied) setting of the \emph{uniform} distribution hypothesis. Despite our tester being designed for far more general settings, we significantly outperform all other testers proposed for this iconic setting---most notably the total variation (TV) tester, the collisions tester (which is synonymous with the chi-squared tester in this special case), and the singletons tester. In particular, for small uniform distributions, we can derive, via brute force numerics, the ground-truth optimal tester; as seen in Figure \ref{fig:uniform1}, the performance of our algorithm is visually \emph{indistinguishable} from the optimal. The fact that our tester is visually indistinguishable from optimal on small inputs is particularly striking given that we analytically expect its performance to get ever more optimal for larger inputs.

While the chi-squared estimator---and many variants---perform fairly well on uniform distributions, the nonuniform case is far more challenging and exposes giant performance gulfs. See Figure \ref{fig:nonuniform}, showing a massive practical advantage of our tester over chi-squared. When $n=80$ and $k=40$, effectively no meaningful information can be learned using a chi-squared test, yet our test's error rate remains less than $10\%$.

\begin{figure}
    \centering
    \includegraphics[scale=0.5]{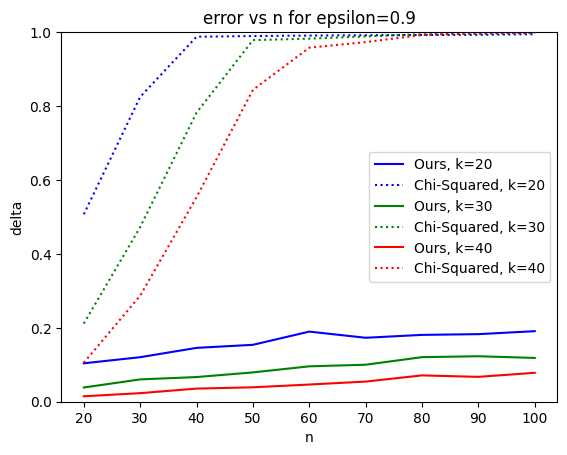}
    \caption{Performance of Testers on a Nonuniform Distribution with a Single Heavy Element. \emph{Plots show performance of testers on distributions composed of one element with weight $1 \over 2$ and $n$ elements each with weight $\frac{1}{2n}$, with $\eps = 0.9$ and $k$ ranging over $20$, $30$, or $40$.}}
	\label{fig:nonuniform}
\end{figure}

\medskip\noindent{\bf Our new algorithmic approach:} 
Our algorithmic approach is surprisingly direct: for the problem of hypothesis testing on distributions, we consider \emph{all} testers from a flexible and general class---called ``semilinear testers'', see Definition~\ref{def:semilinear}---and return the \emph{best} one, via an optimization setup. 

The direct approach is daunting, because describing the performance of an arbitrary algorithm---let alone optimizing it---is already challenging. For an algorithm to be effective, its error rate must be as small as possible, where by statistical convention, we say that a ``type I error" is when we accidentally reject the hypothesis $P=Q$ when it is true, and a ``type II error'' is when we accidentally accept the hypothesis when $|P-Q|\geq\eps$. The failure rate of our tester is the maximum failure over these two cases. Within the case of type II error, there is a third nested optimization problem, we we must necessarily consider the worst-case error over \emph{all} distributions $Q$ that are $\eps$-far from our hypothesis. Finally, even fixing a certain tester, a certain error-type, and a certain alternative distribution $Q$, we bound the failure rate via a Chernoff bound, which is a 4th level of optimization. In short, we cleanly capture the challenge of designing an optimal tester via a 4-level non-convex optimization setup, which on its surface is not encouraging.

The main point of the paper is that, despite appearances, our optimization problem is 1) algorithmically tractable, admitting effective code; 2) partially analytically tractable, showing that optimal testers have an intriguing ``$\log \cosh$" shape; 3) naturally provides matching lower bounds, showing that even though we only optimize over semilinear algorithms, we asymptotically achieve the best performance possible from \emph{any} algorithm. The generality of this new framework suggests that such an optimization approach might provide canonical optimal solutions for a much wider array of related problems that we have not begun to explore yet.

\medskip\noindent{\bf Semilinear testers:}
We now introduce the class of ``semilinear" estimators and testers. Semilinear estimators are simple yet flexible, expressing for each domain element, an arbitrary function of the number of times this element is sampled, while summing linearly across different domain elements. The goal of this paper is to find the ``right" tester from this class, instead of an ad-hoc tester from a larger class.

\begin{definition}\label{def:semilinear}
Given a sample space (domain) indexed by $j$, a semilinear estimator is represented by a table with coefficients $c_{i,j}$. Given several samples from a distribution on this domain, we represent the samples by their histogram, with $s_j$ counting the number of times element $j$ was sampled. The semilinear estimator $\mathbf{c}$, on sample $\mathbf{s}$, will return $\sum_j c_{s_j,j}$. Namely, for each domain element $j$, the number $s_j$ records the number of times this element occured in the sample, and we look up, in the $j^\textrm{th}$ column of our table $\mathbf{c}$, its $s_j^\textrm{th}$ entry, and add up all these $c_{s_j,j}$. A semilinear \emph{threshold} tester consists of a semilinear estimator and a threshold $\gamma$: it  returns ``yes" if the semilinear estimate is below the threshold $\gamma$, else ``no". Without loss of generality, we may set the threshold $\gamma$ to be 0.
\end{definition}

\begin{example}
The Pearson chi-squared test, for a given hypothesis $P$, and given $k$ samples from some distribution $Q$, represented as a histogram $\mathbf{s}$, computes the estimate \[\sum_j \frac{(s_j-k P_j)^2}{P_j}\] and returns ``yes" if it is below some threshold. Namely, it computes the squared difference between the number of times each domain element is seen, versus its expectation (if the hypothesis is true), normalized by its expectation. The Pearson chi-squared test is thus a semilinear tester.
\end{example}

As a related example (discussed more below), the uniformity tester of Gupta and Price~\cite{GP22} computes the ``Huber statistic" of $s_j-k P_j$ instead of simply squaring it, but since this function can be represented as a lookup table for each potential value of $s_j$, this statistic is also semilinear. By contrast, the instance-optimal tester of Valiant and Valiant~\cite{VV17} is not quite semilinear, because it essentially computes \emph{two} semilinear threshold tests and returns the OR of them.

\subsection{Main Results}
The main results are an algorithmic upper bound for hypothesis testing on distributions, and a lower bound that matches it, provided that ``Chernoff bounds are tight". While there is a long and celebrated history in statistics of analyzing ``reverse" Chernoff bounds, under the name ``the large deviations principle", these tools typically give insight only in the limit as the number of samples goes to infinity, without commenting on the rate of convergence; and without further foundational statistical work in this direction, our lower bounds are necessarily also of this flavor. See Section~\ref{sec:gartner-ellis} for more discussion.

Towards this end, we introduce the limit that we use for our lower bounds: we start with a fixed hypothesis distribution $P$ of support size $n$, and consider taking $\Poi(k)$ samples from it; in the limit, we essentially repeat this process $s$ times, for some positive integer $s\rightarrow\infty$. Technically, this corresponds to taking $\Poi(k s)$ samples from a version of $P$ that has each domain element subdivided into $s$ identical copies of itself: 
\begin{definition}
Given a distribution $P$ supported on $n$ elements, define $P^{sub(s)}$, read as ``$P$ subdivided $s$ times" to be the distribution supported on $n s$ elements, where each domain element of $P$ is subdivided into $s$ new domain elements of $P^{sub(s)}$, each of $\frac{1}{s}$ times the original probability mass.
\end{definition}

Our main results are about the optimum of the overall optimization problem, which we denote $\Delta(P,k,\eps)$:
\begin{definition}
Given a distribution $P$, a bound $\eps$ on the $\ell_1$ distance that we wish to test for, and a number of samples $k$, let $\Delta(P,k,\eps)$ be the optimal objective value of the nonconvex optimization problem of Equation~\ref{eq:relaxed}.
\end{definition}

Our main results---including both upper and lower bounds---are summarized in the following theorem.
\begin{theorem}\label{thm:main}
    Given a distribution $P$ of support size $n$, a bound $\eps$ on the $\ell_1$ distance that we wish to test for, and a number of samples $k$, then:
    \begin{itemize}
        \item In polynomial time, we can compute $\Delta(P,k,\eps)$, along with (a representation of)
        coefficients $c_{i,j}$ to a semilinear threshold tester such that, given $\Poi(k)$ samples from either $P$ or an arbitrary distribution $Q$ with $\ell_1$ distance $\geq\eps$ from $P$, the tester distinguishes these cases except with error probability $\leq e^{\Delta(P,k,\eps)}$. 
        \item For positive integer $s$, on the problem of testing the hypothesis $P^{sub(s)}$ with $\Poi(k\cdot s)$ samples, the same tester as above, with coefficients $c_{i,j}$, will have error probability $\leq e^{s\cdot \Delta(P,k,\eps)}$. 
        \item By contrast, for {\bf any} sequence of testers $T_s$ indexed by $s$---including arbitrary nonlinear testers---letting $\delta_s$ denote the failure probability of tester $s$ for testing the hypothesis $P^{sub(s)}$ with $\Poi(k\cdot s)$ samples to accuracy $\eps$, we have that \[\liminf_{s\rightarrow\infty} \frac{1}{s} \log\delta_s\geq \Delta(P,k,\eps)\]
    \end{itemize}
\end{theorem}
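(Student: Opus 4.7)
\emph{First bullet (algorithmic upper bound).} The plan is to solve the saddle-point problem in Equation~\ref{eq:relaxed} directly, by peeling off the four nested layers. For any fixed candidate $\mathbf{c}$, the type-I branch is a one-dimensional smooth convex minimization over $t'\geq 0$ of a sum of log-MGFs of Poisson random variables, and admits Newton-style computation. The type-II branch maximizes over $\histq$ subject to a probability constraint and the $\ell_1$ separation $\sum_{j,x}\histq|x-y_j|\geq\eps$, then minimizes over $t\leq 0$; after Lagrange-relaxing the $\ell_1$ constraint, the resulting $(\histq,t)$ problem is bilinear in $\histq$ (since $\log\sum_i e^{t c_{i,j}}\poii{kx}$ is linear in $\histq$), letting us swap the order of the two inner operations. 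Having tamed the inner min-max, I would exploit the $\log\cosh$ structural characterization promised in the introduction---the optimal $c_{\cdot,j}$ for each $j$ is a $\log\cosh$ of a linear function determined by $O(1)$ parameters---to collapse the outer minimization over $\mathbf{c}$ from $O(nk)$ variables to $O(n)$, after which a projected-gradient / interior-point method on the outer problem returns $\Delta(P,k,\eps)$ in polynomial time. The corresponding Chernoff bound on the tester's error is, by construction, $e^{\Delta(P,k,\eps)}$.

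\emph{Second bullet (scaling to $P^{sub(s)}$).} This should reduce to bookkeeping with independent Poissons. Under $\Poi(ks)$ samples from $P^{sub(s)}$, each of the $s$ subdivided copies of an original element $j$ with hypothesis probability $P_j$ is independently hit $\Poi(kP_j)$ times. Re-using the coefficients $c_{i,j}$ across all $s$ copies of the $j$th class, the tester's statistic decomposes as a sum of $ns$ independent random variables, and for any Chernoff parameter $t$ the log-MGF splits as exactly $s$ copies of the per-class log-MGF used for the original $P$. Hence the Chernoff exponent scales by $s$. I would then argue that the worst-case $Q^{sub(s)}$ in the inner maximization can be taken symmetric across the $s$ subdivisions of each equivalence class (because the objective is concave in $\histq$ and the $\ell_1$ constraint is symmetric under permutation of copies), so the value of the nonconvex program for $P^{sub(s)}$ is exactly $s\cdot\Delta(P,k,\eps)$.

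\emph{Third bullet (matching lower bound).} Starting from the worst-case $Q$ produced by the upper-bound optimization, I would build a randomized adversary $\mathcal{Q}$: a distribution over distributions, obtained by randomly permuting $Q$'s per-element probabilities within each equivalence class, so that under Poissonization each coordinate remains independent and marginally has the correct Chernoff profile. By the Neyman-Pearson lemma, the optimal test to distinguish $P^{sub(s)}$ from $\mathcal{Q}^{sub(s)}$ is the log-likelihood-ratio test, which in this independent Poisson product model decomposes additively across elements and is itself a semilinear statistic; so any tester $T_s$---semilinear or not---does at least as badly on $\mathcal{Q}^{sub(s)}$ as Neyman-Pearson does. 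Because the log-likelihood ratio is a sum of $ns$ i.i.d.\ contributions, the G\"artner--Ellis / Cram\'er large-deviations principle identifies its asymptotic error exponent with the Legendre transform of a log-MGF, and primal-dual optimality at the saddle point of Equation~\ref{eq:relaxed} equates this exponent with $\Delta(P,k,\eps)$; taking liminf yields the claim.

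\emph{Main obstacle.} The hardest step is the last one, specifically reconciling two different notions of ``worst case'': the $Q$ found by the primal minimizes the Chernoff \emph{bound} on the tester's failure, not the failure itself. Bridging this requires (i)~engineering $\mathcal{Q}$ so its Neyman-Pearson error rate matches the saddle value and not merely a loose upper bound, (ii)~verifying the essential-smoothness/steepness hypotheses of G\"artner--Ellis at the relevant optimal Chernoff parameter $t^\ast$, and (iii)~handling the fact that realizations of $\mathcal{Q}$ are only $\ell_1$-far from $P^{sub(s)}$ with high probability (not surely), which forces a union-bound correction that must be shown to be $o(s)$ in the exponent. This matching of primal and dual optima, together with extracting a tight large-deviations rate from it, is where the bulk of the work lives, and is precisely why the lower bound is stated asymptotically in $s$ rather than as a finite-sample inequality.
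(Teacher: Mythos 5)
Your overall architecture---reduce the saddle point to a low-dimensional dual, re-use the same coefficients on $P^{sub(s)}$ via the product structure of independent Poissons, and lower-bound via Neyman--Pearson against a randomized adversary analyzed by large deviations---matches the paper's. But the third bullet, which you correctly identify as the crux, contains a construction that does not work as stated. You propose building $\mathcal{Q}$ by \emph{randomly permuting} the worst-case $Q$'s probabilities within each equivalence class ``so that under Poissonization each coordinate remains independent.'' A random permutation fixes the exact multiset of probabilities (so every realization is exactly $\eps$-far, and your obstacle~(iii) would not arise), but it destroys independence across coordinates: knowing that one element received probability $x_{1,j}$ changes the conditional law of the others. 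The paper instead flips an \emph{independent} $q_j$-weighted coin for each domain element between $x_{1,j}$ and $x_{2,j}$, and then \emph{conditions} on the realized $\ell_1$ distance lying in $[\eps,\eps']$; independence is what makes $\mathcal{Q}(h)$ a product over $j$, and the conditioning is what forces the distance guarantee. These two constructions are not interchangeable, and your proposal needs to commit to the second one.

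Once you do, your claim that ``the log-likelihood ratio is a sum of $ns$ i.i.d.\ contributions'' fails for $\mathcal{Q}_{[\eps,\eps']}$, because the conditioning couples the coordinates, so Cram\'er/G\"artner--Ellis cannot be applied directly to the LLR. The paper's workaround is a two-layer Chernoff argument: it lower-bounds $\sum_h P(h)^u\mathcal{Q}_{[\eps,\eps']}(h)^{1-u}$ using $\mathcal{Q}(h,[\eps,\eps'])\le\mathcal{Q}(h)$ to recover a product form, which reexpresses the bound as the probability that a sum of independent two-valued variables $X_j$ lands in $[\eps,\eps']$; it then Chernoff-bounds that event (and the normalizer $\mathcal{Q}([\eps,\eps'])$ with parameter $\alpha$), collecting the slack into factors $ch_1$, $ch_2$, $e^{\alpha(\eps'-\eps)}$ that G\"artner--Ellis shows are $e^{o(s)}$. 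The step you summarize as ``primal-dual optimality at the saddle point equates this exponent with $\Delta(P,k,\eps)$'' is where the real work lives: one must verify that the resulting two-parameter Chernoff expression in $(s,u)$ is minimized at $s=0$ and at the upper bound's $u$, which the paper proves (Lemma~\ref{lem:derivative-analysis}) by combining the vanishing $q_j$- and $\alpha$-derivatives of the optimized upper bound (Lemma~\ref{lem:upper-final}) with joint log-convexity. Without that computation the lower bound only matches some Chernoff bound, not the specific value $\Delta(P,k,\eps)$. Your listed obstacles (i)--(iii) show you see where the difficulty is, but the proposal as written does not yet supply the mechanism that resolves it.
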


\subsection{Perspective}
\subsubsection{The choice of $\ell_1$ metric}
We briefly comment on the choice of the $\ell_1$ metric in defining the hypothesis testing problem: our tester is asked to reject the hypothesis $P$ on any distribution $Q$ such that $|P-Q|_1\geq\eps$. However many other distance metrics are possible, including the KL-divergence, Hellinger distance, or chi-squared divergence---all of which show up in related statistical contexts. We choose $\ell_1$ for this paper both because it is standard for work in the statistical property testing field for related estimation tasks. Further, and importantly, since $\ell_1$ distance is equivalent to total variation distance, it exactly captures how distinguishable a single sample from $P$ versus $Q$ is, thus making it a natural benchmark.

However, the optimization framework introduced in this paper is far more general than just the $\ell_1$ distance distribution hypothesis testing framework we work with here. Many aspects of our analysis extend unchanged to other metrics, and understanding the implications of this would presumably yield interesting follow-up papers. In short, we choose the $\ell_1$ metric but the approach we introduce here is not in any way wed to it.

\subsubsection{Comparison with~\cite{VV17}}
Our main result is a bit unusual in comparison with standard results about testing properties of distributions. Compare, for example, with the main theorem of~\cite{VV17} which describes a precise formula for the number of samples needed to run accurate hypothesis testing as a function of the distribution $P$, and the accuracy $\eps$ (that paper assumes $\delta=\Theta(1)$\,): the main result there characterizes the number of samples in terms of the $\ell_{\frac{2}{3}}$ norm of the hypothesis $P$. By contrast, our optimization framework is a rather opaque expression compared to the $\ell_{\frac{2}{3}}$ norm result of~\cite{VV17}.

We view our current approach as furthering and complementing the goals of~\cite{VV17}, for the following reasons. First, our results here are tighter, both in theory, and in practice. While~\cite{VV17} obtained performance that was tight to constant factors, the results here are $1+o(1)$ tight, and in simulations (see figures in Section 1) are essentially identical to performance of the ground truth optimal algorithm in situations where the ground truth can be numerically ascertained. Constant factors are often critical in data-efficiency contexts, and a focus of increasing recent interest. Second, while the current paper does not provide a clean formula for the performance of our new algorithm---see Equation~\ref{eq:relaxed} and below, which presumably have no closed-form solution---the formula from~\cite{VV17} is already shown to be constant-factor tight by that paper, so may be used here as a closed-form estimate of the performance of our algorithm as well. The fact that our optimization framework does not appear to have a closed-form solution is not a fault of our paper but a fact of nature: this expression is optimal yet this expression is not simple. This paper provides access to optimality via an algorithm that efficiently finds the testing coefficients and the objective value; and this seems like the best that can be hoped for.

\subsubsection{Comparison to~\cite{GP22}}
This recent paper focuses on the distribution hypothesis testing problem, restricted to the case of \emph{uniform} distributions. The paper shares our focus on $1+o(1)$-tight analysis. The main results are identifying an asymptotic regime where Pearson's chi-squared test performs optimally, and then modifying the chi-squared test---in a semilinear way, to instead compute the Huber function on each domain element---leading to $1+o(1)$-optimal performance across a fairly wide range of parameter space.

The main distinction between our paper and~\cite{GP22} is that we consider generic distributions instead of just uniform distributions. It is not at all clear how one could optimally combine uniformity testers to test more general distributions, although our optimization framework in some sense can be viewed as the prescription for this. We briefly point out that non-uniform distributions are (of course) more common than uniform distributions, so uniformity testing can be considered a special case, but whose nice mathematical structure does not always reflect real life. We briefly mention 3 examples where non-uniform hypotheses might arise. Pearson's chi-squared test is often used to compare two empirically accessed distributions, and in many cases, one of these distributions may be much more expensive to sample than the other; and in this case, one can call the inexpensive distribution the ``known" hypothesis $P$ as it is so cheap to obtain farther samples. Beyond empirically obtained distributions, there are also many cases in science where prior knowledge leads one to hypothesize an explicit non-uniform distribution. For example, back in 1928, R.A. Fisher investigated Mendelian genetics and its prediction of non-uniform allele frequencies~\cite{Fisher1928}. A much more modern example is the question of how to test whether a quantum computer is accurately sampling from the distribution of outcomes expected by the laws of quantum mechanics, and distinguishing this from the case that the quantum computer is sampling from an erroneous distribution~\cite{wang2021qdiff}.

As further comparison between our paper and results of~\cite{GP22}, see the figures in Section 1 which compare the performance of our tester with several analyzed in~\cite{GP22}. Because of the asymptotic setup of~\cite{GP22}, the parameters of the ``Huber tester" they recommend are never specified---since the specifics do not matter in the limit considered in that paper. But we are thus unable to compare our algorithm to any specific proposal from~\cite{GP22}.

At a higher level, the Huber function arises in~\cite{GP22} out of the dual requirements that the function is quadratic near the origin and linear away from it. But the Huber function is just one choice of many possible functions that fit these criteria equally well. Our paper shows that in fact the $\log\cosh$ function is the correct (i.e., optimal) form in general.

Finally, we provide perspective on two of the quirks of our analysis.

\subsubsection{Depoissonization}
While our main theorem is phrased in terms of a Poisson-distributed number of samples $\Poi(k)$ from a distribution, the more standard setting is to take some fixed number $k$ of samples. As discussed above, we move to the ``Poissonized" setting because here, the number of times that each domain element $j$ is sampled becomes completely independent of the other domain elements, which is crucial for both the upper and lower bound analysis. However, the natural conclusion of a ``Poissonized" analysis is ``depoissonization"---showing how to move these results back to the original framework, with exactly $k$ samples. Handling this in full would substantially extend the length of this paper, so we instead just outline the method here.

We could simply take the tester $\mathbf{c}$ produced by our optimization and use it with exactly $k$ samples, and its error probability would increase by at most a factor inversely proportional to the probability that $\Poi(k)$ equals $k$. However, this would not be expected to match any corresponding lower bounds of the flavor of Theorem~\ref{thm:main}.

Instead, we adapt Equation~\ref{eq:relaxed} while preserving as much of its meaning as possible. Each of the two  components of the outer max of Equation~\ref{eq:relaxed}---representing type 1 and type 2 error--- represents a Chernoff bound on the failure probability, and thus the exponential of either of these can be considered as an expectation over the process of taking $\Poi(k)$ samples from a distribution that is either $P$ or $Q$. We consider ``tilting" this expectation by multiplying the terms of the objective function by some exponential $\alpha^i$ so that the maximum contribution to either of the two terms comes from cases where exactly $k$ samples are taken, instead of some other value in the support of $\Poi(k)$. In this sense, we essentially add another constraint to the optimization, that we are looking for a solution ``where $\Poi(k)$ mimics the deterministic value $k$". After some simplification, it turns out that this new constraint can be equivalently reexpressed as a new constraint on the inner max, stipulating that the ``worst case $Q$" that we construct (described via histogram variable $\histq$) must have the same total probability mass (namely, 1) as $P$. For technical reasons, such a constraint must be left \emph{out} of the original Equation~\ref{eq:relaxed} in order for Theorem~\ref{thm:main} to hold, but to produce the best tester for the deterministic $k$-sample regime, we add this extremely natural constraint to Equation~\ref{eq:relaxed}. We point out that adding a constraint to this maximization can only \emph{decrease} the overall objective value; thus we expect the $c_{i,j}$ customized to the deterministic case to outperform the Poissonized $c_{i,j}$, even when the Poissonized tester is evaluated on $\Poi(k)$ samples. While we do not include the details, we expect \emph{all} parts of Theorem~\ref{thm:main} to extend to this ``depoissonized" tester, including that, for fixed $j$, the coefficients $c_{i,j}$ will have the same log cosh functional form as before. The main difference will be that the additional constraint will induce a new dual variable---which we may call $\beta$. Thus when we express the overall optimization problem as a 2-level optimization, as in Equation~\ref{eq:merged-split}, there will be a third variable $\beta$ in the outermost optimization, joining $\alpha$ and $u$.

In short, the technical machinery of this paper extends naturally to ``depoissonizing" the estimator of Theorem~\ref{thm:main}, with very little change or computational overhead.

\subsubsection{``Chernoff bounds are tight"}

The approach of this paper is predicated on the intuition that finding the algorithm with optimal Chernoff bound for the failure probability in our setting should be viewed as morally the same task as finding the optimal algorithm; this, combined with the versatility of manipulating Chernoff bounds enables the approach of our paper.

In statistics, the intuition that ``Chernoff bounds are tight" is called the ``large deviations principle". We briefly comment on 3 old and extremely respected results establishing this intuition.

The first is Cram\'{e}r's theorem~\cite{Cramer38}, which considers the common case for Chernoff bounds where we wish to derive a tail bound on the mean of a large number of i.i.d. copies of a real-valued distribution $X$. Namely, for some threshold $\gamma$, we wish to derive a bound on $\Pr_{x_1,\ldots,x_n\leftarrow X}[\frac{1}{n}\sum_i x_i\geq\gamma]$. The standard Chernoff bound on this (provided $\gamma\geq\mu(X)$\,) is $\min_{t\geq 0} \Exp_{x\leftarrow X}[e^{t(x-\gamma)}]^n$. Taking logarithms and normalizing by $\frac{1}{n}$, we have that the best Chernoff bound does not change with $n$. In the limit as $n\rightarrow\infty$, how good is this Chernoff bound? Cram\'{e}r's theorem actually says that the limit of $\frac{1}{n}$ times the log of the tail probability \emph{exactly equals} the corresponding expression from the Chernoff (upper) bound. This theorem is extremely general, requiring almost no assumptions on the distribution $X$ except that the relevant quantities in the equality claim are well defined. The cost of this generality is that the convergence rate is hard to control.

While Cram\'{e}r's theorem shows convergence in the \emph{exponent} of the tail probability, a much more precise result is the Bahadur-Rao theorem~\cite{BR60}, which actually shows multiplicative convergence in the probability itself, and not just in the exponent. The full formulation is complicated, but it essentially says that the true tail bound is not only better than the optimal Chernoff bound, but in fact \emph{quasilinear} in the optimal Chernoff bound. Intuitively, suppose we are trying to decide between two probabilistic processes $A,B$, and we have Chernoff bounds on their failure probabilities (in some sense) denoted respectively $a,b$; then even in regimes where the Chernoff bounds $a,b$ are \emph{not} accurate enough, since both $a,b$ are quasilinear in their true failure probabilities, we can correctly choose between $A,B$ by choosing the better of $a,b$, trusting on quasilinearity to preserve the \emph{ordering} of the failure probabilities, even when ``distorted" by Chernoff bounds. In short, ``because Chernoff bounds are quasilinear in the true tail bounds, \emph{optimizing} over Chernoff bounds is as good as optimizing over the true tail probabilities".

Finally, there is a long history of generalizing Cram\'{e}r's theorem to settings well beyond means of i.i.d.~variables. One of the most natural and general variants is the G\"{a}rtner-Ellis theorem---see Section~\ref{sec:gartner-ellis}. While Cram\'{e}r's theorem talks about the convergence of tail bounds on a random variable $Z_n$ equal to the mean of $n$ i.i.d. copies of a distribution, G\"{a}rtner-Ellis by contrast says that, for almost any sequence of random variables $Z_n$, we can ``pretend, in the style of Cram\'{e}r's theorem, that $Z_n$ is the mean of $n$ i.i.d. copies of a distribution", and so long as the moment generating function of $Z_n$ converges to something differentiable as $n\rightarrow\infty$, we can conclude convergence analogously to Cram\'{e}r's theorem.

In short, we leverage a long history of tools and intuition that ``Chernoff bounds are tight", and we hope this paper exposes the benefits of these intuitions to a wider audience.

\subsection{Related work}

We direct the reader to several notable surveys about distribution testing~\cite{gs009} and property testing~\cite{Dana08,goldreich_2017}, along with the paper \cite{BFFKRW01} that initialized much of the work in this area.

A historically and practically important special case of our problem of hypothesis testing on distributions is the problem of \emph{uniformity testing}, where the hypothesis is a uniform distribution. There is a long line of work on this topic, including relatively recent papers~\cite{paninski08, DGPP18, GP22}. We highlight this last paper which gets $1+o(1)$-tight bounds on the sample complexity of uniformity testing in various natural asymptotic regimes. This paper suggests modifying the chi-squared statistic into the Huber statistic, which is a function combining quadratic and linear regions. This paper demonstrates the value of looking at a richer class of functional forms for testers, and emphasizes the importance of $1+o(1)$-tight analysis when proposing new statistical testers.

The paper~\cite{VV17} focuses on a very similar setting to the one of the current paper, of hypothesis testing on distributions.. Its main feature is ``instance-optimal" results, that, up to some constant factor, perform as well as is possible for the particular hypothesis distribution $P$. This work revealed that the difficulty of testing $P$ depends on $||P||_{\frac{2}{3}}$, the $\frac{2}{3}$ norm of the distribution, an interesting structural insight. The instance-optimal analysis, however, is at the expense of some loss of constant factors, and it is unclear how well these algorithms would perform in practice.

Some classic results in the area, including parts of \cite{VV17}, were extended to the ``tolerant testing" regime in~\cite{acharya2015optimal}. A tolerant tester is not only required to accept distributions that satisfy the given property, but also tolerate some corruption/error without rejecting. The paper~\cite{acharya2015optimal} showed several tolerant testing constructions based around a tester that rejects distributions with $\ell_1$ distance $\geq\eps$, while accepting distributions with chi-squared distance $\leq \eps'$. This result shows how, in some cases, tolerant testing can be achieved without paying a high cost in the other parameters of the problem.


\cite{Goldreich20} shows a general relation between hypothesis testing on distribution and the simpler uniformity testing problem, with a general constant-factor tight reduction.

The paper~\cite{DGKPP21} encompases several previously studied regimes of these problems, examining how the sample complexity of testing the independence and closeness of distributions depends on the desired failure probability $\delta$. Many previous works took $\delta$ to be a constant---implicitly dealing with other $\delta$ with the black-box amplification process of taking a majority vote over independent repetitions of the tester on fresh data; by contrast, this paper examines when it is possible to combine the data in more subtle ways to get better performance as a function of $\delta$.

Many variants of these problems exist. In the ``closeness testing'' problem we are given sample access to \emph{two} distributions, $P,Q$ (instead of being given $P$ explicitly) and the aim is to distinguish when $P=Q$ from when $||P-Q||_1\geq\eps$~\cite{BFRSW00, ADJOP11, BFRSW13, CDVV14}.


Generalizing in a different direction, the Generalized Uniformity Testing model seeks to test uniformity without knowing the support of the distribution in advance~\cite{BC17, DKS18}.

There are also several works~\cite{CFGM16, CRS15, ACK14} on distribution testing on conditional samples. In this model, the testing algorithm has a kind of ``query access" to the distribution, in that the tester can name a subset as an input, and then receives samples \emph{conditioned} on lying in the named subset. By choosing the subset properly, they can obtain much stronger results, achieving constant sample complexity rather than $O(\sqrt{n})$.

There are also many works~\cite{DKN15, DDSVV13, CDSS14} focusing on the identity testing problem for \emph{structured distributions}, where the distribution is guaranteed to have certain global structural properties, like being monotonic, or $k$-modal. In many cases, this structure can be leveraged to yield dramatically (and sometimes exponentially) better performance.

Recent attention on quantum computing has led to several papers posing variants of the property testing problem under the quantum setting~\cite{OW15, BOW19, BCL20}. The goal is to seek an algorithms that can distinguish whether the mixed state has some given property or is $\epsilon$-far away from any mixed state possessing the property, with some constant confidence. Analogously to the classical case, the optimality of the algorithm is measured by ``copy complexity'', asking how many copies of the same mixed state are required to conduct the test. This problem is closely related to identity testing or property testing problem in the classical setting, as every mixed state can be seen as a probability distribution on the support of pure states, and by taking a measurement on one mixed state, we can get a sample from that distribution; this builds a direct relationship between the copy complexity and the classical sample complexity.

\section{Preliminaries}\label{sec:prelim}
\subsection{Notation}
Globally, we use $P$ to denote our hypothesis distribution, and we use $j$ to index the \emph{unique} probabilities of $P$---so that we can collectively deal with all identical domain elements at once; let $y_j$ denote the probability of each of the domain elements indexed by $j$, and let $\histp$ denote the number of such domain elements. While each element of $P$ that has probability $y_j$ is symmetric, and should be treated symmetrically by our tester, different $j$ should be treated differently. Thus when we consider an alternative probability distribution $Q$, we separately represent a histogram for each equivalence class $j$ of our hypothesis distribution $P$. Namely, let $\histq$ be the histogram of $Q$ on the domain elements that have probability exactly $y_j$ on $P$. When describing a Chernoff bound, we often use the notation ``$\chernoff$" to emphasize that the inequality comes from a Chernoff bound.

\subsection{The optimization problem}

In this section we describe and derive the overall optimization problem of finding the semilinear testing coefficients $\mathbf{c}$ that have the best possible Chernoff bounds on their performance.

Consider taking $\Poi(k)$ samples from some distribution $Q$, and then, finding the best Chernoff bound on the probability that the test will mistakenly say ``yes". We let $F_{i,j}$ denote the \emph{number} of different domain elements within the equivalence class $j$ that have been seen exactly $i$ times in the sample ($F$ for ``fingerprint'', as in~\cite{BFRSW00}). Thus the semilinear tester will compute the quantity $\sum_{i,j} c_{i,j}F_{i,j}$ and return ``yes" if this is $<0$. Thus by standard Chernoff bounds we have

\begin{align*}
    \Pr_{F\leftarrow Q^{\Poi(k)}}[\sum_{i, j} c_{i, j} F_{i, j} < 0] &\chernoff \inf_{t \le 0} \prod_{j, x} \left(\sum_i e^{t c_{i, j}} \poii{kx}\right)^{\histq} \\
        &= \inf_{t \le 0} \exp\left(\sum_{j, x} \histq \log \sum_i e^{t c_{i, j}} \poii{kx}\right),\\
\end{align*}
and conversely, when we draw samples from the hypothesis distribution $P$, we compute the best Chernoff bound on the probability that the tester mistakenly computes a statistic $\geq 0$ and thus outputs ``no":
\begin{align*}
    \Pr_{F\leftarrow P^{\Poi(k)}}[\sum_{i, j} c_{i, j} F_{i, j} \ge 0] &\chernoff \inf_{t' \ge 0} \prod_{j} \left(\sum_i e^{t' c_{i, j}} \poii{ky_j}\right)^{\histp} \\
        &= \inf_{t' \ge 0} \exp\left(\sum_{j} \histp \log \sum_i e^{t' c_{i, j}} \poii{ky_j}\right).\\
\end{align*}

Therefore, the following is an upper bound of the log of the error probability $\delta$ obtained by semilinear testers:
\begin{align*}  
    \hspace{-2cm}\min_{\mathbf{c}} \max_{Q : |Q - P|_1 \ge \eps} \log err(Q, \mathbf{c}) &\chernoff  \min_{\mathbf{c}} \max_{Q : |Q - P|_1 \ge \eps} \max\left\{\inf_{t \le 0} \sum_{j, x} \histq \log \sum_i e^{t c_{i, j}} \poii{kx},  \inf_{t' \ge 0} \sum_{j} \histp \log \sum_i e^{t c_{i, j}} \poii{ky_j}\right\}
\end{align*}

The constraint that $Q$ is a distribution such that $|Q-P|_1\geq\eps$ is informally stated, and must be rephrased in terms of the explicit optimization variables $\histq$. We have 3 constraints on the histogram $\histq$: the $\geq\eps$ distance constraint, the fact that for each $j$, the total number of domain elements described by $\histq$ for that $j$ equals the corresponding quantity $\histp$, and the constraint that $\histq\geq 0$; for the sake of efficient optimization, we must \emph{relax} the problem by omitting any \emph{integrality} constraint. Namely, even though $\histq$ represents the \emph{number} of domain elements satisfying certain conditions, we do not enforce the restriction that it must have an integer value. (The question of whether this relaxation is ``safe" is resolved, eventually, by the matching upper bound of Theorem~\ref{thm:main}.) We thus have our main expression upper-bounding the overall failure probability of our testing problem:
\begin{align}
\label{eq:relaxed}
    \min_{\mathbf{c}} \max\left\{\max_{\substack{\hist{q|y_j}{}:\sum_j \sum_x \histq |x-y_j|\geq\eps\\
    \forall j,\; \sum_x \histq=\histp\\
    \forall j,\forall x,\; \histq\geq 0}}
    \min_{t\leq 0} \sum_{x,j} \histq \log \sum_i e^{t c_{i,j}} \poii{kx}\;,\;\min_{t'\geq 0}\sum_j \histp \log \sum_i e^{t' c_{i,j}} \poii{ky_j}\right\}
\end{align}



\section{Upper bound}
\label{sec:upper}

As a reminder, Equation~\ref{eq:relaxed} describes the best error bound of any semilinear tester, and we have denoted this quantity as $\Delta(P, k, \eps)$. In this section, we analyze the structure of the coefficients $c_{i, j}$ corresponding to this best error bound, as well as showing other desirable properties of the optimal variable values of Equation~\ref{eq:relaxed}. Later, in Section~\ref{sec:lower}, we show how to use the optimum value of $\histq$ to construct a distribution-over-distributions that will form the basis of the lower bound analysis, thus yielding the main theorem.

For the sake of applying Sion's minimax theorem soon, we first \emph{lower bound} Equation~\ref{eq:relaxed}, by restricting the domain of the inner max to ``$=\eps$" instead of ``$\geq\eps$". Later on, with Lemma~\ref{lem:relaxed-bound-tight}, we actually show that this lower bound is tight, so we incur no loss here.

\begin{align}
\label{eq:relaxed-bound}
    \min_{\mathbf{c}} \max\left\{\max_{\substack{\hist{q|y_j}{}:\sum_j \sum_x \histq |x-y_j|=\eps\\
    \forall j,\; \sum_x \histq=\histp\\
    \forall j,\forall x,\; \histq\geq 0}}
    \min_{t\leq 0} \sum_{x,j} \histq \log \sum_i e^{t c_{i,j}} \poii{k x}\;,\;\min_{t'\geq 0}\sum_j \histp \log \sum_i e^{t' c_{i,j}} \poii{k y_j}\right\}
\end{align}

We aim to simplify this optimization problem by reducing the number of nested layers of min and max; and towards this end, we aim to swap the order of the max and the min of the first term. We will utilize Sion's minimax theorem to achieve this. We first state the theorem for clarity.

\begin{lemma}[Sion's minimax theorem; \cite{Sion58}]
\label{lem:Sion}
    Let $X$ and $Y$ be convex spaces, one of which is compact. Let $f : X \times Y \to \mathbb{R}$ be a function such that for all $x \in X$, $y \in Y$,
    \begin{itemize}
        \item $f(x, \cdot)$ is upper semi-continuous and quasi-concave on $Y$,
        \item $f(\cdot, y)$ is lower semi-continuous and quasi-convex on $X$,
    \end{itemize}
    then 
    \[\sup_{y} \inf_{x} f(x, y) = \inf_{x} \sup_{y} f(x, y).\]
\end{lemma}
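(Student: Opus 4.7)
The plan is to prove the two inequalities separately. The easy direction, $\sup_y \inf_x f(x,y) \leq \inf_x \sup_y f(x,y)$, is standard weak duality: for any fixed $x_0, y_0$, one has $\inf_x f(x, y_0) \leq f(x_0, y_0) \leq \sup_y f(x_0, y)$, and taking $\sup$ over $y_0$ on the left and $\inf$ over $x_0$ on the right preserves the inequality. So I would focus entirely on the reverse direction.

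For the hard direction I would argue by contradiction. Assume WLOG that $Y$ is compact, and suppose there exists $\lambda$ with
\[
\sup_y \inf_x f(x,y) \;<\; \lambda \;<\; \inf_x \sup_y f(x,y).
\]
For each $y \in Y$ define $A_y = \{x \in X : f(x,y) \leq \lambda\}$, and for each $x \in X$ define $B_x = \{y \in Y : f(x,y) \geq \lambda\}$. By the lower semi-continuity and quasi-convexity of $f(\cdot, y)$, each $A_y$ is closed and convex; by the upper semi-continuity and quasi-concavity of $f(x, \cdot)$, each $B_x$ is closed and convex. The right inequality says $\bigcap_y A_y = \emptyset$ (else that point would witness $\sup_y f(x,y) \leq \lambda$), and the left inequality says $\bigcap_x B_x = \emptyset$ (else that point would witness $\inf_x f(x,y) \geq \lambda$).

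Next I would reduce to a finite situation. Because $Y$ is compact and the $B_x$ are closed, $\bigcap_x B_x = \emptyset$ yields a finite sub-family $B_{x_1}, \dots, B_{x_m}$ with empty intersection. The core step is then to replace $X$ by the convex hull $X' = \mathrm{conv}(x_1, \dots, x_m)$, a finite-dimensional compact convex set on which $f(\cdot, y)$ remains quasi-convex and lower semi-continuous; on $X'$ one still has $\bigcap_y (A_y \cap X') = \emptyset$, so compactness again produces a finite $A_{y_1}, \dots, A_{y_n}$ whose intersection with $X'$ is empty. At this point the problem has been reduced to finitely many convex closed sets in finite-dimensional convex bodies, and one derives a contradiction using the KKM (Knaster-Kuratowski-Mazurkiewicz) lemma: the cover of $X'$ by the complements of $A_{y_i}$ (relative to $X'$), together with the cover of $\mathrm{conv}(y_1,\dots,y_n)$ by the complements of $B_{x_j}$, can be combined to produce, via a KKM-style fixed point argument on the simplex, a pair $(x^*, y^*)$ in the respective convex hulls satisfying $f(x^*, y^*) < \lambda$ and $f(x^*, y^*) > \lambda$ simultaneously.

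The main obstacle is this last finite-dimensional step: honest execution of the KKM argument requires care because we only have quasi-convexity/concavity (not convexity/concavity), so one cannot appeal to separating hyperplanes or linear duality, and one must thread convex combinations through both $X'$ and the simplex on the $y$-side symmetrically. An alternative I would consider, to keep the write-up elementary, is Komiya's approach (1988), which proves the theorem by induction on the number of points in a finite sub-cover, using only the one-dimensional (interval vs.\ interval) case as a base; the one-dimensional case itself reduces to an intermediate value argument on a single continuous quasi-convex/concave function. Either route yields the desired equality, and since the paper only uses the statement of the lemma, I would ultimately cite \cite{Sion58} rather than reproduce the KKM machinery in detail.
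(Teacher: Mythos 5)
The paper does not prove this lemma at all: it is imported verbatim as a classical result with the citation to Sion (1958), so there is no in-paper argument to compare yours against. Your outline is a faithful sketch of the two standard proofs in the literature. The weak-duality direction is correct and complete as written; the contradiction setup (the level sets $A_y$ and $B_x$ being closed and convex under the stated semicontinuity and quasi-convexity/concavity hypotheses, and the two empty-intersection statements being equivalent to the strict inequalities around $\lambda$) is also correct, as is the reduction to finitely many points via compactness of $Y$ and then of the finite-dimensional hull $X'=\mathrm{conv}(x_1,\dots,x_m)$. The one place where the proposal is a genuine sketch rather than a proof is the final finite-dimensional step: the KKM argument that manufactures a point $(x^*,y^*)$ with $f(x^*,y^*)<\lambda$ and $f(x^*,y^*)>\lambda$ simultaneously is where essentially all of the difficulty of Sion's theorem resides (precisely because quasi-convexity rules out separating-hyperplane or linear-programming duality shortcuts), and you describe it without executing it. You are candid about this, and your fallback of either running Komiya's induction on the size of the finite subfamily or simply citing \cite{Sion58} is the right call --- the latter being exactly what the paper does.
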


The function being optimized is a convex function of $t$ because $e^{t c_{i,j}}\poii{kx}$ is log-convex, and sums of log-convex functions are log-convex, and thus its logarithm is convex. The function being optimized is linear in the maximization variable $\histq$. The domains of optimization are all convex as they are defined by linear constraints. We use limits to argue that the continuous domain of $x$ can be discretized (in the limit) without changing the optimum. In the following lemma, we will show the compactness of the space of $\histq$.

\begin{lemma}\label{lem:compact}
    Focusing on some particular $j$, and considering histogram locations $x$ that are multiples of some fixed spacing $\alpha$, we claim that the set of histograms satisfying the relaxed constraints $\sum_x h_x |x-y_j|\leq \eps$, and $\sum_x h_x=\histp$ and $\forall x,\, h_x\geq 0$ is compact when considered as a subset of the set of sequences, in the $\ell_1$ norm.
\end{lemma}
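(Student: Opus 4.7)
The plan is to invoke the Fréchet--Kolmogorov characterization of compact subsets of $\ell_1$: a subset $K \subseteq \ell_1$ is compact iff it is closed, norm-bounded, and \emph{tail-equisummable}, meaning $\lim_{N \to \infty} \sup_{h \in K} \sum_{|x| > N} |h_x| = 0$. I will verify each of these three properties in turn for our set $K$ of histograms indexed by multiples of $\alpha$.

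Boundedness is immediate: the constraints $h_x \geq 0$ and $\sum_x h_x = \histp$ force $\|h\|_1 = \histp$ uniformly over $K$. For the tail condition, I will exploit the weighted constraint $\sum_x h_x |x - y_j| \leq \eps$. For any $M > 0$ and any $h \in K$,
\[
M \sum_{x : |x - y_j| \geq M} h_x \;\leq\; \sum_x h_x |x - y_j| \;\leq\; \eps,
\]
so $\sum_{x : |x - y_j| \geq M} h_x \leq \eps/M$. Since only finitely many multiples of $\alpha$ lie in the interval $[y_j - M, y_j + M]$, choosing $N$ large enough so that $\{x : |x| > N\}$ is contained in $\{x : |x - y_j| \geq M\}$ yields the uniform tail bound $\eps/M$. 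Sending $M \to \infty$ gives the desired tail-equisummability.

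The most delicate step is closedness. Suppose $h^{(n)} \to h$ in $\ell_1$ with each $h^{(n)} \in K$. Non-negativity of $h$ and the equality $\sum_x h_x = \histp$ are preserved because the coordinate projections $h \mapsto h_x$ and the linear functional $h \mapsto \sum_x h_x$ are continuous on $\ell_1$. The subtlety is preserving $\sum_x h_x |x - y_j| \leq \eps$, since the weight $|x - y_j|$ is unbounded and the functional $h \mapsto \sum_x h_x |x - y_j|$ is not continuous on all of $\ell_1$. The plan is to argue by truncation: for every finite set $S$ of histogram locations, the partial sum $\sum_{x \in S} h^{(n)}_x |x - y_j|$ involves only finitely many terms and therefore converges to $\sum_{x \in S} h_x |x - y_j|$, which is consequently $\leq \eps$; taking the supremum over finite $S$ recovers the full inequality for $h$. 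Combining closedness, boundedness, and the tail condition, the Fréchet--Kolmogorov criterion delivers compactness of $K$ in $\ell_1$. The one place where genuine care is required is the truncation argument handling the unbounded weight in closedness, but this step is short and self-contained.
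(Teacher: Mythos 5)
Your proposal is correct and follows essentially the same route as the paper: both verify the same three-part compactness criterion for subsets of $\ell_1$ (closed, bounded, uniformly small tails), with the identical Markov-type use of the constraint $\sum_x h_x|x-y_j|\leq\eps$ to control the tails. The only difference is that you spell out the closedness step via truncation of the unbounded weight, a point the paper's proof dismisses as ``clearly closed''; your added care there is valid and, if anything, more complete.
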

\begin{proof}
We reparameterize $h_x$ to index by nonnegative integers $i$, so that $h_i$ denotes what we previously referred to as $h_{\alpha i}$.

The constraints on $h$, reexpressed, now read: $\sum_{i=0}^\infty h_i |\alpha i-y_j|\leq \eps$, and $\sum_{i=0}^\infty h_i=\histp$ and $\forall i\geq 0,\, h_i\geq 0$. We show that this is a compact set by equivalently showing that this set is closed, bounded, and equismall at infinity\cite{treves2016topological}. 
The set is clearly closed; it is bounded since we explicitly have that the sum of the entries of $h$ equals the fixed parameter $\histp$. Equismall at infinity means that for every $\beta>0$, there exists an in integer $i_\beta$ such that $\sum_{i=i_\beta}^\infty |h_i|\leq \beta$ for all $h$. Explicitly, we take $i_\beta$ such that $\alpha i_\beta-y_j\geq\frac{\eps}{\beta}$. Thus for all $i\geq i_\beta$, we have that $|\alpha i-y_j|\geq\frac{\eps}{\beta}$. Thus suppose, for the sake of contradiction, that $\sum_{i=i_\beta}^\infty |h_i|> \beta$; multiplying these last two inequalities (and using $h_i\geq 0$) yields that $\sum_{i=i_\beta}^\infty h_i |\alpha i-y_j|> \beta\frac{\eps}{\beta}=\eps$, contradicting the first constraint. Thus these constraints describe a compact set in the $\ell_1$ topology.
\end{proof}

Thus, we can apply Lemma~\ref{lem:compact} to each $j$ separately, and conclude that the constraints on the histogram in Equation~\ref{eq:relaxed-bound} describe a set that lies in the direct product (across the finite set of $j$) of compact sets, and thus is itself compact. Therefore, we can apply Sion's minimax theorem(Lemma~\ref{lem:Sion}).

Thus Equation~\ref{eq:relaxed-bound} equals

\begin{align}
\label{eq:relaxed-bound2}
    \min_{\mathbf{c}} \max\left\{
    \min_{t\leq 0} \max_{\substack{\hist{q|y_j}{}:\sum_j \sum_x \histq |x-y_j|=\eps\\
    \forall j,\; \sum_x \histq=\histp\\
    \forall j,\forall x,\; \histq\geq 0}}\sum_{x,j} \histq \log \sum_i e^{t c_{i,j}} \poii{k x}\;,\;\min_{t'\geq 0}\sum_j \histp \log \sum_i e^{t' c_{i,j}} \poii{k y_j}\right\}
\end{align}



We can pull both min's outside of the brackets, and also outside of the max, and hence merge them with the outer min to yield:

\begin{align}
   \min_{\mathbf{c},t\leq 0,t'\geq 0} \max\left\{
    \max_{\substack{\hist{q|y_j}{}:\sum_j \sum_x \histq |x-y_j|=\eps\\
   \forall j,\; \sum_x \histq=\histp\\
   \forall j,\forall x,\; \histq\geq 0}}\sum_{x,j} \histq \log \sum_i e^{t c_{i,j}} \poii{k x}\;,\;\sum_j \histp \log \sum_i e^{t' c_{i,j}} \poii{k y_j}\right\}
\end{align}

The inner max is now a linear program over $\hist{q|y_j}{}$, so we take its dual. Let $\alpha$ be the (scalar) dual variable for the first constraint; let $\boldsymbol{\gamma}$ be the dual variable for the second constraint, a vector with an entry for each $j$.

\begin{align}
    \min_{\mathbf{c},t\leq 0,t'\geq 0} \max\left\{
     \min_{\substack{\alpha,\boldsymbol{\gamma}:\\\forall j,\forall x\geq 0,\; \alpha|x-y_j|+\gamma_j\geq \log \sum_i e^{t c_{i,j}} \poii{k x}}}\eps\alpha +\sum_j \gamma_j \histp\;,\;\sum_j \histp \log \sum_i e^{t' c_{i,j}} \poii{k y_j}\right\}
\end{align}

We pull the min outside of the brackets, outside of the max, and merge it with the outer min:

\begin{align}
\label{eq:relaxed-dual}
    \min_{\substack{\mathbf{c},t\leq 0,t'\geq 0,\alpha,\boldsymbol{\gamma}:\\\forall j,\forall x\geq 0,\; \alpha|x-y_j|+\gamma_j\geq \log \sum_i e^{t c_{i,j}} \poii{k x}}} \max\left\{
     \eps\alpha +\sum_j \gamma_j \histp\;,\;\sum_j \histp \log \sum_i e^{t' c_{i,j}} \poii{k y_j}\right\}
\end{align}

Summarizing the above, we have:
\begin{lemma}\label{lem:dual-manipulation}
Equation~\ref{eq:relaxed-bound} equals
    Equation~\ref{eq:relaxed-dual}.
\end{lemma}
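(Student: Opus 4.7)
The plan is to formalize the chain of equalities sketched in the paragraphs preceding the lemma, justifying each manipulation and closing the technical gaps. I would carry this out in three stages.

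First, I would isolate the first branch of the outer max in Equation~\ref{eq:relaxed-bound}, namely the expression
\[\min_{t\leq 0}\;\max_{\histq}\;\sum_{x,j}\histq\,\log\sum_i e^{t c_{i,j}}\poii{kx},\]
(subject to the stated histogram constraints) and apply Sion's theorem (Lemma~\ref{lem:Sion}) to swap the two operators. To invoke Sion I need to check: (i) the objective is linear, hence concave and upper semi-continuous, in $\histq$; (ii) the objective is convex and continuous in $t$, because each summand $e^{t c_{i,j}}\poii{kx}$ is log-convex in $t$, sums of log-convex functions are log-convex, and taking logarithms preserves convexity; (iii) both feasible sets are convex (they are cut out by linear constraints); (iv) one of the two sets is compact. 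For (iv) I invoke Lemma~\ref{lem:compact} on each equivalence class $j$ separately and take the finite Cartesian product across $j$, giving compactness of the whole histogram domain in the $\ell_1$ topology. Note that Lemma~\ref{lem:compact} applies to histograms supported on a fixed grid $\alpha\mathbb{Z}_{\geq 0}$; to handle the continuous support originally allowed for $x$, I would discretize the support onto a grid of spacing $\alpha$, apply Sion on the grid, and then let $\alpha\to 0$, relying on continuity of the objective in $x$ to pass the equality to the limit.

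Having swapped min and max in that branch to obtain Equation~\ref{eq:relaxed-bound2}, I would pull $\min_{t\leq 0}$ and $\min_{t'\geq 0}$ through the outer $\max\{\cdot,\cdot\}$. This is legitimate because the two inner minimizations act on disjoint variable sets, so
\[\max\{\min_t A(t),\min_{t'}B(t')\}\;=\;\min_{t,t'}\max\{A(t),B(t')\},\]
and they can then be absorbed into the outermost $\min_{\mathbf{c}}$, yielding a single joint minimization over $(\mathbf{c},t,t')$ wrapped around the remaining $\max\{\cdot,\cdot\}$.

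Second, I would recognize the remaining inner maximization over $\histq$ as a linear program: the objective is linear in $\histq$, and the feasible region is defined by the linear equality $\sum_{j,x}\histq|x-y_j|=\eps$, the linear marginal equalities $\sum_x \histq=\histp$ for each $j$, and nonnegativity. Assigning $\alpha$ as the scalar dual for the distance constraint and $\gamma_j$ for each marginal constraint, the dual program has objective $\eps\alpha+\sum_j\gamma_j\histp$ and constraints $\alpha|x-y_j|+\gamma_j\geq\log\sum_i e^{t c_{i,j}}\poii{kx}$ for every $j$ and every $x\geq 0$ in the support. Since the primal is feasible (take any valid $Q$ at distance exactly $\eps$ from $P$) and the objective is bounded (by the compactness argument above), strong duality holds; the subtle point is again that the underlying program is infinite-dimensional, which I would handle by the same grid-and-limit device used for Sion's theorem. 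Pulling the resulting $\min_{\alpha,\boldsymbol{\gamma}}$ out through the outer $\max$ (using the same disjoint-variable identity as before) and merging it with the outermost minimization gives exactly Equation~\ref{eq:relaxed-dual}.

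The main obstacle is the infinite-dimensional nature of the histogram variable once $x$ ranges over $[0,\infty)$: both the application of Sion's minimax theorem and strong LP duality are cleanest in the finite-dimensional setting, so the delicate part of the write-up is the discretization-and-limit argument that simultaneously justifies both manipulations while preserving the equality of optimal values. Everything else reduces to standard convex-analytic bookkeeping.
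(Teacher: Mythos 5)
Your proposal is correct and follows essentially the same route as the paper: Sion's minimax theorem (with compactness from Lemma~\ref{lem:compact} applied per equivalence class and a discretize-then-limit argument for the continuous support of $x$) to swap the inner min and max, pulling the minimizations over $t,t'$ through the outer max and into the outermost min, and then LP duality on the inner maximization with dual variables $\alpha$ and $\gamma_j$. The only differences are cosmetic — you spell out the strong-duality justification and the grid-limit device somewhat more explicitly than the paper does.
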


Next, to understand how Equation~\ref{eq:relaxed-bound} relates to Equation~\ref{eq:relaxed}, we need to understand the role of the dual and the dual variable $\alpha$.
\begin{lemma}
\label{lem:alpha-non-zero}
The optimum of Equation~\ref{eq:relaxed-dual} is $\leq 0$. If the optimum is 0, then there is an optimal solution with $\alpha=0$; and if the optimum of Equation~\ref{eq:relaxed-dual} is $<0$ then $\alpha< 0$.
\end{lemma}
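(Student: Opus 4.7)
The plan has three parts, corresponding to the three assertions.

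First, to show the optimum is at most $0$, I would simply exhibit the feasible point $(\mathbf{c},t,t',\alpha,\boldsymbol{\gamma})=(\mathbf{0},0,0,0,\mathbf{0})$: at this point the quantity $\log\sum_i e^{tc_{i,j}}\poii{kx}$ collapses to $\log\sum_i\poii{kx}=0$, so the constraint reduces to $0\ge 0$, and both components of the outer $\max$ evaluate to $0$. This simultaneously handles the second assertion: whenever the optimum equals $0$, this very point is optimal with $\alpha=0$.

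For the main (third) assertion---if the optimum is strictly negative then $\alpha<0$ at any optimal solution---I would argue by contrapositive. Let $(\mathbf{c},t,t',\alpha,\boldsymbol{\gamma})$ be any feasible point with $\alpha\ge 0$, and denote the two components of the outer $\max$ as $A=\eps\alpha+\sum_j\gamma_j\histp$ and $B=\sum_j\histp\log\sum_i e^{t'c_{i,j}}\poii{ky_j}$; I claim $\max\{A,B\}\ge 0$. Specializing the feasibility constraint to $x=y_j$ yields $\gamma_j\ge\log\sum_i e^{tc_{i,j}}\poii{ky_j}$, so weighting by $\histp$ and using $\eps\alpha\ge 0$ gives $A\ge f(t)$, where I define $f(s):=\sum_j\histp\log\sum_i e^{sc_{i,j}}\poii{ky_j}$; note also that $B=f(t')$.

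Next I would exploit the structure of $f$: as a sum of log-moment-generating-functions in the scalar variable $s$, $f$ is convex with $f(0)=0$, and its derivative at the origin equals $\mu:=\sum_j\histp\,\Exp_{I\sim\Poi(ky_j)}[c_{I,j}]$. The tangent line bound of convexity gives $f(s)\ge \mu s$ for every $s$. If $\mu\ge 0$, then combined with $t'\ge 0$ this yields $B=f(t')\ge\mu t'\ge 0$; if $\mu<0$, then combined with $t\le 0$ this yields $A\ge f(t)\ge\mu t\ge 0$. Either way $\max\{A,B\}\ge 0$, contradicting the assumption of a strictly negative optimum and completing the argument.

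The main potential subtlety is ensuring $f$ is well-defined and admits a derivative at $s=0$---i.e., that the series $\sum_i e^{sc_{i,j}}\poii{ky_j}$ converges in a neighborhood of $0$. Finiteness of the objective at the feasible point rules out divergence at either $s=t$ or $s=t'$, and convexity then propagates convergence to the whole interval $[t,t']\ni 0$; in the degenerate case of only one-sided convergence, the case analysis extends immediately by replacing $\mu$ with any element of the subdifferential $\partial f(0)$. Thus the proof reduces to a one-line tangent-line inequality applied in two symmetric sign cases, and poses no substantial technical obstacle.
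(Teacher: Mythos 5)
Your proposal is correct and follows essentially the same route as the paper's proof: the all-zeros feasible point handles the first two claims, and the third claim follows by specializing the constraint to $x=y_j$ and using convexity of $s\mapsto\sum_j\histp\log\sum_i e^{sc_{i,j}}\poii{ky_j}$ together with its vanishing at $s=0$ to conclude that the two max-terms cannot both be negative. Your tangent-line phrasing of the convexity step and your attention to convergence of the series are minor refinements of the same argument.
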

\begin{proof}
If the value of Equation~\ref{eq:relaxed-dual} equals 0, then, consider setting $\alpha=\gamma_j=t=t'=0$, in which case all the exponentials will equal 1, and, since $\sum_i \poii{kx}=1$ for any $x$, both logarithm expression equal 0, and the constraints are satisfied and the overall expression has value 0. Thus we have constructed an optimal solution satisfying the first claims of the lemma. Further, the optimal value can never be $>0$ because the solution above gives an objective value of 0, no matter what the the inputs $\eps,\histp,y$ are.

For the last part of the lemma, suppose for the sake of contradiction $\alpha\geq 0$. Consider setting $x$ from the constraints below the min to equal $y_j$.

Then, the first term in the max is at least \[\sum_j \gamma_j \histp\geq \sum_j \histp\log \sum_i e^{t c_{i,j}} \poii{ky_j}\]

The right hand side is convex as a function of $t$ (since $e^{t c_{i,j}}$ is log-linear and thus log-convex, and the sum of log-convex functions is log-convex; hence its logarithm is convex, and the entire expression is a sum of convex functions and hence convex). Further, its value at $t=0$ is 0. Thus, by convexity, its values at $t\leq 0$ and $t'\geq 0$ cannot \emph{both} be negative. Therefore the maximum of these is $\geq 0$, contradicting our assumption that the objective is negative.
\end{proof}

\begin{lemma}
\label{lem:relaxed-bound-tight}
   Equation~\ref{eq:relaxed} equals Equation~\ref{eq:relaxed-bound}.
\end{lemma}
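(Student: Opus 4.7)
The plan is to prove equality of the two expressions in Equations~\ref{eq:relaxed} and~\ref{eq:relaxed-bound}, which differ only in the constraint ``$\sum_j\sum_x\histq|x-y_j|\geq\eps$'' versus ``$=\eps$'' on the inner $\max$ over $\histq$. The easier direction, Equation~\ref{eq:relaxed-bound} $\leq$ Equation~\ref{eq:relaxed}, is immediate: the feasible region for $\histq$ in Equation~\ref{eq:relaxed-bound} is a subset of that in Equation~\ref{eq:relaxed}, so the inner $\max$ can only shrink, and this inequality is preserved through the outer $\max$ and outer $\min_{\mathbf{c}}$.

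For the harder direction, the plan is to rerun the duality chain of Lemma~\ref{lem:dual-manipulation} --- Sion's minimax to swap the inner $\min_t$ with $\max_{\histq}$, pulling all inner $\min$s outside, and then LP duality on the resulting inner $\max_{\histq}$ --- verbatim to Equation~\ref{eq:relaxed}. The derivation goes through with a single change: the dual variable $\alpha$ associated with the distance constraint now carries the sign restriction $\alpha\leq 0$, since a one-sided primal constraint dualizes to a sign-restricted variable, whereas dualizing an equality gave a free $\alpha$ in Equation~\ref{eq:relaxed-dual}. Hence Equation~\ref{eq:relaxed} equals Equation~\ref{eq:relaxed-dual} augmented by $\alpha\leq 0$. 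On one hand, adding a constraint to a minimization can only weakly increase its optimum, so the augmented value is $\geq$ the optimum of Equation~\ref{eq:relaxed-dual}; on the other hand, Lemma~\ref{lem:alpha-non-zero} exhibits an optimizer of Equation~\ref{eq:relaxed-dual} with $\alpha\leq 0$, which is feasible for the augmented problem, so the augmented value is also $\leq$ the optimum of Equation~\ref{eq:relaxed-dual}. Combined with Lemma~\ref{lem:dual-manipulation}, which identifies Equation~\ref{eq:relaxed-dual} with Equation~\ref{eq:relaxed-bound}, this yields Equation~\ref{eq:relaxed} $=$ Equation~\ref{eq:relaxed-bound}.

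The main technical obstacle is verifying that Sion's minimax step applies in the $\geq\eps$ setting, since Lemma~\ref{lem:compact}'s compactness argument leaned on the one-sided bound $\sum_x h_x|x-y_j|\leq\eps$ for equismallness at infinity. This is repaired by noting that $\histq$ parametrizes probability masses, so the relevant support values $x$ lie in $[0,1]$; after the discretization by spacing $\alpha$, the feasible set is a bounded subset of a finite-dimensional simplex, compactness is trivial, and Sion's applies as before. Once Sion's is secured, the rest of the derivation is pure bookkeeping, and the whole argument reduces to the observation that the sole discrepancy with Equation~\ref{eq:relaxed-dual} is the sign constraint on $\alpha$, which Lemma~\ref{lem:alpha-non-zero} has already preemptively neutralized.
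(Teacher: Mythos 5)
Your easy direction (Equation~\ref{eq:relaxed-bound} $\leq$ Equation~\ref{eq:relaxed} by monotonicity of the inner max in its feasible set) is correct and is exactly the paper's argument. Your hard direction also isolates the right ingredient --- the sign of the dual variable $\alpha$ together with Lemma~\ref{lem:alpha-non-zero} is what makes the $=\eps$ and $\geq\eps$ versions agree --- but the execution has a genuine gap at the step you yourself flagged, and the repair you propose does not work. The feasible set of histograms under ``$\sum_j\sum_x\histq|x-y_j|\geq\eps$, $\sum_x\histq=\histp$, $\histq\geq 0$'' is \emph{not} compact in $\ell_1$: the upper bound $\sum_x h_x|x-y_j|\leq\eps$ was precisely what gave equismallness at infinity in Lemma~\ref{lem:compact}, and with only a lower bound one can place all mass $\histp$ at a single location $x=\alpha m$ with $m\to\infty$, a sequence with no $\ell_1$-convergent subsequence. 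Since the $t$-domain $(-\infty,0]$ is also noncompact, Sion's theorem does not apply to the $\geq\eps$ problem. Your fix asserts that the support values $x$ lie in $[0,1]$, but Equation~\ref{eq:relaxed} imposes no such constraint: $\histq$ is a deliberately relaxed object that need not describe an actual distribution (the paper explicitly omits the total-mass constraint, as discussed in the depoissonization section), and the locations $x$ range over all of $[0,\infty)$. Truncating to $x\in[0,1]$ changes the feasible set of the inner maximization and hence, a priori, its value; that would itself require proof.

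The paper avoids this entirely by never invoking Sion (or any compactness) on the $\geq\eps$ problem. It observes that for the $\geq\eps$ version one only needs the \emph{weak} minimax inequality $\max_{\histq}\min_t\leq\min_t\max_{\histq}$, which holds unconditionally; this bounds Equation~\ref{eq:relaxed} above by the swapped problem (Equation~\ref{eq:relaxed-bound2}) with a $\geq\eps$ constraint. The insensitivity of the swapped problem to replacing $=\eps$ by $\geq\eps$ is then read off from the sign $\alpha\leq 0$ of the dual variable of the \emph{already-dualized} $=\eps$ problem, where Sion and Lemma~\ref{lem:compact} have legitimately been applied. If you want to keep your route of re-deriving the full duality chain directly for the $\geq\eps$ constraint, you would need either a different minimax theorem that tolerates the noncompactness, or an argument that the inner supremum is attained (or approached) on a compact sub-region --- neither of which is supplied by the observation that probabilities of a genuine distribution lie in $[0,1]$.
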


\begin{proof}
Recall that $\alpha$ in Equation~\ref{eq:relaxed-dual} is the dual variable corresponding to the constraint of Equation~\ref{eq:relaxed-bound2} that the $\ell_1$ distance from the hypothesis equals $\eps$. Thus the fact that Equation~\ref{eq:relaxed-dual} has an optimal solution with $\alpha\leq 0$, from Lemma~\ref{lem:alpha-non-zero}, means that the optimum of Equation~\ref{eq:relaxed-bound2} would be unchanged if we changed this $=\eps$ constraint to $\geq\eps$. Equation~\ref{eq:relaxed-bound} with a $\geq\eps$ constraint is at most Equation~\ref{eq:relaxed-bound2} with a $\geq\eps$ constraint (trivially, from the ``weak" direction of minimax), which from the above equals Equations~\ref{eq:relaxed-bound} and~\ref{eq:relaxed-bound2} as written. Conversely, Equation~\ref{eq:relaxed-bound} with a $\geq\eps$ bound is at least Equation~\ref{eq:relaxed-bound} as written, since enlarging the domain of a maximum can only increase its value. Thus Equation~\ref{eq:relaxed-bound} must equal Equation~\ref{eq:relaxed-bound} with a $\geq\eps$ bound, namely Equation~\ref{eq:relaxed}.
\end{proof}

Combining Lemmas~\ref{lem:dual-manipulation} and~\ref{lem:relaxed-bound-tight} yields that Equation~\ref{eq:relaxed} equals Equation~\ref{eq:relaxed-dual}.


Looking closely at the inner max in Equation~\ref{eq:relaxed-dual}, we note that the first term is the dual of the type II error probability i.e, the probability of the tester accepting a sample from $Q$, while the second term is the type I error probability, i.e. the probability of the tester rejecting a sample from $P$. Intuitively, we expect both error probabilities to be equal in the optimum. We formalize this intuition with the following lemma.

\begin{lemma}
\label{lem:two-sides-equal}
There exists an optimum for Equation~\ref{eq:relaxed-dual} such that the two terms in the max are equal.
\end{lemma}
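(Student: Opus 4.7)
The plan is to prove the (stronger) statement that \emph{every} optimum already has the two terms equal, by showing that any gap between them would let us strictly decrease the objective. The single key modification will be a uniform additive shift of all tester coefficients, with a compensating shift of $\boldsymbol{\gamma}$.

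First I would dispose of the degenerate case: if the optimum value equals $0$, Lemma~\ref{lem:alpha-non-zero} already gives an optimum with $\alpha=0$, and in fact the all-zero solution $\mathbf{c}=0,\,t=t'=0,\,\gamma_j=0$ is feasible and makes both terms of the max equal to $0$, so there is nothing to do. For the case where the optimum value is strictly negative, I would first rule out the boundary values of $t,t'$: if $t'=0$ then Term 2 collapses to $\sum_j \histp \log 1 = 0$, forcing the max to be nonnegative; and if $t=0$ then the constraint becomes $\alpha|x-y_j|+\gamma_j \geq 0$, which combined with $\alpha<0$ (from Lemma~\ref{lem:alpha-non-zero}) forces $\gamma_j$ to blow up as $|x-y_j|\to\infty$, so Term 1 diverges. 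Hence any optimum with negative value has $t<0$ and $t'>0$ strictly.

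Now assume for contradiction that at the optimum $T_1 \ne T_2$; say $T_1 > T_2$ (the reverse case is symmetric). The move is to replace $c_{i,j}$ by $c_{i,j}+\beta$ for every $i,j$ and simultaneously replace $\gamma_j$ by $\gamma_j + t\beta$, for some $\beta>0$ to be chosen. The constraint stays satisfied, because $\log\sum_i e^{t(c_{i,j}+\beta)}\poii{kx} = t\beta + \log\sum_i e^{t c_{i,j}}\poii{kx}$, and the left-hand side $\alpha|x-y_j|+\gamma_j$ also picks up exactly $t\beta$. Writing $n=\sum_j \histp$, a direct calculation shows Term 1 changes by $t\beta n$ (decreases, since $t<0$) and Term 2 changes by $t'\beta n$ (increases, since $t'>0$). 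Choosing $\beta = (T_1-T_2)/(n(t'-t)) > 0$ equalizes the two terms at the common value $(T_1 t' - T_2 t)/(t'-t)$, which is a strict convex combination of $T_1$ and $T_2$ and hence strictly less than $T_1$. This contradicts optimality, so at every optimum $T_1 = T_2$.

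The only substantive observation in this argument is identifying the right one-parameter deformation: a uniform additive shift of $\mathbf{c}$, compensated in $\boldsymbol{\gamma}$, leaves the constraint invariant and changes Term 1 and Term 2 at linear rates proportional to $t$ and $t'$ respectively; since $t$ and $t'$ are constrained to have opposite signs, one rate is negative and the other positive, so any gap between the two terms can be closed without exceeding the current objective. Everything else is routine bookkeeping.
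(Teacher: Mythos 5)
Your proposal is correct, and it reaches the lemma by a genuinely different (and slightly stronger) route than the paper. The paper's proof keeps the objective value fixed and simply raises whichever term is smaller until it meets the larger one: if the first term is bigger it increases a single coefficient $c_{i,j}$ (using $t'>0$ to push the second term up and $t\leq 0$ to keep the constraint satisfied); if the first term is smaller it increases $\boldsymbol{\gamma}$. This only shows that \emph{some} optimum has the two terms equal, which is all the lemma claims. You instead exhibit a strictly improving perturbation --- the uniform shift $c_{i,j}\mapsto c_{i,j}+\beta$, $\gamma_j\mapsto\gamma_j+t\beta$, which preserves the constraint exactly and moves the two terms at rates $tn$ and $t'n$ of opposite sign --- so that any gap between the terms contradicts optimality; this proves that \emph{every} optimum is equalized. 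The price of your stronger conclusion is the extra preliminary step of ruling out $t=0$ at a negative optimum (the paper only needs $t'>0$), and your argument for that step is sound: Lemma~\ref{lem:alpha-non-zero} forces $\alpha<0$ there, and then the constraint $\alpha|x-y_j|+\gamma_j\geq 0$ for all $x\geq 0$ is unsatisfiable by any finite $\gamma_j$. The arithmetic of the equalizing choice $\beta=(T_1-T_2)/(n(t'-t))$ and the identification of the common value as the convex combination $\frac{t'}{t'-t}T_1+\frac{-t}{t'-t}T_2$ both check out. Either approach suffices for the lemma as stated; yours is a cleaner variational argument and yields uniqueness of the equalization property across all optima, while the paper's is more elementary and avoids needing strict signs on both Chernoff parameters.
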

\begin{proof}
Letting $\alpha=\gamma_j=t=t'=0$ yields a feasible point for Equation~\ref{eq:relaxed-dual} with both terms in the max equal to 0.

The only remaining case is when the value of the optimum is $<0$. Observe that in this scenario, $t'>0$ (otherwise, $t=0$ and thus the second term of the max is equal to 0, and hence the overall value of the objective function is at least 0). Consider an optimal solution $\mathbf{c},\boldsymbol{\gamma}$. We separately consider the cases where the first term of the max is bigger, or smaller.

Suppose the first term is bigger. Then since $t'>0$, find a nonzero hypothesis value $\histp$, and increase the corresponding $c_{i,j}$ until the second term of the max equals the first. The variable $c_{i,j}$ only occurs in one other place, namely the constraint of the min. Because $t\leq 0$, increasing $c_{i,j}$ will decrease the right-hand side of the inequality constraint, and thus will not violate it. Hence we have constructed a feasible solution with the same objective value but which now satisfies the claim of the lemma.

For the other case, suppose the first term is smaller. We increase $\boldsymbol{\gamma}$ until the first term equals the second term. The only other places $\boldsymbol{\gamma}$ appears are on the left-hand side of the inequality constraints below the min; increasing $\alpha$ can only increase this left-hand side. Hence we have constructed a feasible solution with the same objective value but which now satisfies the claim of the lemma.
\end{proof}

We next split Equation~\ref{eq:relaxed-dual} into a component for each $j$, expressing the overall optimization as a ``nested'' optimization, choosing $\alpha,t,t'$ on the outside, and coefficients $\kappa_{i,j}$ inside. Here we use $\kappa_{i,j}$ in place of $c_{i,j}$ and $u$ to replace $t$ and $t'$. The following lemma will show that this change does not have any effect on the results of our optimization problem. And then the further lemmas will rely on this nested optimization to derive clean structural properties of the optimum.

\begin{lemma}\label{lem:merged}
Equation~\ref{eq:relaxed-dual} equals

\begin{equation}\label{eq:merged-split}\hspace{-2.5cm}\min_{\substack{\alpha\leq 0\\u\in[0,1]}}\left(\eps\alpha(1-u)+\sum_j\min_{\kappa_{i,j}}\left(u \histp\left(\log \sum_i e^{(1-u) \kappa_{i, j}} \poii{ky_j}\right)+(1-u)\histp\max_{x\geq 0}\left(-\alpha|x-y_j|+\log \sum_i e^{-u \kappa_{i, j}} \poii{kx}\right)\right)\right)\end{equation}
\end{lemma}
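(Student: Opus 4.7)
The plan is to reduce Equation~\ref{eq:relaxed-dual} to Equation~\ref{eq:merged-split} by reparameterizing the Chernoff parameters, eliminating $\gamma_j$, and using Lemma~\ref{lem:two-sides-equal} to replace the outer $\max\{A,B\}$ by a convex combination.

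First I would reparameterize $(t,t')$ by $(u,\lambda)$ via $t = -u\lambda$ and $t' = (1-u)\lambda$ with $u \in [0,1]$, $\lambda \geq 0$, and set $\kappa_{i,j} = \lambda c_{i,j}$. This is a bijection between $\{(c,t,t'): t\leq 0, t'\geq 0\}$ and $\{(\kappa,u,\lambda): u\in[0,1], \lambda\geq 0\}$ (modulo the degenerate $\lambda=0$ case), and the objective depends on $(c,t,t')$ only through the products $tc_{i,j} = -u\kappa_{i,j}$ and $t'c_{i,j} = (1-u)\kappa_{i,j}$, so $\lambda$ drops out entirely. I would then apply Lemma~\ref{lem:alpha-non-zero} to restrict $\alpha \leq 0$, and eliminate $\gamma_j$ by setting it equal to $\max_{x\geq 0}[\log\sum_i e^{-u\kappa_{i,j}}\poii{kx} - \alpha|x-y_j|]$, which is the tight value given the linear dependence with nonnegative coefficient $\histp$.

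The crux is replacing $\max\{A,B\}$ by the convex combination $(1-u)A + uB$ where the weight $u$ coincides with the parameter introduced in the reparameterization. By Lemma~\ref{lem:two-sides-equal}, there is an optimum of Equation~\ref{eq:relaxed-dual} at which the two terms satisfy $A^* = B^*$, so every convex combination equals the max at that point and equals $\Delta(P,k,\eps)$; evaluating Equation~\ref{eq:merged-split} at the corresponding $(u^*, \kappa^*, \alpha^*)$ gives the $\leq$ direction of the claim. For the reverse direction, I would track how any feasible point of the convex-combination formulation maps back through the reparameterization to a feasible point of Equation~\ref{eq:relaxed-dual}, arguing that the coupling between $u$ and $\kappa$ through the exponents forces the inner $\min_\kappa$ at any non-optimal $u$ to remain at least $\Delta$ as well. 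Expanding $(1-u)A+uB$ explicitly yields
\[(1-u)\eps\alpha + \sum_j \histp\bigl\{(1-u)\max_x[-\alpha|x-y_j| + \log\textstyle\sum_i e^{-u\kappa_{i,j}}\poii{kx}] + u\log\textstyle\sum_i e^{(1-u)\kappa_{i,j}}\poii{ky_j}\bigr\}.\]

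Finally, since each summand involves $\kappa_{i,j}$ for only its own $j$ while $u$ and $\alpha$ remain global, $\min_{\boldsymbol{\kappa}}$ commutes with $\sum_j$ and can be pushed inside the sum, producing exactly Equation~\ref{eq:merged-split}. The main obstacle is the reverse direction of the convex-combination step: since a convex combination is in general only a lower bound on the max, one must leverage the specific Chernoff-bound structure—and in particular the saddle-point property that the inner $\min_\kappa$ creates in tandem with the outer $\min_u$—to rule out the possibility that Equation~\ref{eq:merged-split} strictly improves on $\Delta(P,k,\eps)$. The reparameterization step itself is routine, as is the elimination of $\gamma_j$ and the commutation of $\min$ with $\sum$; the substantive content lies in this reconciliation between the min-max and the convex-combination formulations.
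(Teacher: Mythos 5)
Your setup is the same as the paper's: reparameterize $(t,t')$ as $(u,\lambda)$ with $\kappa_{i,j}$ absorbing the scale, eliminate $\gamma_j$ by setting it to the tight value, and compare $\max\{A,B\}$ with the convex combination $(1-u)A+uB$. The direction you complete (Equation~\ref{eq:merged-split} $\leq$ Equation~\ref{eq:relaxed-dual}) is fine, and in fact does not even need Lemma~\ref{lem:two-sides-equal}: since $\max\{A,B\}\geq (1-u)A+uB$ pointwise and replacing $\gamma_j$ by its lower bound only decreases the first term, the optimum of Equation~\ref{eq:relaxed-dual} already dominates a feasible value of Equation~\ref{eq:merged-split}.

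The gap is exactly where you flag it: the reverse direction. You say one must "leverage the saddle-point property" to rule out that the convex-combination formulation strictly improves on $\Delta$, but you do not supply the argument, and the appeal to a saddle point is not the mechanism the proof actually needs. The paper's resolution is an explicit additive shift: given any feasible $(\alpha,u,\kappa)$ of Equation~\ref{eq:merged-split}, set $t=-u$, $t'=1-u$, and $c_{i,j}=\kappa_{i,j}+s$ for a scalar $s$ to be chosen, with $\gamma_j$ set to the corresponding max. Each of the two terms of the max in Equation~\ref{eq:relaxed-dual} is then an affine function of $s$, with slopes proportional to $t<0$ and $t'>0$ respectively --- opposite signs --- so there is an $s$ making the two terms \emph{equal}. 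At that $s$ the max coincides with the $(1-u,u)$-weighted convex combination, and the convex combination is invariant to $s$ precisely because $(1-u)t+ut'=0$ by the definition of $u$; hence it equals the objective of Equation~\ref{eq:merged-split} at the original point. This yields Equation~\ref{eq:relaxed-dual} $\leq$ Equation~\ref{eq:merged-split} pointwise over feasible points, with no appeal to optimality or to any minimax/saddle structure. Without some such equalization device, your "tracking back through the reparameterization" does not close, since a generic feasible point of Equation~\ref{eq:merged-split} maps to a point of Equation~\ref{eq:relaxed-dual} whose max can strictly exceed the convex combination.
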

\begin{proof}
Given any optimal point of Equation~\ref{eq:relaxed-dual}, specified by $c_{i,j},t\leq 0,t'\geq 0,\alpha,\gamma_j$, we consider it in the context of Equation~\ref{eq:merged-split} (noting, by Lemma~\ref{lem:alpha-non-zero} that $\alpha\leq 0$ at optimum), and setting $\kappa_{i,j}=(t'-t) c_{i,j}$ and $u=-\frac{t}{t'-t}$ (with $u$ set arbitrarily if $t=t'=0$). Substituting in the relation $t c_{i,j}=-u\kappa_{i,j}$ into the constraint in Equation~\ref{eq:relaxed-dual}, we have \begin{equation}
    \label{eq:gamma-inequality}
\gamma_j\geq \max_{x\geq 0}\left(-\alpha|x-y_j|+\log \sum_i e^{-u \kappa_{i, j}} \poii{kx}\right)
\end{equation}

And thus, since $(1-u)\kappa_{i,j}=t'c_{i,j}$, the convex combination of the two terms in the max of the objective of Equation~\ref{eq:relaxed-dual}, with weights $1-u,u$, after substituting Equation~\ref{eq:gamma-inequality} for each $\gamma_j$ in the first term, is greater than or equal to the objective function of Equation~\ref{eq:merged-split}. Thus by the triangle inequality, this max in Equation~\ref{eq:relaxed-dual}---which is its objective function value at optimum---is at least the feasible value we have found for Equation~\ref{eq:merged-split}. Thus Equation~\ref{eq:merged-split} is less than or equal to Equation~\ref{eq:relaxed-dual}.

Conversely, given any feasible point of Equation~\ref{eq:merged-split}, we construct the corresponding feasible point of Equation~\ref{eq:relaxed-dual}, by setting $t=-u$, $t'=1-u$ (so that $t'-t=1$), $c_{i,j}=\kappa_{i,j}+s$ for a shift $s$ to be determined later, and setting \[\gamma_j = \max_{x\geq 0}\left(-\alpha|x-y_j|+\log \sum_i e^{t c_{i, j}} \poii{kx}\right)\]
We note that $\gamma_j$ depends linearly on $s$ with slope $t$, and thus the first term of the max in the objective of Equation~\ref{eq:relaxed-dual} depends linearly on $s$ with slope $t$ (since $\sum_j \histp=1$). Similarly, the second term in the max in the objective of Equation~\ref{eq:relaxed-dual} depends linearly on $s$ with slope $t'$. Thus we pick $s$ which makes these two terms equal.

Thus the objective value of Equation~\ref{eq:relaxed-dual} here equals both of the two terms of its max (because we just chose $s$ to make these terms equal); and in particular equals the convex combination of them with weights $1-u=t',u=-t$, which is thus exactly the objective value of Equation~\ref{eq:merged-split}, since the contributions of $s$ (whatever they are) exactly cancel with the weights. Thus Equation~\ref{eq:relaxed-dual} is less than or equal to Equation~\ref{eq:merged-split}.

Combining both parts yields the desired equality.
\end{proof}

We now show the uniqueness of the optimal solutions for $\kappa_{i, j}$ in Equation~\ref{eq:merged-split}, up to an irrelevant $j$-dependent additive shift.
\begin{lemma}
\label{lem:unique}
The inner minimization in Equation~\ref{eq:merged-split}, if $u\in(0,1)$, has a unique solution for $\kappa_{\cdot,j}$, for each $j$, up to an additive shift.
\end{lemma}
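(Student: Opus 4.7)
The plan is to exhibit strict convexity modulo the additive-shift direction, and conclude uniqueness from it.

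First, I would verify shift-invariance directly. If $\kappa_{\cdot,j}$ is replaced by $\kappa_{\cdot,j}+c\mathbf{1}$ for some scalar $c$, then $\log\sum_i e^{(1-u)\kappa_{i,j}}\poii{ky_j}$ increases by $(1-u)c$, while $\log\sum_i e^{-u\kappa_{i,j}}\poii{kx}$ decreases by $uc$ for every $x\geq 0$, and hence so does the max over $x$. Weighting these by $u\histp$ and $(1-u)\histp$ respectively, the two contributions are $u(1-u)c\,\histp$ and $-u(1-u)c\,\histp$, which exactly cancel. So the inner objective is constant on each coset $\kappa_{\cdot,j}+\mathbb{R}\cdot\mathbf{1}$, confirming that the alleged non-uniqueness direction really is a symmetry.

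Second, I would show strict convexity transversal to this shift. The map $\kappa\mapsto\log\sum_i e^{(1-u)\kappa_i}\poii{ky_j}$ is a log-moment-generating function, whose Hessian at any $\kappa$ equals $(1-u)^2$ times the covariance matrix of the coordinate-indicator variables under the tilted measure $\pi_i\propto e^{(1-u)\kappa_i}\poii{ky_j}$. Since $\poii{ky_j}>0$ for every $i\geq 0$, this covariance is positive semidefinite with one-dimensional null space spanned exactly by $\mathbf{1}$. Because $u\in(0,1)$, the prefactor $(1-u)^2\cdot u\histp$ is strictly positive, so the first term of the inner objective is strictly convex along every non-constant direction. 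The second term is a supremum over $x\geq 0$ of functions of $\kappa$ of the same log-sum-exp form (each individually convex in $\kappa$), hence is itself convex. Adding a convex function to one that is strictly convex modulo shifts preserves strict convexity modulo shifts, and therefore the minimizer of the inner problem exists uniquely up to an additive shift, which is exactly the claim.

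The main subtlety I anticipate is that $\kappa_{\cdot,j}$ is an infinite sequence indexed by $i\in\mathbb{N}$, so in principle one has to rule out ``degenerate directions at infinity'' along which strict convexity could fail. This is controlled by restricting attention to sequences on which the log-sum-exp is finite, together with the fact that $\poii{ky_j}>0$ for every $i$: the tilted measure $\pi$ then assigns positive mass to each coordinate, so the coordinate-by-coordinate covariance argument remains valid and produces no null direction beyond the constant shift. A secondary sanity check is that allowing $u\in\{0,1\}$ collapses one of the two weights to zero and kills the strict convexity (only one of the Poisson families enters the objective), explaining why the hypothesis $u\in(0,1)$ is exactly what is needed.
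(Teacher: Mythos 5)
Your proposal is correct and takes essentially the same route as the paper: both arguments reduce uniqueness to the fact that the term $\log \sum_i e^{(1-u) \kappa_{i, j}} \poii{ky_j}$ is strictly convex in $\kappa_{\cdot,j}$ except along the constant-shift direction, while the remaining term is convex. The only difference is in how that strict convexity is certified --- you compute the Hessian as a tilted covariance matrix with null space spanned by $\mathbf{1}$, whereas the paper interpolates between two putative optima, notes each convex term must be affine on the segment, and invokes the rigidity fact that a sum of positive exponentials is itself exponential only when all bases coincide; both verifications are sound, and your explicit check of shift-invariance and of the role of $u\in(0,1)$ is a welcome addition.
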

\begin{proof}
Consider two different optima, $\kappa_{i,j}$ and $\kappa'_{i,j}$. Since Equation~\ref{eq:merged-split} is convex in $\kappa$, and in particular, is the sum of convex terms, we have that each term must in particular be linear at any convex combination of $\kappa_{i,j}$ and $\kappa'_{i,j}$. In particular, $\log \sum_i e^{(1-u) \kappa_{i, j}} \poii{ky_j}$ must be linear, meaning that $\sum_i e^{(1-u) \kappa_{i, j}} \poii{ky_j}$ must be exponential as we interpolate between $\kappa_{i,j}$ and $\kappa'_{i,j}$. But the only way for the sum of several exponentials to equal an exponential is if all the exponentials have the same base; thus $\kappa_{i,j}=\kappa'_{i,j}+s$ for some additive shift, as claimed.    
\end{proof}

This shift-invariant structure of $\kappa_{i, j}$'s allows us an extra degree of freedom, as we can choose the shifting factor to further simplify the equation. In the following lemma, we reexpress the inner minimization in Equation~\ref{eq:merged-split} by taking its dual and then simplifying while utilizing the shift-invariant property; this also allows us to find the crucial and insightful closed form for $\kappa_{\cdot, j}$.

\begin{lemma}\label{lem:closed-form-kij}
The inner minimization in Equation~\ref{eq:merged-split}, if $u\in(0,1)$, can be expressed for any $j$ as 
\begin{equation}\label{eq:max-formulation}\max_{a_x\geq 0} \histp\log\frac{\sum_i \left(\sum_x a_x \poii{k x}\right)^{1-u}\poii{k y_j}^u}{\left(\sum_x a_x e^{\alpha|x-y_j|}\right)^{1-u}}\end{equation}
where the optimal variable values of Equation~\ref{eq:merged-split} can be expressed in terms of the optimal variables of Equation~\ref{eq:max-formulation} as
\begin{equation}\label{eq:cij-expression}\kappa_{i,j}=\log\sum_x a_x\frac{ \poii{kx}}{ \poii{ky_j}}\end{equation}
\end{lemma}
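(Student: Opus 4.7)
The plan is to apply convex (Lagrangian) duality to the inner minimization, trading the semi-infinite maximization over $x\geq 0$ for a maximization over nonnegative dual multipliers $a_x$. The derivation is essentially a single Lagrangian calculation, but organized so that the resulting dual objective matches Equation~\ref{eq:max-formulation} verbatim.

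First, I rewrite the inner minimization in epigraph form: introduce a scalar $T$ together with the family of constraints $T\geq -\alpha|x-y_j|+\log\sum_i e^{-u\kappa_{i,j}}\poii{kx}$ for every $x\geq 0$. Setting $M_j(\kappa) := \log\sum_i e^{(1-u)\kappa_{i,j}}\poii{ky_j}$ and $Z := e^{M_j(\kappa)}$, the inner problem becomes
\[
\min_{\kappa,T}\; u\,\histp\,M_j(\kappa)+(1-u)\,\histp\,T\quad\text{subject to}\quad \sum_i e^{-u\kappa_{i,j}}\poii{kx}\leq e^{T+\alpha|x-y_j|}\ \ \forall x\geq 0.
\]
This is a convex program in $(\kappa,T)$, so under a mild regularity condition strong duality will hold; I address this point below.

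Second, I introduce multipliers $b_x\geq 0$ for the constraints and form the Lagrangian
\[
L(\kappa,T,b)=u\,\histp\,M_j(\kappa)+(1-u)\,\histp\,T+\sum_x b_x\Bigl[\sum_i e^{-u\kappa_{i,j}}\poii{kx}-e^{T+\alpha|x-y_j|}\Bigr].
\]
Stationarity in $T$ gives $e^T\sum_x b_x e^{\alpha|x-y_j|}=(1-u)\histp$, and stationarity in $\kappa_{i,j}$ gives $(1-u)\histp\,\tfrac{e^{\kappa_{i,j}}\poii{ky_j}}{Z}=\sum_x b_x\poii{kx}$. Making the change of variable $a_x := \tfrac{Z}{(1-u)\histp}\,b_x$ instantly produces the closed form $\kappa_{i,j}=\log\sum_x a_x\tfrac{\poii{kx}}{\poii{ky_j}}$, which is Equation~\ref{eq:cij-expression}. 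A direct substitution then gives $Z=\sum_i(\sum_x a_x\poii{kx})^{1-u}\poii{ky_j}^u$ and $T=M_j-\log\sum_x a_x e^{\alpha|x-y_j|}$.

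Third, I plug the optimal $(\kappa,T)$ back into $L$. The key algebraic step is that, using the $\kappa$-stationarity identity, one can show $\sum_i e^{-u\kappa_{i,j}}\sum_x b_x\poii{kx}=(1-u)\histp$, so the bracketed ``constraint'' term in $L$ telescopes against $(1-u)\histp\cdot T$ and vanishes, leaving $L_{\mathrm{opt}}=u\histp M_j+(1-u)\histp T=\histp\log Z-(1-u)\histp\log\sum_x a_x e^{\alpha|x-y_j|}$, which is exactly Equation~\ref{eq:max-formulation}. Since the dual objective is invariant under positive rescaling of $b$, optimizing over $b_x\geq 0$ is equivalent to optimizing over $a_x\geq 0$ directly.

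The main obstacle I expect is justifying strong duality for the semi-infinite family of constraints indexed by $x\in[0,\infty)$. I plan to handle this by first discretizing $x$ to a finite grid on a compact set $[0,K]$, where strong duality holds by Slater's condition (pick any $\kappa$ and $T$ large), and then taking the limit as $K\to\infty$ and the grid spacing goes to $0$. Lemma~\ref{lem:compact} provides the appropriate topological framework for this limit, and the fact that $\alpha\leq 0$ from Lemma~\ref{lem:alpha-non-zero} controls the behavior of the objective at infinity. Uniqueness of $\kappa_{\cdot,j}$ up to an additive shift (Lemma~\ref{lem:unique}) then confirms that the closed form of Equation~\ref{eq:cij-expression} is the genuine optimizer and not merely one of several.
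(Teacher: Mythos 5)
Your overall strategy---Lagrangian duality applied to the inner convex program, stationarity in $\kappa$, and a change of variables from the multipliers to $a_x$---is the same as the paper's, and all of your formulas (the closed form for $\kappa_{i,j}$, the expressions for $Z$ and $T$, the vanishing of the constraint term, and the final dual objective) agree with the paper's. The one genuine gap is in how you form the Lagrangian. You state the constraint in the log form $T\geq -\alpha|x-y_j|+\log\sum_i e^{-u\kappa_{i,j}}\poii{kx}$ but then dualize its exponentiated version, putting the term $-\sum_x b_x e^{T+\alpha|x-y_j|}$ into $L$. With that choice $L$ is \emph{concave} in $T$ (its second derivative in $T$ is $-e^{T}\sum_x b_x e^{\alpha|x-y_j|}<0$), so the point you find by ``stationarity in $T$'' is a maximum along the $T$-direction, not a minimum; in fact $\inf_{T}L=-\infty$ for every $b\neq 0$, so the dual function of this Lagrangian is identically $-\infty$ and carries no information. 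Plugging the critical point back into $L$ therefore does not compute a valid dual bound, even though the numbers happen to come out right, and the discretization/Slater argument you sketch at the end cannot rescue a Lagrangian whose inner infimum is degenerate.

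The fix is small, and there are two standard ways to do it, the second being what the paper does. Either dualize the constraint in the log form you first wrote, so that $T$ enters $L$ linearly with coefficient $(1-u)\histp-\sum_x b_x$; the inner infimum is then $-\infty$ unless $\sum_x b_x=(1-u)\histp$, which is exactly the normalization you were extracting from your $T$-stationarity, and the remaining problem in $\kappa$ is convex. Or eliminate $T$ altogether: since $\kappa_{\cdot,j}$ is shift-invariant (Lemma~\ref{lem:unique}), normalize so the inner max equals $0$, convert it into the constraint $1\geq e^{-\alpha|x-y_j|}\sum_i e^{-u\kappa_{i,j}}\poii{kx}$, and exponentiate the objective to $\sum_i e^{(1-u)\kappa_{i,j}}\poii{ky_j}$; the resulting Lagrangian is a positive combination of exponentials in $\kappa$, hence convex and genuinely minimized at its critical point. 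The paper takes this second route and recovers the normalization by explicitly optimizing over rescalings $\lambda_x\mapsto e^z\lambda_x$. Your remaining steps---the telescoping at the KKT point, the scale-invariance of Equation~\ref{eq:max-formulation} in $a_x$, and the substitution yielding Equation~\ref{eq:cij-expression}---are fine once the Lagrangian is repaired.
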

\begin{proof}
We first point out that because Equation~\ref{eq:merged-split} is invariant to shifts in $\kappa_{i,j}$ for each $j$, there is some shift of the optimal $\kappa_{i,j}$ that makes the inner max equal to 0. Thus the optimum is unchanged if we restrict the inner maximum to have value $\leq 0$ and remove this term from the objective. Exponentiating both the objective and the new constraints---and for now omitting the scaling factor $u \histp$ from the objective function, since it merely scales the optimum---yields

\begin{align}
&\min_{\kappa_{\cdot,j}}\left( \sum_i e^{(1-u) \kappa_{i,j}} \poii{ky_j}\right) \notag\\
&\textrm{ such that } \forall x,\; 1\geq e^{-\alpha|x-y_j|} \left(\sum_i e^{-u \kappa_{i,j}} \poii{kx}\right) \label{eq:split-with-constraint}
\end{align}
The dual of this convex optimization is then:
\begin{align}
\label{eq:split-Lagrange-dual}
\max_{\lambda_x\geq 0} \min_{\kappa_{i,j}} \left( \sum_i e^{(1-u) \kappa_{i,j}} \poii{ky_j}\right)+\sum_x \lambda_x \left(-1+e^{-\alpha|x-y_j|} \left(\sum_i e^{-u \kappa_{i,j}} \poii{kx}\right)\right)
\end{align}

To solve the inner minimization, we compute the derivative with respect to a single $\kappa_{i,j}$: \begin{align*}&(1-u)e^{(1-u)\kappa_{i,j}}\poii{k y_j}+\sum_x \lambda_x e^{-\alpha|x-y_j|} (-u)e^{-u\kappa_{i,j}}\poii{k x}\\
=& e^{(1-u)\kappa_{i,j}} (1-u)\poii{k y_j}+e^{-u\kappa_{i,j}}\sum_x \lambda_x e^{-\alpha|x-y_j|} (-u)\poii{k x}\end{align*}

Setting this to 0 yields $\kappa_{i,j}=\log \frac{u}{1-u}\sum_x \lambda_x e^{-\alpha|x-y_j|} \frac{\poii{k x}}{\poii{k y_j}}$

Plugging this into Equation~\ref{eq:split-Lagrange-dual} (to evaluate the dual problem), the objective function is:

\begin{align*}\hspace{3cm}&\hspace{-3cm}\left( \sum_i \left(\frac{u}{1-u}\sum_x \lambda_x e^{-\alpha|x-y_j|} \frac{\poii{k x}}{\poii{k y_j}}\right)^{1-u} \poii{ky_j}\right)\\&+\sum_x \lambda_x \left(-1+e^{-\alpha|x-y_j|} \left(\sum_i \left(\frac{u}{1-u}\sum_x \lambda_x e^{-\alpha|x-y_j|} \frac{\poii{k x}}{\poii{k y_j}}\right)^{-u} \poii{kx}\right)\right)\end{align*}
which simplifies to

\[\sum_i \frac{1}{u}\left(\frac{u}{1-u}\sum_x \lambda_x e^{-\alpha|x-y_j|} \poii{k x}\right)^{1-u} \poii{ky_j}^u-\sum_x\lambda_x\]

Since this expression must be maximized in terms of $\lambda_x$, it must in particular be maximized with respect to scalings of $\lambda_x$. Replacing $\lambda_x$ with $e^z\lambda_x$ for some parameter $z$, we calculate that the maximum over $z$ of this expression equals
\[\left(\frac{\sum_i \left(\sum_x \lambda_x e^{-\alpha|x-y_j|} \poii{k x}\right)^{1-u} \poii{ky_j}^u}{\left(\sum_x \lambda_x\right)^{1-u}}\right)^{\frac{1}{u}}\]

For convenience, we reexpress $\lambda_x$ as $a_x e^{\alpha|x-y_j|}$, where for each $x$, we have $a_x$ is nonzero if and only if $\lambda_x$ is nonzero. Taking the logarithm of this, and multiplying back by the factor $u \histp$ that we dropped earlier yields that the inner minimization of the $j^\textrm{th}$ term in Equation~\ref{eq:merged-split} equals the maximization

\[\max_{a_x\geq 0} \histp\log\frac{\sum_i \left(\sum_x a_x \poii{k x}\right)^{1-u}\poii{k y_j}^u}{\left(\sum_x a_x e^{\alpha|x-y_j|}\right)^{1-u}}\]
as claimed. Finally, we point out that, defining $\kappa_{i,j}$ in terms of $a_x$ yields $\kappa_{i,j}=\log\frac{u}{1-u}\sum_x a_x\frac{\poii{k x}}{\poii{k y_j}}$; since $\kappa_{i,j}$ in Equation~\ref{eq:merged-split} is shift-invariant, and $a_x$ is scale-invariant (since we have rescaled by the optimal scaling factor $e^z$), we can drop the $\frac{u}{1-u}$ factor without affecting anything, yielding Equation~\ref{eq:cij-expression}.
\end{proof}


The above expression for the optimal values $\kappa_{i,j}$ can be viewed as representing $\kappa_{i,j}$, for any fixed $j$, as the sum of exponentials in $i$:
\begin{equation}\label{eq:cij-expression2}
\kappa_{i,j}=\log\sum_x a_x\frac{ \poii{kx}}{\poii{ky_j}}=\log\sum_x a_x e^{k(y_j-x)}(x/y_j)^i\end{equation}

We will now show a crucial property of the hardest distribution $Q$ with respect to $\kappa_{i, j}$: each distinct probability mass $y_j$ in $P$ is replaced by \emph{exactly} two probability masses, which we denote as $x_{1,j}$ and $x_{2,j}$. This property is known to be the worst case for $\eps$-far uniformity testing for both the TV and collision tester \cite{DGPP18}. In the context of Equation~\ref{eq:cij-expression}, for any fixed $j$, we would expect that exactly two $a_x$'s are non-zero, which is formally shown below in Lemma~\ref{lem:two-point}. En route to this lemma, we will show an additional property of $\boldsymbol{\kappa}$ as well as a technical lemma to aid with the final analysis of the two-point structure.

\begin{lemma}\label{lem:c-convex}
The optimal value of $\kappa_{i,j}$ given by Equation~\ref{eq:cij-expression} is convex as a function of $i$ (where Equation~\ref{eq:cij-expression} applies, by Lemma~\ref{lem:closed-form-kij}, if $u\in(0,1)$\,).
\end{lemma}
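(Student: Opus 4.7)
The plan is to reduce the claim to the standard fact that a log-sum-exp function is convex in its argument. Starting from the closed form given in Equation~\ref{eq:cij-expression2},
\[\kappa_{i,j}=\log\sum_x a_x e^{k(y_j-x)}(x/y_j)^i,\]
I would first bundle the $i$-independent factor by setting $b_x := a_x e^{k(y_j-x)}\geq 0$, and then rewrite the $i$-dependent factor as an exponential by setting $r_x := x/y_j>0$ and noting $r_x^i = e^{i\log r_x}$. This rewrites
\[\kappa_{i,j}=\log\sum_x b_x\, e^{(\log r_x)\, i},\]
which is exactly the (shifted) log-moment-generating function of the nonnegative measure assigning mass $b_x$ to the point $\log r_x$, viewed as a function of the dual variable $i$.

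The second step is just to invoke convexity of this log-moment-generating function. I would either cite the standard fact directly, or, if a self-contained argument is preferred, compute the second derivative of $f(i) := \log M(i)$ where $M(i) := \sum_x b_x e^{(\log r_x) i}$, obtaining
\[f''(i) = \frac{M(i)\,M''(i)-(M'(i))^2}{M(i)^2}.\]
The numerator is nonnegative by the Cauchy--Schwarz inequality applied to the nonnegative weights $w_x := b_x e^{(\log r_x)i}$ and the function $\log r_x$: namely, $(\sum_x w_x \log r_x)^2 \le (\sum_x w_x)(\sum_x w_x (\log r_x)^2)$, i.e.\ $(M'(i))^2 \le M(i)\,M''(i)$. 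Thus $f''(i)\ge 0$ and $\kappa_{i,j}$ is convex in $i$.

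There is essentially no hard step here; the only care needed is that Lemma~\ref{lem:closed-form-kij} gives the representation of $\kappa_{i,j}$ only when $u\in(0,1)$, which matches the hypothesis of the present lemma, and that the coefficients $b_x$ are nonnegative because $a_x\geq 0$ by construction in Equation~\ref{eq:max-formulation}. The denominator $M(i)$ is strictly positive (assuming at least one $a_x$ is nonzero, which must hold at optimum, else $\kappa_{i,j}=\log 0$ is not well defined), so $f''(i)$ is well defined for all real $i$, and hence for the integer values of $i$ relevant to the semilinear tester.
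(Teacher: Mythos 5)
Your proposal is correct and follows essentially the same route as the paper: rewrite $\kappa_{i,j}$ via Equation~\ref{eq:cij-expression2} as the log of a nonnegative combination of exponentials in $i$, and conclude convexity from the fact that such a log-sum-exp (equivalently, a sum of log-convex functions) is log-convex. Your Cauchy--Schwarz computation is just a self-contained verification of the standard fact the paper cites, so there is nothing further to add.
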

\begin{proof}
We reexpress Equation~\ref{eq:cij-expression} as Equation~\ref{eq:cij-expression2}, and point out that exponential functions are log-convex, and sums of log-convex functions are log-convex, yielding that $\kappa_{i,j}$ is a convex function of $i$.    
\end{proof}

\begin{lemma}\label{lem:log-concave-mixture}
Suppose $g:\mathbb{Z}^+\rightarrow \mathbb{R}$ is log-concave. Then $\sum_{k=0}^\infty g(k) poi(\lambda,k)$ is a log-concave function of $\lambda$ for $\lambda\geq 0$; and if $g$ is strictly log-concave somewhere, then this function is strictly log-concave for all $\lambda>0$.
\end{lemma}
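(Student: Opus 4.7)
The plan is to differentiate $f(\lambda) := \sum_{k \geq 0} g(k)\,\poi{\lambda}{k}$ twice, recognize that log-concavity of $f$ is equivalent to a moment inequality on two independent Poissons, and then reduce the latter to a binomial inequality via Poissonization which I can attack by a symmetrization argument. Termwise differentiation via $\tfrac{d}{d\lambda}\poi{\lambda}{k} = \poi{\lambda}{k-1} - \poi{\lambda}{k}$ (with $\poi{\lambda}{-1}:=0$) yields, for $K \sim \poi{\lambda}{\cdot}$,
\[f'(\lambda) = \Exp[g(K+1) - g(K)], \qquad f''(\lambda) = \Exp[g(K+2) - 2g(K+1) + g(K)],\]
and expanding $(f')^2 - f f''$ causes the cross terms to cancel, leaving the clean identity
\[(f'(\lambda))^2 - f(\lambda)f''(\lambda) = \Exp[g(K+1)]^2 - \Exp[g(K)]\,\Exp[g(K+2)].\]
Hence it suffices to show, for independent copies $K,K' \sim \poi{\lambda}{\cdot}$, that $\Exp[g(K+1)g(K'+1)] \geq \Exp[g(K)g(K'+2)]$.

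For this key inequality I would condition on $K+K' = n$, using the standard Poissonization identity $K \mid (K+K' = n) \sim \mathrm{Bin}(n, 1/2)$. After reindexing so that in each term the two $g$-arguments sum to $n+2$, the conditional inequality reduces to
\[S_n \;:=\; \sum_{m=0}^{n+1} g(m)\,g(n+2-m)\left[\binom{n}{m-1} - \binom{n}{m}\right] \;\geq\; 0,\]
with the convention $\binom{n}{-1} = \binom{n}{n+1} := 0$. Applying the involution $m \mapsto n+1-m$ preserves the $g$-argument sum but negates the binomial bracket; averaging $S_n$ with its image under this involution gives the symmetric form
\[S_n \;=\; \tfrac{1}{2}\sum_m \bigl[g(m)g(n+2-m) - g(m+1)g(n+1-m)\bigr]\bigl[\tbinom{n}{m-1} - \tbinom{n}{m}\bigr].\]

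The main step, where all the hypotheses combine, is verifying each summand above is nonnegative. For $m \leq n/2$ the pair $(m+1, n+1-m)$ is at least as tight around the common mean $(n+2)/2$ as $(m, n+2-m)$, so log-concavity of $g$ forces $g(m)g(n+2-m) \leq g(m+1)g(n+1-m)$, while unimodality of $\binom{n}{\cdot}$ around $n/2$ gives $\binom{n}{m-1} \leq \binom{n}{m}$; both brackets are nonpositive, so the product is nonnegative. For $m \geq n/2+1$ both inequalities flip in tandem, and the boundary case $m = (n+1)/2$ (for odd $n$) makes the $g$-bracket vanish identically. Summing these nonnegative conditional contributions against the positive weights $\Pr(K+K'=n) = \poi{2\lambda}{n}$ then yields the moment inequality and hence log-concavity of $f$.

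For the strict claim, if $g(i)^2 > g(i-1)g(i+1)$ for some $i \geq 1$, then at $n = 2i-2$ and $m = i$ both brackets are strictly nonzero, producing a strictly positive summand; since $\poi{2\lambda}{2i-2} > 0$ for all $\lambda > 0$, strict log-concavity follows throughout. The main obstacle I anticipate is that the pointwise inequality $g(K+1)g(K'+1) \geq g(K)g(K'+2)$ is genuinely false (e.g., for $g = \mathbbm{1}[k=i_0]$ at $(K,K') = (i_0, i_0-2)$ it becomes $0 \geq 1$), so conditioning on the Poissonized sum is essential: it is really the unimodality of the binomial coefficients pairing with the ``spread vs.\ tight'' ordering from log-concavity that makes the signs align.
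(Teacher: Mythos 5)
Your proof is correct and is essentially the paper's argument in probabilistic clothing: the paper also reduces $(f')^2-ff''$ to a double sum over two independent "copies" (there written as a power series in $\lambda^{j+k}/(j!k!)$ rather than a conditioning on $K+K'=n$), symmetrizes by swapping the two indices, and concludes term-by-term because the $g$-difference bracket and the combinatorial-weight bracket change sign at the same place, with the same choice of indices for the strictness claim. No gap; the only difference is presentation.
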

\begin{proof}
We instead work with the function $f(\lambda)=e^{\lambda}\sum_{k=0}^\infty g(k) poi(\lambda,k)=\sum_{k=0}^\infty g(k)\lambda^{k}\frac{1}{k!}$, which differs from the desired function only in the $e^{\lambda}$ factor, and thus is log-concave exactly when the original function is.

We have $f'(\lambda)=\sum_{k=0}^\infty g(k+1)\lambda^{k}\frac{1}{k!}$ and $f''(\lambda)=\sum_{k=0}^\infty g(k+2)\lambda^{k}\frac{1}{k!}$. We note that we can extend the sum down to, say, $k=-1$ under the convention that $(-1)!=\infty$.

The condition for log-concavity is: $f'(\lambda)^2-f''(\lambda)f(\lambda)\geq 0$. Writing this out as a double sum: \[f'(\lambda)^2-f''(\lambda)f(\lambda)=\sum_{j,k=0}^\infty  (g(j+1)g(k+1)-g(j+2)g(k))\lambda^{j+k}\frac{1}{j!k!}\]

Replacing $j$ by $j-1$ yields the identical sum

\[\sum_{j=1}^\infty\sum_{k=0}^\infty  (g(j)g(k+1)-g(j+1)g(k))\lambda^{j+k-1}\frac{1}{(j-1)!k!}\]

Adding this sum to the corresponding sum with $j,k$ swapped (where we symmetrize the domain of the sum to the set of integers $j,k\geq 0$ where $j+k\geq 1$, which does not change the summation):

\[\sum_{\substack{j,k\geq 0\\j+k\geq 1}} \lambda^{j+k-1} (g(j)g(k+1)-g(j+1)g(k))\left(\frac{1}{(j-1)!k!}-\frac{1}{(k-1)!j!}\right)\]

We now point out that this sum is nonnegative term-by-term: the expression $g(j)g(k+1)-g(j+1)g(k)$ is nonnegative for log-concave $g$ when $j\geq k$, and symmetrically, nonpositive when $j\leq k$; the term $\left(\frac{1}{(j-1)!k!}-\frac{1}{(k-1)!j!}\right)$ is also nonnegative when $j\geq k$ and nonpositive when $j\leq k$; hence their product is always nonnegative, meaning the overall log-concavity condition is the sum of nonnegative terms, and is thus nonnegative, as desired.

To show strict log-concavity, we point out that the term $\left(\frac{1}{(j-1)!k!}-\frac{1}{(k-1)!j!}\right)$ is strictly positive whenever $j>k$; and thus if there is a location $\ell\geq 1$ (namely in the interior of $\mathbb{Z}^+$) such that $g(\ell+1)^2>g(\ell)g(\ell+2)$ then, letting $j=\ell$ and $k=\ell-1$ yields $g(j)g(k+1)-g(j+1)g(k)>0$ and hence the overall log-concavity expression is strictly positive, multiplied by some power of $\lambda>0$.
\end{proof}

We are now ready to prove the two-point structure of $q$.

\begin{lemma}
    \label{lem:two-point}
    For any fixed $\alpha<0,u\in(0,1)$, and fixed $j$, the inner minimization of Equation~\ref{eq:merged-split} will have have a solution $\kappa_{i,j}$ that, when expressed via Equation~\ref{eq:cij-expression}, has exactly two $x$ such that $a_x \neq 0$, which we denote as $x_{1,j}<x_{2,j}$, and which satisfy $x_{1,j}<y_j<x_{2,j}$. We normalize so that we express $a_{x_{1,j}}=q_j$ and $a_{x_{2,j}}=1-q_j$, for some $q_j\in(0,1)$---taking advantage of the fact that Equation~\ref{eq:merged-split} is invariant to shifting $\kappa_{\cdot,j}$, and thus we can scale $a_x$ arbitrarily.
\end{lemma}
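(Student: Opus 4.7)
The plan is to analyze the first-order (KKT) optimality conditions for the maximization over $a_x\geq 0$ in Equation~\ref{eq:max-formulation}, and combine them with the log-concavity machinery of Lemma~\ref{lem:log-concave-mixture} to pin down the support of $a$. Differentiating the log of the objective of Equation~\ref{eq:max-formulation} with respect to each $a_x$, the KKT conditions at an optimum become: there is a constant $C>0$ such that
\[ \Theta(x) \;:=\; e^{-\alpha|x-y_j|}\sum_{i} \rho_i\, \poi{kx}{i} \;\leq\; C \]
for every $x\geq 0$, with equality at each $x$ in the support of $a$, where $\rho_i \propto b_i(a)^{-u}\poii{ky_j}^{u}$ and $b_i(a)=\sum_{x'}a_{x'}\poi{kx'}{i}$.

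The next step is to show that $\rho$, as a sequence in $i$, is log-concave. Setting $B_i := i!\, b_i(a) = \sum_{x'} a_{x'} e^{-kx'}(kx')^i$ and viewing $B_i$ as a sum of exponential functions of $i$ with nonnegative coefficients, Cauchy--Schwarz gives $B_i^2 \leq B_{i-1}B_{i+1}$, i.e., $B_i$ is log-convex in $i$. Combining with the Poisson identity $\poii{ky_j}^2/(\poi{ky_j}{i-1}\poi{ky_j}{i+1}) = (i+1)/i$ and $b_i=B_i/i!$ yields $\rho_{i-1}\rho_{i+1}/\rho_i^2 \leq 1$, so $\rho$ is log-concave. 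Lemma~\ref{lem:log-concave-mixture} then implies $\Phi(x) := \sum_i \rho_i\,\poi{kx}{i}$ is log-concave in $x$; since $\log\Theta(x) = \log\Phi(x)-\alpha|x-y_j|$ differs from $\log\Phi$ by a function linear on each of $[0,y_j]$ and $[y_j,\infty)$, $\log\Theta$ is concave on each of these intervals. If two distinct support points lay in the same open interval, concavity plus the saturation $\Theta=C$ would force $\log\Theta$ to equal $\log C$ on the entire segment between them; but $\Phi$ is a nonconstant analytic function on each open half, so this is impossible. Hence the support contains at most one point in each of $(0,y_j)$ and $(y_j,\infty)$.

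To conclude the support is exactly $\{x_{1,j},x_{2,j}\}$ with $x_{1,j}<y_j<x_{2,j}$, I would rule out the remaining degenerate supports. If $y_j$ were in the support, $\Theta(y_j)=C$ must be the maximum, which by concavity on each side would force the left derivative of $\log\Theta$ at $y_j$ to be $\geq 0$ and the right derivative to be $\leq 0$; but these two one-sided derivatives differ by $-2\alpha=2|\alpha|>0$, contradiction. A support confined to $\{x_1\}$ with $x_1 < y_j$ gives $\rho_i \propto (y_j/x_1)^{ui}$, a geometric sequence of ratio $>1$, so $\Phi(x)\propto e^{kx((y_j/x_1)^u - 1)}$ grows exponentially, and then $\Theta$ grows unboundedly on $[y_j,\infty)$, violating $\Theta\leq C$. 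A support confined to $\{x_2\}$ with $x_2>y_j$ makes $\log\Theta$ affine (not strictly concave) on $[y_j,\infty)$; the three sub-cases (negative, positive, or zero slope) respectively place the max at $y_j$, at $\infty$, or yield $\Theta$ constant on $[y_j,\infty)$, and in the zero-slope case an explicit computation on $[0,y_j]$ gives $\Theta(0)=e^{|\alpha|y_j}>e^{-|\alpha|y_j}=C$. All cases contradict the KKT inequality. Since $y_j$ is excluded and at most one point per open half is allowed, the only remaining possibility is one support point strictly on each side. The normalization $a_{x_{1,j}}=q_j$, $a_{x_{2,j}}=1-q_j$ is by scale-invariance of $a$ (equivalently, the shift-invariance of $\kappa_{\cdot,j}$ from Lemma~\ref{lem:unique}).

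The main obstacle is the case analysis excluding the degenerate supports; the at-most-two-points portion is a clean consequence of the log-concavity of $\rho$, which itself follows from the near-cancellation between the Cauchy--Schwarz log-convexity of $B_i$ and the exact Poisson ratio identity.
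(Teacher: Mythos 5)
Your proposal follows essentially the same route as the paper: your KKT condition $\Theta(x)\le C$ with equality on the support of $a$ is exactly the paper's complementary-slackness condition $\alpha|x-y_j|+\gamma_j\ge\log\sum_i e^{-u\kappa_{i,j}}\poii{kx}$ (with $C=e^{\gamma_j}$), your log-concavity of $\rho_i=e^{-u\kappa_{i,j}}$ is Lemma~\ref{lem:c-convex} rederived via Cauchy--Schwarz, and both arguments then feed Lemma~\ref{lem:log-concave-mixture} into an ``at most one tangency per branch of the V'' argument plus a case analysis of degenerate supports.

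One step as written does not hold up. To exclude two support points in the same open half you invoke only the \emph{non-strict} log-concavity of $\Phi$, conclude $\log\Theta\equiv\log C$ on the segment between them, and dismiss this because ``$\Phi$ is a nonconstant analytic function.'' But $\Theta\equiv C$ on a segment forces $\Phi(x)=Ce^{\alpha|x-y_j|}$ there, and a nonconstant analytic function can perfectly well agree with an exponential on a segment---nonconstancy is not the obstruction. The repair is available from tools you already cite: two distinct support points make $B_i$ a sum of two exponentials with different bases, hence \emph{strictly} log-convex, hence $\rho$ strictly log-concave somewhere, and the strictness clause of Lemma~\ref{lem:log-concave-mixture} then gives strict concavity of $\log\Theta$ on each half, which admits at most one maximizer per open half. (Alternatively, finish the analyticity argument: $\Phi$ exponential on a segment extends to $\Phi$ exponential on the whole ray, forcing $\rho$ geometric and hence a one-point support, contradicting the assumption of two.) Two smaller omissions: your single-support-point analysis with $\rho_i\propto(y_j/x_{1,j})^{ui}$ breaks down at $x_{1,j}=0$ (the paper excludes this case by noting $\kappa_{i,j}=-\infty$ for $i\ge1$ renders the primal constraint infeasible), and the empty-support case should be dispatched explicitly. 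With these repairs the argument is complete.
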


\begin{proof}
If $\kappa_{i,j}$ is exactly linear as a function of $i$ (for our fixed $j$), namely $\kappa_{i,j}=a +b i$, then, expressing $\kappa_{i,j}$ in the form of Equation~\ref{eq:cij-expression2}, there must be a single nonzero $a_x$.



Otherwise, since by Lemma~\ref{lem:c-convex}, $\kappa_{i,j}$ is a convex function of $i$, then $\kappa_{i,j}$ must be \emph{strictly} convex (since we already covered the case where it is linear). Thus by Lemma~\ref{lem:log-concave-mixture}, we have that the expression $\log \sum_i e^{-u \kappa_{i, j}} \poii{kx}$ in the inner max of Equation~\ref{eq:merged-split} is a strictly concave function of $x$; and thus must intersect with each branch of the absolute value function at most once, namely at most twice in total.

Thus, considering the dual form of Equation~\ref{eq:merged-split} analyzed in the proof of Lemma~\ref{lem:closed-form-kij}, by complementary slackness the dual variable $\lambda_x$ must be nonzero in at most 2 locations. And by definition of $a_x$, it is nonzero if and only if the corresponding $\lambda_x$ is.

Thus in all cases, the maximization $\max_x -\alpha|x-y_j|+ \log \sum_i e^{-u \kappa_{i, j}} \poii{kx}$ is tight for at most 2 points, with at most one intersection point per branch of $\alpha|x-y_j|$. Thus we call $x_{1,j}$ the (possible) intersection point below $y_j$ and call $x_{2,j}$ the (possible) intersection point above $y_j$. 

If there are no intersections, than $a_x=0$ everywhere, and $\kappa_{i,j}=-\infty$ and the expression $\log\sum_i e^{-u \kappa_{i,j}}\poii{kx}$ is infinite (since $t<0$) and thus the optimization of Equation~\ref{eq:split-with-constraint} is infeasible, so this cannot happen. If there is one intersection, then, there is at most one nonzero $a_x$ (by complementary slackness), and thus $\kappa_{i,j}$ is linear in $i$ by Equation~\ref{eq:cij-expression2}. If $\kappa_{i,j}$ is linear in $i$ then $\sum_i e^{-u \kappa_{i, j}} \poii{kx}$ is linear in $x$. Now, if a linear function is bounded by the constraint $\alpha|x-y_j|+\beta$ and intersects it once, the intersection point must be at $x=0$. Thus the only nonzero $a_x$ can be $a_0$. This leads to $\kappa_{i,j}=\log a_0 e^{k(y_j-0)}(0/y_j)^i$, which when $i\geq 1$ is $\log 0=-\infty$. As above, the expression $\log\sum_i e^{-u \kappa_{i,j}}\poii{kx}$ is thus infinite (since $u>0$) and thus the constraint of Equation~\ref{eq:split-with-constraint} is infeasible, so this cannot happen.

Thus it must be that we have two intersection points, $x_{1,j}$ and $x_{2,j}$. Namely, $\alpha|x-y_j|+\gamma_j=\sum_i e^{-u \kappa_{i, j}} \poii{kx}$ at both $x=x_{1,j}$ and $x=x_{2,j}$. Neither $x_{1,j}$ nor $x_{2,j}$ can equal $y_j$ since $\sum_i e^{-u \kappa_{i, j}} \poii{kx}$ is smooth, and thus the maximum of this smooth function plus the ``v-shaped'' function $-\alpha|x-y_j|$ cannot occur at the corner of the ``v'' (since $\alpha<0$).
\end{proof}

We next show, via a relatively straightforward calculation, that we can explicitly find a negative objective function value for our optimization problem, hence implying that the global optimum value is $<0$. We will then use this to rule out certain pathological behavior that could occur when certain variables equal 0.

\begin{lemma}
\label{lem:negative-opt}
The optimum value of Equation~\ref{eq:merged-split} is $<0$, attained in the limit as $\alpha\rightarrow 0$ from the negative side, and setting $u = \frac{1}{2}$, $\tau = \left(-\alpha\frac{y_j^2}{u k^2}\right)^{\frac{1}{3}}$, $x_{1,j}=y_j-\tau$, $x=y_j+\tau$, and $\kappa_{i, j} = \log \frac{\poii{kx_{1,j}} + \poii{kx_{2,j}}}{2 \poii{ky_j}}$.
\end{lemma}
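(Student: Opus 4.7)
The plan is to substitute the specified values directly into Equation~\ref{eq:merged-split} and show, via Taylor expansion in $\tau$ (which tends to $0$ as $\alpha\to 0^-$, since $\tau\propto(-\alpha)^{1/3}$), that the resulting objective is strictly negative. Setting $A_i=\poii{kx_{1,j}}$, $B_i=\poii{kx_{2,j}}$, $C_i=\poii{ky_j}$, and $f_i=(A_i+B_i)/(2C_i)$, the prescribed $\kappa_{i,j}=\log f_i$ satisfies $e^{\pm\kappa_{i,j}/2}=f_i^{\pm 1/2}$. With $u=1/2$ the objective becomes $\tfrac{\eps\alpha}{2}+\sum_j\tfrac{\histp}{2}[L_j+M_j]$, where $L_j=\log\sum_i C_i\sqrt{f_i}$ and $M_j=\max_{x\ge 0}(-\alpha|x-y_j|+g(x))$ with $g(x)=\log\sum_i\poii{kx}/\sqrt{f_i}$.

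The first step is to Taylor-expand both $L_j$ and $g(x)$ in $\tau$. Writing $A_i/C_i=e^{k\tau}(1-\tau/y_j)^i$ and $B_i/C_i=e^{-k\tau}(1+\tau/y_j)^i$ gives $f_i=e^{-\gamma_i\tau^2/2}\cosh(\beta_i\tau)$, where $\beta_i=k-i/y_j$ and $\gamma_i=i/y_j^2$. The key Poisson identities are $\Exp_C[\beta_i^2-\gamma_i]=0$ and $\Exp_C[(\beta_i^2-\gamma_i)^2]=2k^2/y_j^2$, computed from standard Poisson central moments through order $4$. Since $\sum_i C_i f_i=1$ exactly, expanding $\sqrt{f_i}=1+(f_i-1)/2-(f_i-1)^2/8+\cdots$ yields $L_j=-k^2\tau^4/(16y_j^2)+O(\tau^6)$. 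A parallel expansion at general $x$, using $\Exp_{\Poi(kx)}[\beta_i^2-\gamma_i]=k^2(y_j-x)^2/y_j^2$, produces $g(x)\approx 3k^2\tau^4/(16y_j^2)-k^2\tau^2(y_j-x)^2/(4y_j^2)$; in particular $g(y_j)=3k^2\tau^4/(16y_j^2)+O(\tau^6)$ and $g''(y_j)=-k^2\tau^2/(2y_j^2)$ to leading order.

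The second step is to evaluate $M_j$. By Lemma~\ref{lem:log-concave-mixture}, $g(x)$ is concave in $x$ (the weights $1/\sqrt{f_i}$ are log-concave in $i$ since $f_i$ is a sum of exponentials, hence log-convex), so on each side of $y_j$ the function $-\alpha|x-y_j|+g(x)$ is concave with a unique critical point where $g'(x)=\mp\alpha$. Solving $k^2\tau^2(y_j-x)/(2y_j^2)=-\alpha$ gives $y_j-x=-2\alpha y_j^2/(k^2\tau^2)=\tau$---that is, $x=y_j-\tau=x_{1,j}$ by our choice of $\tau$, and symmetrically $x=x_{2,j}$ on the other side. Plugging in, $M_j=-\alpha\tau+g(x_{1,j})=-\alpha\tau-k^2\tau^4/(16y_j^2)+O(\tau^6)$.

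Substituting $\tau^3=-2\alpha y_j^2/k^2$ converts $k^2\tau^4/(16y_j^2)$ into $-\alpha\tau/8$, so $L_j=\alpha\tau_j/8$, $M_j=-7\alpha\tau_j/8$, and $L_j+M_j=-3\alpha\tau_j/4$. Summing, the overall objective equals $\tfrac{\eps\alpha}{2}-\tfrac{3\alpha}{8}\sum_j\histp\tau_j+O(\alpha^2)$; with $\alpha<0$ this is $-\tfrac{\eps|\alpha|}{2}+\tfrac{3\cdot 2^{1/3}}{8}|\alpha|^{4/3}k^{-2/3}\sum_j\histp y_j^{2/3}+O(\alpha^2)$. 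Since the linear term $-\Theta(|\alpha|)$ dominates the $|\alpha|^{4/3}$ term for small $|\alpha|$, the objective is strictly negative and tends to $0$ from below as $\alpha\to 0^-$. The main technical obstacle is the critical-point analysis of step two; it is made tractable precisely because $\tau=(-2\alpha y_j^2/k^2)^{1/3}$ has been chosen so that the critical point $g'(x)=-\alpha$ coincides with $y_j-\tau$ to leading order---a self-consistency condition tying together the specific choices of $\tau$, $x_{1,j}$, $x_{2,j}$, and $\kappa_{i,j}$.
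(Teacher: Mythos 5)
Your proposal is a legitimate and essentially complete route to this lemma; in fact the paper itself contains no proof of Lemma~\ref{lem:negative-opt} (the draft proof in the source is commented out and explicitly unfinished), so there is no ``paper proof'' to match against. Your strategy --- plug the prescribed $\alpha,u,\tau,x_{1,j},x_{2,j},\kappa_{i,j}$ into Equation~\ref{eq:merged-split} as a feasible point, expand in $\tau\propto(-\alpha)^{1/3}$, and observe that the linear term $\eps\alpha/2<0$ dominates the $O(|\alpha|^{4/3})$ contributions --- is exactly what the lemma statement is set up for, and your leading-order computations check out: the Poisson identities $\Exp_C[\beta_i^2-\gamma_i]=0$ and $\Exp_C[(\beta_i^2-\gamma_i)^2]=2k^2/y_j^2$ are correct, as is the self-consistency $g'(x)=-\alpha$ at $x=y_j\mp\tau$ forced by the choice $\tau^3=2|\alpha|y_j^2/k^2$. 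Two points deserve more care in a full write-up. First, the Taylor remainders in your expansions of $\sqrt{f_i}$ and $f_i^{-1/2}$ grow with $i$ (e.g.\ through powers of $\beta_i=k-i/y_j$), so summing them against Poisson weights requires a uniform tail argument (split the sum at $i\sim 1/\tau$ and use that $f_i^{-1/2}$ decays geometrically); the final conclusion only needs $L_j+M_j=O(|\alpha|^{4/3})$, so you could also shortcut $L_j$ entirely via Jensen, since $\sum_i C_i f_i=1$ gives $L_j\le 0$ exactly. Second, $M_j$ is a \emph{global} max over $x\ge 0$, and the quadratic approximation of $g$ is only local near $y_j$; you should explicitly verify that $-\alpha|x-y_j|+g(x)\to-\infty$ as $x\to\infty$ (which holds because $g$ decays with asymptotic slope $\approx -k\tau/(2y_j)$, and $k\tau/(2y_j)\gg|\alpha|$ for small $|\alpha|$), so that concavity on each branch (correctly obtained from Lemma~\ref{lem:log-concave-mixture}, since $f_i^{-1/2}$ is log-concave in $i$) localizes the maximizer near $y_j\mp\tau$ where the expansion applies. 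With those remainders controlled, the argument is sound.
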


The final result of this section, and of the upper-bound portion of our analysis, puts together the pieces we have shown so far to summarize the properties of the optimum of Equation~\ref{eq:merged-split}---and hence also Equation~\ref{eq:relaxed}. We use the structural properties of the upper bound to reexpress our optimal objective function value, via Equation~\ref{eq:opt-reformulated}, in a form that will show up crucially in our lower bound, in Lemma~\ref{lem:lower-matching-with-3-factors}.

\begin{lemma}\label{lem:upper-final}
When Equation~\ref{eq:merged-split} is optimized for $\alpha,u,\kappa_{i,j}$, then $\alpha<0$, $u\in(0,1)$, and (the optimal) $\kappa_{i,j}$ can be expressed via Lemma~\ref{lem:two-point} as $\kappa_{i,j}=\log \frac{ q_j\poii{k x_{1,j}}+(1-q_j)x_{2,j}}{ \poii{ky_j}}$, and its value equals \begin{equation}\label{eq:opt-reformulated}\eps\alpha(1-u)+\sum_j \histp\log \frac{\sum_i (q_j \poii{kx_{1,j}}+(1-q_j)\poii{kx_{2,j}})^{1-u}\poii{ky_j}^{u}}{(q_j e^{\alpha|x_{1,j}-y_j|}+(1-q_j) e^{\alpha|x_{2,j}-y_j|})^{1-u}}\end{equation}
which by Lemmas~\ref{lem:dual-manipulation},~\ref{lem:relaxed-bound-tight}, and~\ref{lem:merged} equals Equation~\ref{eq:relaxed}.
Further, for these $\alpha,u,q_j,x_{1,j},x_{2,j}$, the derivative of Equation~\ref{eq:opt-reformulated} with respect to $\alpha$ or $u$ or any $q_j$ is 0.
\end{lemma}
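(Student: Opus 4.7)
The proof proceeds by assembling the structural results already established in the section. First, I would combine Lemmas~\ref{lem:alpha-non-zero} and~\ref{lem:negative-opt} to deduce $\alpha < 0$: Lemma~\ref{lem:negative-opt} exhibits an explicit feasible point with strictly negative objective, so at the optimum we cannot be in the ``$\alpha = 0$'' branch of Lemma~\ref{lem:alpha-non-zero}. To obtain $u \in (0,1)$, I would check the two boundary cases directly in Equation~\ref{eq:merged-split}: at $u = 0$ the first summand drops out and the inner max becomes $-\alpha|x - y_j| + \log \sum_i \poii{kx}$, which since $\sum_i \poii{kx} = 1$ and $\alpha < 0$ is unbounded above in $x$; at $u = 1$ the second summand has coefficient $(1-u) = 0$ and the first summand reduces to $\histp \log \sum_i \poii{ky_j} = 0$, giving a total objective of $0$, contradicting Lemma~\ref{lem:negative-opt}.

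Second, with $u \in (0,1)$, Lemma~\ref{lem:closed-form-kij} applies to each inner minimization and rewrites the $j$-th subproblem as Equation~\ref{eq:max-formulation}. Lemma~\ref{lem:two-point} then forces the optimizer $a_x$ to be supported on exactly two points $x_{1,j} < y_j < x_{2,j}$. Exploiting the scale-invariance of $a_x$ (equivalently, the shift-invariance of $\kappa_{\cdot,j}$), I would normalize $a_{x_{1,j}} = q_j$ and $a_{x_{2,j}} = 1 - q_j$. Plugging this two-point support into Equation~\ref{eq:cij-expression} gives the displayed formula for $\kappa_{i,j}$, and plugging the same into Equation~\ref{eq:max-formulation} yields exactly the $j$-th term in the sum of Equation~\ref{eq:opt-reformulated}; adding back the $\eps\alpha(1-u)$ piece of Equation~\ref{eq:merged-split} completes the identification of the objective value. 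The equality with Equation~\ref{eq:relaxed} then follows by chaining Lemmas~\ref{lem:dual-manipulation}, \ref{lem:relaxed-bound-tight}, and~\ref{lem:merged}.

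Third, for the vanishing derivatives, I would argue that each of $\alpha$, $u$, and $q_j$ lies in the interior of its feasible region at the joint optimum, so that first-order optimality conditions apply to the smooth expression in Equation~\ref{eq:opt-reformulated}. We already have $\alpha < 0$ and $u \in (0,1)$ strictly. For $q_j \in (0,1)$, the cases $q_j = 0$ or $q_j = 1$ collapse $a_x$ onto a single point, which by the analysis in Lemma~\ref{lem:two-point} either corresponds to a linear $\kappa_{i,j}$ (ruled out there as infeasible when the support point is $0$, and otherwise incompatible with the strict log-concavity argument via Lemma~\ref{lem:log-concave-mixture} that underlies the two-point conclusion) or places the unique support point exactly where the concave function $\log \sum_i e^{-u\kappa_{i,j}}\poii{kx}$ meets only a single branch of $-\alpha|x-y_j|$, contradicting that both branches are tight by complementary slackness at a nondegenerate optimum. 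Given interiority, smoothness, and the fact that Equation~\ref{eq:opt-reformulated} equals the optimum of Equation~\ref{eq:merged-split}, the partial derivatives with respect to $\alpha$, $u$, and each $q_j$ vanish.

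The main obstacle I anticipate is the $q_j \in (0,1)$ step: Lemma~\ref{lem:two-point} was phrased for a fixed $\alpha,u$ subproblem, and I need to verify that the one-point degeneracy does not sneak back in at the \emph{joint} optimum over $(\alpha, u, \kappa)$. I would handle this by leveraging Lemma~\ref{lem:negative-opt}: its explicit construction has two distinct points $x_{1,j} = y_j - \tau$ and $x_{2,j} = y_j + \tau$ with nonzero weights, demonstrating that a strictly interior $q_j$ is feasible with negative objective, and combining this with the strict-convexity/complementary-slackness argument from Lemma~\ref{lem:two-point} rules out the collapsed cases at optimum. Once interiority is secured, the derivative conditions are immediate.
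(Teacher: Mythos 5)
Your proposal is correct and follows essentially the same route as the paper: rule out $\alpha=0$, $u=0$, and $u=1$ via Lemmas~\ref{lem:alpha-non-zero} and~\ref{lem:negative-opt}, invoke Lemmas~\ref{lem:closed-form-kij} and~\ref{lem:two-point} to get the two-point form and the objective value, chain Lemmas~\ref{lem:dual-manipulation}, \ref{lem:relaxed-bound-tight}, and~\ref{lem:merged}, and conclude the vanishing derivatives from interiority and smoothness. The only thing worth adding is that the envelope-theorem step for the $\alpha$ and $u$ derivatives (passing from the total derivative of the value function to the partial derivative of Equation~\ref{eq:opt-reformulated} at the chosen maximizer) relies on the uniqueness of the inner maximizer, which is exactly what Lemma~\ref{lem:unique} supplies and which the paper cites explicitly at this point.
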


\begin{proof}
We first show that we can apply Lemma~\ref{lem:two-point}. From Lemma~\ref{lem:negative-opt}, the objective function at optimum must be negative. We now point out that, since Equation~\ref{eq:merged-split} equals Equation~\ref{eq:relaxed-dual} (by Lemma~\ref{lem:merged}), then Lemma~\ref{lem:alpha-non-zero} yields that $\alpha<0$. Next, we point out that $u\neq 1$ because if $u=1$ then all the terms in Equation~\ref{eq:merged-split} vanish, leaving an objective of 0; but the objective is actually negative.
Finally, we point out that $u\neq 0$, since otherwise, if $u=0$, the inner minimization of Equation~\ref{eq:merged-split} contains the term $\max_{x\geq 0} -\alpha|x-y_j|$ which is infinite since $\alpha<0$, and hence contradicts the fact that the optimum is negative. Thus the conditions of Lemma~\ref{lem:two-point} apply, namely $\alpha<0$ and $u\in(0,1)$.

We can thus apply Lemma~\ref{lem:closed-form-kij} for each $j$, where $\kappa_{i,j}$ is reexpressed using the 2-point form of Lemma~\ref{lem:two-point}, and we use the other part of Lemma~\ref{lem:closed-form-kij} to say that the inner minimization of Equation~\ref{eq:merged-split} equals the inner maximization below, to conclude that Equation~\ref{eq:merged-split} equals 

\begin{equation}\label{eq:min-max}\min_{\alpha<0,u\in(0,1)} \left(\eps\alpha(1-u)+\sum_j \max_{q_j,x_{1,j},x_{2,j}} \histp\log \frac{\sum_i (q_j \poii{kx_{1,j}}+(1-q_j)\poii{kx_{2,j}})^{1-u}\poii{ky_j}^{u}}{(q_j e^{\alpha|x_{1,j}-y_j|}+(1-q_j) e^{\alpha|x_{2,j}-y_j|})^{1-u}}\right)\end{equation}

By Lemmas~\ref{lem:dual-manipulation},~\ref{lem:relaxed-bound-tight}, and~\ref{lem:merged} we have that Equation~\ref{eq:merged-split} equals Equation~\ref{eq:relaxed}, so thus Equation~\ref{eq:opt-reformulated} equals Equation~\ref{eq:relaxed}, as desired.

Since the function being optimized in Equation~\ref{eq:min-max} is smooth, and the inner maximization has a unique solution (by Lemma~\ref{lem:unique}), and the optimum is attained for $\alpha,u,q_j$ in the interior of their domains (for $q_j$ this is from Lemma~\ref{lem:two-point}), thus the derivatives of Equation~\ref{eq:opt-reformulated} with respect to any of $\alpha,u,q_j$ are all 0, as desired.
\end{proof}

\section{Lower bound}
\label{sec:lower}

Given a hypothesis distribution specified by $P$, and a number of samples $k$, we derive a particular distribution ``$Q$'' that is \emph{far} from the hypothesis $P$, yet \emph{hard to distinguish} from it, and we derive our lower bounds from this. If $Q$ is $\eps$-far from $P$ and no $k$-sample tester can distinguish $P$ from $Q$ with success probability $\geq 1-\delta$, then, a fortiori, no $k$-sample tester can distinguish $P$ from the entire set of distributions $\geq\eps$ distance from $P$, with success probability $\geq 1-\delta$.

Instead of providing a single distribution $Q$, we instead provide a \emph{distribution over distributions} $\mathcal{Q}_{[\eps,\eps']}$, each of whose members is itself $\geq\eps$-far from $P$, and such that no $\Poi(k)$-sample tester can distinguish $P$ from a random distribution from $\mathcal{Q}_{[\eps,\eps']}$ with expected success probability $\geq 1-\delta$.

Explicitly, for each domain element, we will flip a (weighted) coin between 2 probabilities; we call this a ``coin flip distribution"; we then condition on the $\ell_1$ distance of this distribution from our hypothesis $P$ being in the interval $[\eps,\eps']$, and call $\mathcal{Q}_{[\eps,\eps']}$ a ``conditional coin flip distribution''. We choose $\eps'$ later so that it converges to $\eps$.

In this section we will find it convenient to regard $j$ as representing a single domain element, instead of as an equivalence class of identical domain elements; thus $y_j$ refers to the probability of domain element $j$ in the hypothesis; and we avoid referring to $\histp$.

\begin{definition}
Let $\mathcal{Q}$ be the distribution-over-distributions defined according to the following process. (We note that, because we are in the Poissonized setting, the total probability mass of a distribution output by $\mathcal{Q}$ need not be exactly 1.) From Lemma~\ref{lem:upper-final}, we take optimal coefficients $q_j,x_{1,j},x_{2,j}$ for each $j$. 
For each domain element, with associated $j$, we flip a weighted coin, and with probability $q_j$ set this domain element to have probability $x_{1,j}$ and otherwise, have probability $x_{2,j}$.

We further define $\mathcal{Q}_{[\eps,\eps']}$ to be a \emph{conditional} distribution, where we sample a distribution $q$ from $\mathcal{Q}$ but proceed to return $q$ \emph{conditioned} on whether its $\ell_1$ distance from $P$ is $\in [\eps,\eps']$.

For a histogram $h$, recording for each domain element, the number of times it was sampled, we let $\mathcal{Q}(h),\mathcal{Q}_{[\eps,\eps']}(h)$ denote the probability of $h$ appearing as this histogram from $\Poi(k)$ samples from $q$ drawn respectively from $\mathcal{Q}$ or $\mathcal{Q}_{[\eps,\eps']}$. We will also sometimes consider a realization $r$ of the coin flip process in $\mathcal{Q}$, where $r_j$ denotes whether the $j^\textrm{th}$ coin is heads or tails. Let $\mathcal{Q}(h,r)$ denote the joint probability that the coin flips realized outcome $r$, and then that the histogram sampled from this was $h$. For the sake of convenience, we define the joint probability of observing a histogram $h$ and where the coin flip realization has $\ell_1$ distance $\eps$ from the hypothesis: $\mathcal{Q}(h,[\eps,\eps'])=\sum_{r:\sum_j |x_{r_j,j}-y_j|\in[\eps,\eps']} \mathcal{Q}(h,r)$; and we define the probability that the coin flip realization has $\ell_1$ distance $\in[\eps,\eps']$ from the hypothesis, regardless of the histogram sampling process: $\mathcal{Q}([\eps,\eps'])=\sum_h \mathcal{Q}(h,[\eps,\eps'])$. Using this notation, we can express the law of conditional probability: $\mathcal{Q}_{[\eps,\eps']}(h)=\frac{\mathcal{Q}(h,[\eps,\eps'])}{\mathcal{Q}([\eps,\eps'])}$.

\qed
\end{definition}

For the sake of completeness, we state and prove the standard Neyman-Pearson Lemma, showing that, to distinguish 2 hypotheses, the optimal tester (over \emph{all} possible testers!) is a log-likelihood threshold tester.

\begin{lemma}[Neyman-Pearson]\label{lem:neyman-pearson}
The tester for distinguishing $P$ from $\mathcal{Q}_{[\eps,\eps']}$ such that the max of type-1 and type-2 error is minimized, is defined by a log-likelihood threshold $\gamma$, and a tie-breaking probability $\lambda$, where the tester says ``no" if $\log\frac{\mathcal{Q}_{[\eps,\eps']}(h)}{P(h)}> \gamma$, ``yes'' if the log-likelihood is $<\gamma$, and flips a $\lambda$-biased coin if the log-likelihood equals $\gamma$. Equivalently, the tester says ``no" according to a $\lambda$-weighted coin flip between the output of whether $\log\frac{\mathcal{Q}_{[\eps,\eps']}(h)}{P(h)}\geq \gamma$ and the strict version $\log\frac{\mathcal{Q}_{[\eps,\eps']}(h)}{P(h)}> \gamma$.
\end{lemma}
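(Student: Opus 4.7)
The plan is to use the standard Neyman--Pearson pointwise comparison argument, combined with a balancing observation that identifies the specific $(\gamma,\lambda)$ minimizing the max-of-errors objective. First I would parameterize an arbitrary (possibly randomized) tester $T$ by a function $\pi(h) \in [0,1]$ giving the probability that $T$ outputs ``no'' on histogram $h$, so that the type-1 error equals $\sum_h P(h)\pi(h)$ and the type-2 error equals $\sum_h \mathcal{Q}_{[\eps,\eps']}(h)(1-\pi(h))$. Letting $\pi^*$ denote the claimed LLR tester (equal to $1$ when $\mathcal{Q}_{[\eps,\eps']}(h) > e^\gamma P(h)$, to $0$ when $<$, and to $\lambda$ at equality), the heart of the argument is the pointwise inequality
\[
(\pi^*(h) - \pi(h))\bigl(\mathcal{Q}_{[\eps,\eps']}(h) - e^\gamma P(h)\bigr) \geq 0,
\]
which holds for every $h$ because the two factors always have the same sign (and the equality case makes the second factor vanish regardless of $\lambda$).

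Summing over $h$ and rearranging gives the Pareto inequality
\[
\bigl(\text{type-2 of } T\bigr) - \bigl(\text{type-2 of } T^*\bigr) \geq e^\gamma\bigl(\bigl(\text{type-1 of } T^*\bigr) - \bigl(\text{type-1 of } T\bigr)\bigr),
\]
which rules out the possibility that some $T$ simultaneously beats $T^*$ on both error types: that would force the LHS to be strictly negative while the RHS is strictly positive (since $e^\gamma > 0$). This establishes that $T^*$ is Pareto-optimal in the (type-1, type-2)-plane for \emph{every} choice of $(\gamma, \lambda)$.

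The final step is to select the specific $(\gamma, \lambda)$ that minimizes the max. As $\gamma$ decreases from $+\infty$ to $-\infty$, the type-1 error of $T^*_{\gamma,0}$ increases monotonically from $0$ to $1$ and the type-2 error decreases monotonically from $1$ to $0$; at values of $\gamma$ where $\log(\mathcal{Q}_{[\eps,\eps']}(h)/P(h))$ has a mass point, sliding $\lambda \in [0,1]$ interpolates continuously between the left- and right-limits of each error as a function of $\gamma$. Thus the parametric curve $(\gamma,\lambda) \mapsto (\text{type-1}, \text{type-2})$ is continuous and monotone, so by the intermediate value theorem there exists $(\gamma^*, \lambda^*)$ at which the two errors coincide (or, degenerately, a corner where one error is already $0$). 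At this $(\gamma^*,\lambda^*)$, any tester $T$ with strictly smaller max-error would have to strictly beat $T^*$ on both errors, contradicting the Pareto-optimality established above. The equivalent ``$\lambda$-weighted coin flip between $\geq$ and $>$'' formulation in the lemma statement is just a rewriting of this $\pi^*$.

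The main obstacle I expect is the careful handling of the tie-breaking step: verifying that a \emph{single} interpolation parameter $\lambda$ at the \emph{single} threshold $\gamma$ is sufficient to reach the balancing point exactly. This reduces to the observation that on each side of the threshold $\pi^*$ is already pinned to $0$ or $1$, so the only freedom left is on the level set $\{h : \log(\mathcal{Q}_{[\eps,\eps']}(h)/P(h)) = \gamma\}$, where a single $\lambda$ suffices to sweep out the vertical segment of the trade-off curve induced by that level set, completing the proof.
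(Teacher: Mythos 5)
Your proposal is correct and follows essentially the same route as the paper's proof: the pointwise sign comparison $(\pi^*(h)-\pi(h))(\mathcal{Q}_{[\eps,\eps']}(h)-e^{\gamma}P(h))\geq 0$ summed over $h$ is exactly the paper's $L$/$M$ decomposition, and the balancing of the two error types via $(\gamma,\lambda)$ is the paper's first step (which it argues by a perturbation of $p_\gamma$ and $\gamma$ rather than your intermediate-value-theorem phrasing, but it is the same monotone trade-off observation). No gap.
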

\begin{proof}
    First, we will define the log-likelihood threshold tester as follows:
    \begin{equation}
        \Gamma(h) = 
        \begin{cases}
        1 & \textrm{if $\log\frac{\mathcal{Q}_{[\eps,\eps']}(h)}{P(h)} > \gamma$}\\
        1 & \textrm{with probability $p_{\gamma}$ if $\log\frac{\mathcal{Q}_{[\eps,\eps']]}(h)}{P(h)} = \gamma$}\\
        0 & \textrm{otherwise}
        \end{cases}
    \end{equation}
    We will then consider the best log-likelihood threshold tester
    \[\arg\min_\Gamma \max\left\{\Pr_{h\leftarrow P}\left[\Gamma(h) = 1\right]\,,\,\Pr_{h\leftarrow \mathcal{Q}_{[\eps,\eps']}}\left[\Gamma(h) = 0\right]\right\}\]
    We claim that this minimum is achieved when $\Pr_{h\leftarrow P}\left[\Gamma(h) = 1\right]=\Pr_{h\leftarrow \mathcal{Q}_{[\eps,\eps']}}\left[\Gamma(h) = 0\right]$. Suppose for the sake of contradiction that when it takes its minimum, we have $\Pr_{h\leftarrow P}\left[\Gamma(h) = 1\right]<\Pr_{h\leftarrow \mathcal{Q}_{[\eps,\eps']}}\left[\Gamma(h) = 0\right]$. As we can choose $p_{\gamma}$ and $\gamma$ as we wish, so we can increase $\Pr_{h\leftarrow P}\left[\Gamma(h) = 1\right]$ by increasing $p_{\gamma}$ and $\gamma$, and by changing this, $\Pr_{h\leftarrow \mathcal{Q}_{[\eps,\eps']}}\left[\Gamma(h) = 0\right]$ will not increase. Thus, we can get a better tester unless $\Pr_{h\leftarrow P}\left[\Gamma(h) = 1\right]=\Pr_{h\leftarrow \mathcal{Q}_{[\eps,\eps']}}\left[\Gamma(h) = 0\right]$, which contradicts the hypothesis that this is the best tester. A corresponding proof applies for the case $\Pr_{h\leftarrow P}\left[\Gamma(h) = 1\right]>\Pr_{h\leftarrow \mathcal{Q}_{[\eps,\eps']}}\left[\Gamma(h) = 0\right]$. Thus, the optimal log-likelihood threshold tester $\Gamma(\cdot)$, where the max of type-1 error and type-2 error is minimized, has these two terms are equal to each other.
    
    We set the tester $\Gamma(h)$ as the tester above and $\gamma$ as the threshold in the tester above, we will show that this is the optimal tester that will minimize the max of type-1 error and type-2 error among all testers $\Gamma^*(h)$. In order to do that, notice the following:
    \begin{align*}
        & \Exp_{h\leftarrow \mathcal{Q}_{[\eps,\eps']}}\left[\Gamma(h)\right]- \Exp_{h\leftarrow \mathcal{Q}_{[\eps,\eps']}}\left[\Gamma^*(h)\right] - e^{\gamma} \left(\Exp_{h\leftarrow P}\left[\Gamma(h)\right]- \Exp_{h\leftarrow P}\left[\Gamma^*(h)\right]\right)\\
        & = \sum_h \left(\Gamma(h) - \Gamma^*(h)\right)\left(Q_{[\eps,\eps']}(h)-e^{\gamma}P(h)\right) \\
        & = \sum_{h\in L} \left(\Gamma(h) - \Gamma^*(h)\right)\left(Q_{[\eps,\eps']}(h) - e^{\gamma}P(h) \right) +\sum_{h\in M} \left(\Gamma(h) - \Gamma^*(h)\right)\left(Q_{[\eps,\eps']}(h) - e^{\gamma} P(h)\right)\\
        & = \sum_{h\in L} \left(1 - \Gamma^*(h)\right)\left(Q_{[\eps,\eps']}(h) - e^{\gamma}P(h) \right) +\sum_{h\in M} \left( - \Gamma^*(h)\right)\left(Q_{[\eps,\eps']}(h) - e^{\gamma} P(h)\right)
    \end{align*}
    Here $L = \{h: Q_{[\eps,\eps']}(h) > e^{\gamma}P(h)\}$, $M = \{h: Q_{[\eps,\eps']}(h) < e^{\gamma}P(h)\}$. So we have $\Gamma(h) = 1$, when $h\in L$ and $\Gamma(h) = 0$, when $h\in M$. And notice that $\Gamma^*(h) \in [0,1]$, so both components of the equation above are non-negative, which means the equation above is non-negative. And we notice that:
    \begin{align*}
        & \Exp_{h\leftarrow \mathcal{Q}_{[\eps,\eps']}}\left[\Gamma(h)\right]- \Exp_{h\leftarrow \mathcal{Q}_{[\eps,\eps']}}\left[\Gamma^*(h)\right] - e^{\gamma} \left(\Exp_{h\leftarrow P}\left[\Gamma(h)\right]- \Exp_{h\leftarrow P}\left[\Gamma^*(h)\right]\right)\\
        & = \Pr_{h\leftarrow \mathcal{Q}_{[\eps,\eps']}}\left[\Gamma(h) = 1\right]- \Pr_{h\leftarrow \mathcal{Q}_{[\eps,\eps']}}\left[\Gamma^*(h) = 1\right] - e^{\gamma} \left(\Pr_{h\leftarrow P}\left[\Gamma(h) = 1\right]- \Pr_{h\leftarrow P}\left[\Gamma^*(h) = 1\right]\right)\\
        & = \Pr_{h\leftarrow \mathcal{Q}_{[\eps,\eps']}}\left[\Gamma^*(h) = 0\right] -\Pr_{h\leftarrow \mathcal{Q}_{[\eps,\eps']}}\left[\Gamma(h) = 0\right] - e^{\gamma} \left(\Pr_{h\leftarrow P}\left[\Gamma(h) = 1\right]- \Pr_{h\leftarrow P}\left[\Gamma^*(h) = 1\right]\right)
    \end{align*}
    And by the definition of $\Gamma(h)$, we know that $\Pr_{h\leftarrow P}\left[\Gamma(h) = 1\right] = \Pr_{h\leftarrow \mathcal{Q}_{[\eps,\eps']}}\left[\Gamma(h) = 0\right]$. Then suppose $\Gamma^*$ is a better tester, which means $\Pr_{h\leftarrow \mathcal{Q}_{[\eps,\eps']}}\left[\Gamma^*(h) = 0\right] < \Pr_{h\leftarrow \mathcal{Q}_{[\eps,\eps']}}\left[\Gamma(h) = 0\right]$ and $\Pr_{h\leftarrow P}\left[\Gamma^*(h) = 1\right] < \Pr_{h\leftarrow P}\left[\Gamma(h) = 1\right]$. However, this will make the equation above negative, which contradicts the fact that the equation is non-negative for any possible tester $\Gamma^*$. So there is no such a tester $\Gamma^*(h)$ with a better max of type-1 and type-2 error than $\Gamma(h)$. That finishes the proof.
\end{proof}

Given that the optimal tester for distinguishing 2 hypotheses is a log-likelihood threshold tester, we now look to understand Chernoff bounds on the performance of such testers.

\begin{lemma}\label{lem:pq-chernoff}
For any two distributions $P,Q$, we have that the log-likelihood tester with threshold $\gamma$ has failure probability bounded by the Chernoff bound
\[\Pr_{x\leftarrow P}\left[\log\frac{Q(x)}{P(x)}\geq \gamma\right]\chernoff \min_{u\leq 1} \sum_x P(x)^u Q(x)^{1-u} e^{-\gamma (1-u)}\]
\end{lemma}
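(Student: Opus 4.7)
The plan is to apply the standard Chernoff/moment generating function inequality to the real-valued random variable $Y = \log\frac{Q(x)}{P(x)}$ where $x \sim P$, and then reparameterize. This is exactly the same manipulation used throughout the paper when deriving Chernoff bounds on semilinear testers; here it just happens in the very clean setting of a single random variable.

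Concretely, I would start from the Chernoff inequality: for any $t \geq 0$,
\[
\Pr_{x\leftarrow P}[Y \geq \gamma] \leq e^{-t\gamma}\, \Exp_{x\leftarrow P}\!\left[e^{tY}\right] = e^{-t\gamma} \sum_x P(x) \left(\frac{Q(x)}{P(x)}\right)^{t} = e^{-t\gamma} \sum_x P(x)^{1-t} Q(x)^{t}.
\]
Here the key step is recognizing that $e^{tY} = (Q(x)/P(x))^t$ and that the expectation over $P$ absorbs the $P(x)$ factor into the Rényi-style sum $\sum_x P(x)^{1-t} Q(x)^t$. To match the statement in the lemma, I substitute $u = 1 - t$, so that the constraint $t \geq 0$ becomes $u \leq 1$ and the bound rewrites as
\[
\Pr_{x\leftarrow P}[Y \geq \gamma] \leq e^{-(1-u)\gamma} \sum_x P(x)^u Q(x)^{1-u}.
\]
Taking the infimum over all admissible $u$ (i.e.\ $u \leq 1$) yields the stated inequality.

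There is no real obstacle here: the only thing to verify carefully is the sign convention and that the reparameterization $u = 1-t$ correctly maps $t \geq 0$ to $u \leq 1$, and that boundary/degenerate cases (summands where $P(x) = 0$ or $Q(x) = 0$) are handled by the usual conventions $0^{u} \cdot 0^{1-u} = 0$ and $0 \log 0 = 0$, which do not affect the inequality. Since the bound holds termwise in $t$ (equivalently $u$), the $\min_{u \leq 1}$ version follows immediately.
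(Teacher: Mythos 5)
Your proposal is correct and is exactly the argument the paper gives (the paper's proof is just a one-line version of it): apply the Chernoff bound to the random variable $\log\frac{Q(x)}{P(x)}$ under $x\leftarrow P$, observe the moment generating function is $\sum_x P(x)^{1-t}Q(x)^t$, and substitute $u=1-t$ so that $t\geq 0$ becomes $u\leq 1$. Nothing is missing.
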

\begin{proof}
We simply take the Chernoff bound of the distribution that takes value $\log\frac{Q(x)}{P(x)}$ with probability $P(x)$; for Chernoff parameter $t$, quick simplification gives $u=1-t$; hence $t\geq 0$ translates to the bound $u\leq 1$.
\end{proof}

Plugging in $P,\mathcal{Q}_{[\eps,\eps']}$ to Lemma~\ref{lem:pq-chernoff} with some threshold $\gamma$, and also, symmetrically, plugging in $\mathcal{Q}_{[\eps,\eps']},P$ with the corresponding threshold $-\gamma$ yields
\begin{corollary}\label{lem:first-chernoff}
\[\min_\gamma \max\left\{\Pr_{h\leftarrow P}\left[\log\frac{\mathcal{Q}_{[\eps,\eps']}(h)}{P(h)}\geq \gamma\right]\,,\,\Pr_{h\leftarrow \mathcal{Q}_{[\eps,\eps']}}\left[\log\frac{\mathcal{Q}_{[\eps,\eps']}(h)}{P(h)}\leq \gamma\right]\right\}\chernoff \min_{u\in [0,1]} \sum_h P(h)^u \mathcal{Q}_{[\eps,\eps']}(h)^{1-u} \] 
\end{corollary}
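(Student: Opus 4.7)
The plan is essentially direct bookkeeping: apply Lemma~\ref{lem:pq-chernoff} twice, once in each direction, and then align variables and pick a convenient threshold. I would first apply Lemma~\ref{lem:pq-chernoff} with the pair $(P,\mathcal{Q}_{[\eps,\eps']})$ and threshold $\gamma$ to obtain
\[\Pr_{h\leftarrow P}\left[\log\frac{\mathcal{Q}_{[\eps,\eps']}(h)}{P(h)}\geq \gamma\right]\chernoff \min_{u\leq 1}\sum_h P(h)^u \mathcal{Q}_{[\eps,\eps']}(h)^{1-u}\,e^{-\gamma(1-u)}.\]
Then I would apply the same lemma symmetrically with the pair $(\mathcal{Q}_{[\eps,\eps']},P)$ and threshold $-\gamma$, using the trivial equivalence $\log\frac{\mathcal{Q}_{[\eps,\eps']}(h)}{P(h)}\leq \gamma \iff \log\frac{P(h)}{\mathcal{Q}_{[\eps,\eps']}(h)}\geq -\gamma$; this gives a bound of the form $\min_{u'\leq 1}\sum_h \mathcal{Q}_{[\eps,\eps']}(h)^{u'}P(h)^{1-u'}e^{\gamma(1-u')}$.

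The key bookkeeping step is a reindexing of the Chernoff parameter: substituting $u=1-u'$ in the second bound rewrites it as $\min_{u\geq 0}\sum_h P(h)^u \mathcal{Q}_{[\eps,\eps']}(h)^{1-u}\,e^{\gamma u}$, which has exactly the same summand $P(h)^u \mathcal{Q}_{[\eps,\eps']}(h)^{1-u}$ as the first bound. Thus the two Chernoff bounds differ only in their exponential $\gamma$-factor and in the admissible range of the Chernoff parameter ($u\leq 1$ versus $u\geq 0$).

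To finish, I would take the minimum over $\gamma$ by choosing the particular value $\gamma=0$, at which both exponential factors equal $1$. Since any restriction of the parameter domain can only increase a minimum, restricting the two parameter ranges $u\leq 1$ and $u\geq 0$ to their intersection $u\in[0,1]$ weakens each of the two bounds, and they become the same quantity $\sum_h P(h)^u \mathcal{Q}_{[\eps,\eps']}(h)^{1-u}$. Therefore the maximum of the two error probabilities is at most $\min_{u\in[0,1]}\sum_h P(h)^u \mathcal{Q}_{[\eps,\eps']}(h)^{1-u}$, and taking the min over $\gamma$ on the left preserves this.

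There is no substantive obstacle here; the only thing to watch out for is the direction of the change of variable and verifying that $[0,1]$ really is the intersection of the two valid Chernoff-parameter ranges, so that neither Chernoff application becomes vacuous when we restrict to $u\in[0,1]$.
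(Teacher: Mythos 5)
Your proof is correct and follows essentially the same route as the paper: two applications of Lemma~\ref{lem:pq-chernoff} (the second with the roles of $P$ and $\mathcal{Q}_{[\eps,\eps']}$ swapped and the threshold negated), the reindexing $u\mapsto 1-u$, and the choice $\gamma=0$ together with restricting both Chernoff-parameter ranges to their intersection $[0,1]$. The only difference is that the paper additionally runs a case analysis over $\gamma\neq 0$ to show that $\gamma=0$ in fact minimizes the max of the two Chernoff bounds; since the left-hand side is a minimum over $\gamma$, your shortcut of simply exhibiting $\gamma=0$ already suffices for the stated inequality.
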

\begin{proof}
Explicitly plugging into Lemma~\ref{lem:pq-chernoff} (where for the second invocation, the expression inside the probability is easily seen to be the negative of what is in Lemma~\ref{lem:pq-chernoff}, and hence equivalent) the left hand side is bounded by \begin{equation}\label{eq:pq-chernoff}\min_\gamma\max\left\{\min_{u\leq 1} \sum_h P(h)^u \mathcal{Q}_{[\eps,\eps']}(h)^{1-u} e^{-\gamma (1-u)}\,,\, \min_{u\geq 0} \sum_h P(h)^u \mathcal{Q}_{[\eps,\eps']}(h)^{1-u} e^{\gamma u}\right\}\end{equation}
If $\gamma=0$, then both components of the max are equal; and the min is attained at $u\in[0,1]$ since $\sum_h P(h)^u \mathcal{Q}_{[\eps,\eps']}(h)^{1-u}$ equals 1 at $u=0$ and $u=1$ and is convex;  and thus the overall expression equals $\min_{u\in [0,1]} \sum_h P(h)^u \mathcal{Q}_{[\eps,\eps']}(h)^{1-u}$ as claimed.

Otherwise, without loss of generality, we consider the case $\gamma>0$.

If 0 is not an optimal $u$ for $\min_{u\geq 0} \sum_h P(h)^u \mathcal{Q}_{[\eps,\eps']}(h)^{1-u} e^{\gamma u}$  then the $e^{\gamma u}$ term is strictly greater than 1, and thus the min is greater than $\min_{u\in [0,1]} \sum_h  P(h)^u \mathcal{Q}_{[\eps,\eps']}(h)^{1-u}$, implying that $\gamma$ is not actually the optimal choice of $\gamma$, a contradiction.

Otherwise, 0 is an optimal $u$, meaning that our expression is $\sum_h \mathcal{Q}_{[\eps,\eps']}(h)=1$. And if 1 is the optimal value, then since this value is attained at both $u=0$ and $u=1$ and the expression $a^u b^{1-u}$ is convex in $u$ for any $a,b\geq 0$, we have that the Equation~\ref{eq:pq-chernoff} attains its minimum of 1, for $\gamma=0$, equaling $\min_{u\in [0,1]} \sum_h P(h)^u \mathcal{Q}_{[\eps,\eps']}(h)^{1-u}$ as desired.
\end{proof}

We point out that the Chernoff bounds for a $\leq$ tail are the same as those for a $<$ tail. 

In light of Lemma~\ref{lem:neyman-pearson},  Corollary~\ref{lem:first-chernoff} is bounding the performance of the best distinguisher for $P$ versus $\mathcal{Q}_{[\eps,\eps']}$. 

And the best distinguisher of $P$ from all distributions $\geq\eps$ far from $P$ must in particular distinguish $P$ from $\mathcal{Q}_{[\eps,\eps']}$ .

From Lemma~\ref{lem:neyman-pearson}, the error of the optimal tester is at least
    \[\min_\gamma \max\left\{\Pr_{h\leftarrow P}\left[\log\frac{\mathcal{Q}_{[\eps,\eps']}(h)}{P(h)}>\gamma\right]\,,\,\Pr_{h\leftarrow \mathcal{Q}_{[\eps,\eps']}}\left[\log\frac{\mathcal{Q}_{[\eps,\eps']}(h)}{P(h)}< \gamma\right]\right\}\]

If the optimal $\gamma$ is $\geq 0$ then the max of these two expressions is at least the first one, which is at least $\Pr_{h\leftarrow P}\left[\log\frac{\mathcal{Q}_{[\eps,\eps']}(h)}{P(h)}>0\right]$; and the best Chernoff bound on this is $\min_{u\in[0,1]} \sum_h P(h)^u \mathcal{Q}_{[\eps,\eps']}(h)^{1-u}$.

On the other hand, if the optimal $\gamma$ is $\leq 0$, then the error of the optimal tester is at least the second term $\Pr_{h\leftarrow \mathcal{Q}_{[\eps,\eps']}}\left[\log\frac{\mathcal{Q}_{[\eps,\eps']}(h)}{P(h)}< \gamma\right]$, which is at least $\Pr_{h\leftarrow \mathcal{Q}_{[\eps,\eps']}}\left[\log\frac{\mathcal{Q}_{[\eps,\eps']}(h)}{P(h)}< 0\right]$; and the best Chernoff bound on this is the expression from above $\min_{u\in[0,1]} \sum_h P(h)^u \mathcal{Q}_{[\eps,\eps']}(h)^{1-u}$. 

\begin{definition}
We thus define $ch_1$ to be the smaller of the ratio between $\Pr_{h\leftarrow P}\left[\log\frac{\mathcal{Q}_{[\eps,\eps']}(h)}{P(h)}>0\right]$ or $\Pr_{h\leftarrow \mathcal{Q}_{[\eps,\eps']}}\left[\log\frac{\mathcal{Q}_{[\eps,\eps']}(h)}{P(h)}< 0\right]$ and the common Chernoff bound $\min_{u\in[0,1]} \sum_h P(h)^u \mathcal{Q}_{[\eps,\eps']}(h)^{1-u}$.

\qed

\end{definition}


Namely, the failure probability of the best tester is at least \[ch_1\cdot\min_{u\in [0,1]} \sum_h P(h)^u \mathcal{Q}_{[\eps,\eps']}(h)^{1-u}\]



Towards understanding this bound and reexpressing it via a \emph{second} layer of Chernoff bounds, we define the following ``coin flip" random variables.

\begin{definition}\label{def:pi}
    Let $q_{2,j}=1-q_{1,j}=q_j$ for convenience. Let $h_j$ be the number of times domain element $j$ is sampled. Define, for each $j\in\{1,
    \ldots,n\}$ the independent random coin-flip variable $X_j$ so that for each $r_j\in \{1,2\}$---denoting the outcome of the $j^\textrm{th}$ coin flip---the variable $X_j$ takes value $|x_{r_j,j}-y_j|$ with probability $\pi_{r_j,j}$ defined to be \[\pi_{r_j,j}=\sum_{h_j} \frac{poi(k\,y_j,h_j)^u}{(q_1 poi(k\,x_{1,j},h_j)+q_2 poi(k\,x_{2,j},h_j))^u}q_{r_j,j}\,poi(k\,x_{r_j,j},h_j)\]
    where we emphasize that $X_j$ is an unnormalized distribution, in that its probabilities may not sum to 1.
\end{definition}

\begin{lemma}
    For $u\in [0,1]$, and using the probabilities $\pi$ from Definition~\ref{def:pi}, we have
    \begin{equation}
    \label{eq:lower-lower}
    \sum_h P(h)^u \mathcal{Q}_{[\eps,\eps']}(h)^{1-u}\geq \frac{1}{\mathcal{Q}([\eps,\eps'])} \sum_{r:\sum_j |x_{r_j,j}-y_j|\in[\eps,\eps']}  \prod_j \pi_{r_j,j}=\frac{1}{\mathcal{Q}([\eps,\eps'])}\Pr_{x_1\leftarrow X_1,\ldots, x_n\leftarrow X_n}\left[\sum_j x_j\in[\eps,\eps']\right]
    \end{equation}

The best Chernoff bound for the right hand side is \begin{equation}\label{eq:lower-lower-chernoff}\frac{1}{\mathcal{Q}([\eps,\eps'])}\min_{s\geq 0} e^{-s\eps}\prod_j \left( e^{s|x_{1,j}-y_j|}\pi_{1,j}+ e^{s|x_{2,j}-y_j|}\pi_{2,j}\right) \end{equation}
\end{lemma}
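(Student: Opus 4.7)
The plan splits into the probability inequality (Equation~\ref{eq:lower-lower}) and the Chernoff bound (Equation~\ref{eq:lower-lower-chernoff}).

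The equality at the right of Equation~\ref{eq:lower-lower} is immediate from the definition of $X_j$ as independent (unnormalized) random variables with weight $\pi_{r_j,j}$ on $|x_{r_j,j}-y_j|$: expanding $\Pr_{x_j\leftarrow X_j}[\sum_j x_j\in[\eps,\eps']]$ using independence of the coordinates gives $\sum_{r:\sum_j|x_{r_j,j}-y_j|\in[\eps,\eps']}\prod_j \pi_{r_j,j}$.

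For the main inequality, I plan first to establish the clean algebraic identity $\sum_{r:\text{cond}}\prod_j \pi_{r_j,j} = \sum_h (P(h)/\mathcal{Q}(h))^u\,\mathcal{Q}(h,[\eps,\eps'])$ by expanding each $\pi_{r_j,j}$ as a sum over $h_j$, distributing the product over $j$ as a sum over histograms, and using the Poissonization-induced product structures $P(h)=\prod_j \mathrm{poi}(ky_j,h_j)$ and $\mathcal{Q}(h)=\prod_j(q_{1,j}\mathrm{poi}(kx_{1,j},h_j)+q_{2,j}\mathrm{poi}(kx_{2,j},h_j))$; the sum over $r$ with $\sum_j|x_{r_j,j}-y_j|\in[\eps,\eps']$ then collapses the inner product into $\mathcal{Q}(h,[\eps,\eps'])$. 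Having done this, the remaining task is to show $\sum_h P(h)^u\mathcal{Q}_{[\eps,\eps']}(h)^{1-u}\geq \frac{1}{\mathcal{Q}([\eps,\eps'])}\sum_h (P(h)/\mathcal{Q}(h))^u\mathcal{Q}(h,[\eps,\eps'])$. I would expand $\mathcal{Q}_{[\eps,\eps']}(h)^{1-u}=(\mathcal{Q}(h)p(h)/\mathcal{Q}([\eps,\eps']))^{1-u}$, where $p(h)=\mathcal{Q}(h,[\eps,\eps'])/\mathcal{Q}(h)\in[0,1]$, and apply the concavity of $t\mapsto t^{1-u}$ on $[0,1]$ --- either pointwise via $p(h)^{1-u}\geq p(h)$, or via a Jensen-type bound on the weighted sum $\mathcal{Q}(h,[\eps,\eps'])=\sum_{r:\text{cond}}\mathcal{Q}(r)\Pr[h\mid r]$ with weights $\mathcal{Q}(r)/\mathcal{Q}([\eps,\eps'])$. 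Collecting the accumulated powers of $\mathcal{Q}([\eps,\eps'])$ from the conditional normalization and from the concavity step then yields the claimed prefactor.

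For Equation~\ref{eq:lower-lower-chernoff}, I apply the standard Chernoff bound to the sum of independent random variables $\sum_j x_j$: since $\Pr[\sum_j x_j\in[\eps,\eps']]\leq \Pr[\sum_j x_j\geq\eps]\leq \inf_{s\geq 0}e^{-s\eps}\prod_j\Exp[e^{sx_j}]$, and $\Exp[e^{sx_j}]=\pi_{1,j}e^{s|x_{1,j}-y_j|}+\pi_{2,j}e^{s|x_{2,j}-y_j|}$ by the definition of $X_j$, substituting yields the stated expression, and multiplying by the $1/\mathcal{Q}([\eps,\eps'])$ prefactor from the preceding inequality completes the proof.

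The main obstacle is the concavity/Jensen step: the two powers of $\mathcal{Q}([\eps,\eps'])$ arising from conditional normalization and from the concavity bound must combine cleanly into the stated $1/\mathcal{Q}([\eps,\eps'])$, and this bookkeeping is the most delicate part, since the natural Jensen bound on the weighted sum and the pointwise bound $p(h)^{1-u}\geq p(h)$ each produce somewhat different intermediate forms, and care is needed to identify the right path that matches the exact prefactor stated in the lemma.
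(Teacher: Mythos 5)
Your proposal follows essentially the same route as the paper's proof: the pointwise bound $p(h)^{1-u}\geq p(h)$ is exactly the paper's step $\mathcal{Q}(h,[\eps,\eps'])^{1-u}\geq \mathcal{Q}(h,[\eps,\eps'])\,\mathcal{Q}(h)^{-u}$ (which holds because $\mathcal{Q}(h,[\eps,\eps'])\leq\mathcal{Q}(h)$ and $-u\leq 0$), and the sum--product interchange identifying $\pi_{r_j,j}$ via the Poissonized product structure, followed by a standard Chernoff bound on the sum of the independent $X_j$, is identical.

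On the ``main obstacle'' you flag: your bookkeeping worry is well-founded, but the resolution is that the prefactor genuinely comes out as $\mathcal{Q}([\eps,\eps'])^{-(1-u)}$, not $\mathcal{Q}([\eps,\eps'])^{-1}$; the conditional normalization contributes $\mathcal{Q}([\eps,\eps'])^{-(1-u)}$ and the concavity step contributes no further power. This is what the paper's own derivation produces, and it is the exponent actually used downstream in Lemma~\ref{lem:ch1-ch2} and Corollary~\ref{cor:ch1-ch2}, so the exponent-free $1/\mathcal{Q}([\eps,\eps'])$ in the lemma statement should be read as a typo for $1/\mathcal{Q}([\eps,\eps'])^{1-u}$ (note the stated version is the \emph{stronger} claim, since $\mathcal{Q}([\eps,\eps'])\leq 1$, and is not what either argument establishes). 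With that correction your argument is complete; do not spend effort trying to make the two powers ``combine cleanly'' into the typo'd form. One last small point: the Chernoff expression in Equation~\ref{eq:lower-lower-chernoff} is an \emph{upper} bound on the right-hand side of Equation~\ref{eq:lower-lower} (the gap being absorbed later into $ch_2$), so it should not be chained as a further lower bound on the left-hand side; your final sentence for that part reads slightly as if it were.
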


\begin{proof}
We have, since the joint probability $\mathcal{Q}(h,[\eps,\eps'])$ is at most the probability of the marginal $\mathcal{Q}(h)$, that
\begin{align}\sum_h P(h)^u \mathcal{Q}_{[\eps,\eps']}(h)^{1-u}&=\frac{1}{\mathcal{Q}([\eps,\eps'])^{1-u}}\sum_h P(h)^u \mathcal{Q}(h,[\eps,\eps'])^{1-u}\notag\\
&\geq\frac{1}{\mathcal{Q}([\eps,\eps'])^{1-u}}\sum_h \frac{P(h)^u \mathcal{Q}(h,[\eps,\eps'])}{\mathcal{Q}(h)^u}\notag\\
&=\frac{1}{\mathcal{Q}([\eps,\eps'])^{1-u}}\sum_h \left(\frac{P(h)^u}{\mathcal{Q}(h)^u}\sum_{r:\sum_j |x_{r_j,j}-y_j|\in[\eps,\eps']}\mathcal{Q}(h,r) \right) \label{eq:pq-inequality}\end{align}

Then we can easily compute $\mathcal{Q}(r,h)=\prod_j q_{r_j,j} \,poi(k x_{r_j,j},h_j)$; $P(h)=\prod_j poi(k y_j,h_j)$; $\mathcal{Q}(h)=\prod_j (q_{1,j} poi(k x_{1,j},h_j)+q_{2,j} poi(k x_{2,j},h_j))$. Thus the right hand side of Equation~\ref{eq:pq-inequality} becomes, after pulling the $r$ sum to the outside, and pulling the product over $j$ before the sum over $h$:

\[\frac{1}{\mathcal{Q}([\eps,\eps'])^{1-u}} \sum_{r:\sum_j |x_{r_j,j}-y_j|\in[\eps,\eps']}  \prod_j \sum_{h_j} \frac{poi(k y_j,h_j)^u}{(q_1 poi(k x_{1,j},h_j)+q_2 poi(k x_{2,j},h_j))^u}q_{r_j,j}\,poi(k x_{r_j,j},h_j)\]
where the sum expression is seen to be exactly $\pi_{r_j,j}$ from Definition~\ref{def:pi} as claimed.

This expression (ignoring the common $\frac{1}{\mathcal{Q}([\eps,\eps'])^{1-u}}$ factor out front) is seen to exactly compute the probability that the sum of the independent random coin-flips $X_1,\ldots,X_n$ is in the interval $[\eps,\eps']$. We can then apply standard Chernoff bounds, as desired.
\end{proof}

\begin{definition}
Let $ch_2$ be the ratio between the right hand side of Equation~\ref{eq:lower-lower} and its Chernoff bound, Equation~\ref{eq:lower-lower-chernoff}, for the $u$ defined as $\arg\min_{u\in [0,1]} \sum_h P(h)^u \mathcal{Q}_{[\eps,\eps']}(h)^{1-u}$
\end{definition}

Combining the above results:
\begin{lemma}\label{lem:ch1-ch2}
    The failure probability of any algorithm for distinguishing $Poi(k)$ samples from $P$ versus $\mathcal{Q}_{[\eps,\eps']}$ is at least $ch_1\cdot ch_2\cdot \frac{1}{\mathcal{Q}([\eps,\eps'])^{1-u}}\min_{s\geq 0} e^{-s\eps}\prod_j \left( e^{s|x_{1,j}-y_j|}\pi_{1,j}+ e^{s|x_{2,j}-y_j|}\pi_{2,j}\right)$ for $u=\arg\min_{u\in [0,1]} \sum_h P(h)^u \mathcal{Q}_{[\eps,\eps']}(h)^{1-u}$.
\end{lemma}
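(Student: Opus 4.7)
My plan is to chain together the three Chernoff-style bounds already developed in Section~\ref{sec:lower}, keeping careful track of the ``slack factors" $ch_1$ and $ch_2$ so that all inequalities become equalities up to these explicit factors.

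First, I would invoke the Neyman-Pearson Lemma~\ref{lem:neyman-pearson} to conclude that the optimal $\Poi(k)$-sample tester for distinguishing $P$ from $\mathcal{Q}_{[\eps,\eps']}$ is a log-likelihood threshold tester with some threshold $\gamma$. Since the failure probability is the max of type~I and type~II error, I would next follow the case split already written immediately before the definition of $ch_1$: depending on whether the optimal $\gamma$ is nonnegative or nonpositive, either the type~I error is at least $\Pr_{h\leftarrow P}[\log(\mathcal{Q}_{[\eps,\eps']}(h)/P(h))>0]$, or the type~II error is at least $\Pr_{h\leftarrow \mathcal{Q}_{[\eps,\eps']}}[\log(\mathcal{Q}_{[\eps,\eps']}(h)/P(h))<0]$. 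By the definition of $ch_1$ and Corollary~\ref{lem:first-chernoff}, whichever of these applies is at least $ch_1\cdot \min_{u\in[0,1]} \sum_h P(h)^u\,\mathcal{Q}_{[\eps,\eps']}(h)^{1-u}$. So the failure probability of any algorithm is lower bounded by this quantity.

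Next, I would fix $u=\arg\min_{u\in[0,1]} \sum_h P(h)^u\,\mathcal{Q}_{[\eps,\eps']}(h)^{1-u}$, and apply Equation~\ref{eq:lower-lower} (the lower bound on $\sum_h P(h)^u\,\mathcal{Q}_{[\eps,\eps']}(h)^{1-u}$ in terms of the coin-flip probability $\Pr_{x_j\leftarrow X_j}[\sum_j x_j\in[\eps,\eps']]$). Using that $\mathcal{Q}([\eps,\eps'])\leq 1$ and $u\in[0,1]$, we have $1/\mathcal{Q}([\eps,\eps'])\geq 1/\mathcal{Q}([\eps,\eps'])^{1-u}$, so Equation~\ref{eq:lower-lower} in particular gives the weaker but more convenient bound
\[ \sum_h P(h)^u\,\mathcal{Q}_{[\eps,\eps']}(h)^{1-u} \;\geq\; \frac{1}{\mathcal{Q}([\eps,\eps'])^{1-u}}\,\Pr_{x_j\leftarrow X_j}\!\Big[\textstyle\sum_j x_j\in[\eps,\eps']\Big]. \]
Finally, I would use the definition of $ch_2$ to rewrite this coin-flip probability as $ch_2$ times its own Chernoff bound $\min_{s\geq 0} e^{-s\eps}\prod_j(e^{s|x_{1,j}-y_j|}\pi_{1,j}+e^{s|x_{2,j}-y_j|}\pi_{2,j})$; note that the common $1/\mathcal{Q}([\eps,\eps'])$ factor appearing in both Equations~\ref{eq:lower-lower} and~\ref{eq:lower-lower-chernoff} cancels in the ratio, so $ch_2$ is really the ratio between the raw coin-flip probability and the raw Chernoff-product. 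Multiplying the three factors $ch_1$, $ch_2$, and $1/\mathcal{Q}([\eps,\eps'])^{1-u}$ together with the Chernoff product yields exactly the claimed expression.

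I do not expect a genuine obstacle here: the content of the lemma is essentially a declaration that the two applications of Chernoff bounds and the one inequality from passing to the marginal conditional distribution each introduce a controllable multiplicative slack, and these slacks have been named $ch_1$ and $ch_2$ precisely so that they can be packaged into the final statement. The one subtlety is bookkeeping around the $1/\mathcal{Q}([\eps,\eps'])$ factors, which is resolved by the monotonicity observation $\mathcal{Q}([\eps,\eps'])^{1-u}\geq \mathcal{Q}([\eps,\eps'])$ noted above; this will become important later in Section~\ref{sec:gartner-ellis} when $\mathcal{Q}([\eps,\eps'])$ is shown to tend to $1$ in the relevant limit, making the slack factors $ch_1$, $ch_2$, and $\mathcal{Q}([\eps,\eps'])^{1-u}$ all converge to $1$.
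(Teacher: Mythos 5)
Your proof is correct and is essentially the paper's own argument, which presents this lemma simply as the combination of Lemma~\ref{lem:neyman-pearson}, Corollary~\ref{lem:first-chernoff}, the definition of $ch_1$, Equation~\ref{eq:lower-lower}, and the definition of $ch_2$. Your extra monotonicity step converting $1/\mathcal{Q}([\eps,\eps'])$ into $1/\mathcal{Q}([\eps,\eps'])^{1-u}$ is a harmless (and graceful) workaround for what is evidently a typo in the statement of Equation~\ref{eq:lower-lower}, whose own proof already carries the exponent $1-u$.
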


We bound $\mathcal{Q}([\eps,\eps'])\leq \mathcal{Q}(\leq\eps')$ and bound this with a standard Chernoff bound, as \[\mathcal{Q}(\leq\eps')\leq e^{-\eps' \alpha}\prod_j \left(q_j e^{\alpha|x_{1,j}-y_j|}+(1-q_j) e^{\alpha|x_{2,j}-y_j|}\right)\] where we will use Chernoff parameter $\alpha$, with $\alpha$ from the optimization in Lemma~\ref{lem:upper-final}. Since Chernoff bounds are upper bounds, and $\mathcal{Q}([\eps,\eps'])$ appears in the denominator of the expression of Lemma~\ref{lem:ch1-ch2}, we thus have a lower bound for the failure probability. 

The expression from Lemma~\ref{lem:ch1-ch2} is lower-bounded by its minimum over all $u$. Thus, from Lemma~\ref{lem:ch1-ch2}, expanding out the expressions for $\pi$ from Definition~\ref{def:pi} and substituting in the above Chernoff bound for $\mathcal{Q}([\eps,\eps'])$ we have:

\begin{corollary}\label{cor:ch1-ch2}
    The failure probability of any algorithm for distinguishing $Poi(k)$ samples from $P$ versus $\mathcal{Q}_{[\eps,\eps']}$ is at least 

\begin{gather}       \hspace{-2cm} ch_1\cdot ch_2\cdot e^{\alpha(\eps'-\eps)}\cdot \min_{u\in[0,1],s\geq 0} e^{\eps(-s+(1-u)\alpha)}\prod_j \sum_{i} \frac{poi(k\,y_j,i)^u \left(q_j\,poi(k\,x_{1,j},i)e^{s|x_{1,j}-y_j|}+(1-q_j)\,poi(k\,x_{2,j},i)e^{s|x_{2,j}-y_j|}\right)}{(q_1 poi(k\,x_{1,j},i)+q_2 poi(k\,x_{2,j},i))^u \left(q_j e^{\alpha|x_{1,j}-y_j|}+(1-q_j) e^{\alpha|x_{2,j}-y_j|}\right)^{1-u}} \label{eq:lower-optimization}\end{gather}
\end{corollary}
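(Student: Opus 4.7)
The plan is to derive Corollary~\ref{cor:ch1-ch2} from Lemma~\ref{lem:ch1-ch2} by three simple manipulations: relax the specific optimizer $u$ to a minimization over $u\in[0,1]$, substitute a Chernoff upper bound on the normalization $\mathcal{Q}([\eps,\eps'])$, and expand the definition of $\pi_{r_j,j}$ from Definition~\ref{def:pi}.

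First, the bound in Lemma~\ref{lem:ch1-ch2} holds at the specific $u$ equal to $\arg\min_{u\in[0,1]}\sum_h P(h)^u\mathcal{Q}_{[\eps,\eps']}(h)^{1-u}$, and since that value lies in $[0,1]$, I would replace it by a general $\min_{u\in[0,1]}$ without destroying the inequality (the min can only be smaller). Combined with the existing $\min_{s\geq 0}$, this produces a joint minimization over $(u,s)\in[0,1]\times[0,\infty)$.

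Second, I would bound $\mathcal{Q}([\eps,\eps'])\leq \mathcal{Q}(\leq \eps')$ (the interval is contained in the half-line) and apply the one-sided Chernoff bound at parameter $\alpha$, where $\alpha<0$ comes from Lemma~\ref{lem:upper-final}. Since the $\ell_1$ distance between a coin-flip distribution and $P$ is a sum of independent terms $|x_{r_j,j}-y_j|$, the moment generating function factorizes into $\prod_j(q_j e^{\alpha|x_{1,j}-y_j|}+(1-q_j)e^{\alpha|x_{2,j}-y_j|})$, which, multiplied by $e^{-\eps'\alpha}$, gives the stated upper bound on $\mathcal{Q}(\leq\eps')$. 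This factor appears raised to $1-u\geq 0$ in a denominator, so substituting the upper bound preserves the lower-bound direction. Finally, plugging the explicit formula for $\pi_{r_j,j}$ into $e^{s|x_{1,j}-y_j|}\pi_{1,j}+e^{s|x_{2,j}-y_j|}\pi_{2,j}$ and absorbing the $e^{s|\cdot|}$ factors inside the sum over $i$ reproduces exactly the numerator in Equation~\ref{eq:lower-optimization}.

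The one step that demands care---and the main arithmetic obstacle---is regrouping the exponential prefactor. Combining the contributions naturally yields $e^{\eps'\alpha(1-u)-s\eps}$, which differs from the stated $e^{\alpha(\eps'-\eps)}\cdot e^{\eps(-s+(1-u)\alpha)}$ by a factor of $e^{u\alpha(\eps-\eps')}$. Because $\alpha<0$ by Lemma~\ref{lem:upper-final}, $u\geq 0$, and $\eps'\geq\eps$, this extra factor is $\geq 1$, so discarding it only weakens the lower bound. Assembling these pieces gives Equation~\ref{eq:lower-optimization}; no genuinely hard ingredient is involved beyond the sign check on $\alpha$.
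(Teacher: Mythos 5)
Your proposal is correct and follows essentially the same route as the paper: apply Lemma~\ref{lem:ch1-ch2}, relax the optimizing $u$ to a minimum over $[0,1]$, bound $\mathcal{Q}([\eps,\eps'])\leq\mathcal{Q}(\leq\eps')$ by a Chernoff bound at parameter $\alpha$ (valid in the denominator since $1-u\geq 0$), and expand $\pi_{r_j,j}$ from Definition~\ref{def:pi}. Your explicit check that the natural prefactor $e^{\eps'\alpha(1-u)-s\eps}$ dominates the stated $e^{\alpha(\eps'-\eps)}e^{\eps(-s+(1-u)\alpha)}$ pointwise in $(u,s)$, via the sign of $u\alpha(\eps-\eps')$, is exactly the bookkeeping the paper leaves implicit, so nothing is missing.
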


Finally, we claim that, except for the Chernoff factors $ch_1,ch_2$ and the factor $e^{\alpha(\eps'-\eps)}$, Equation~\ref{eq:lower-optimization} is exactly equal to our upper bound, as expressed in either Lemma~\ref{lem:upper-final} or Equation~\ref{eq:relaxed}. We will use the fact that Equation~\ref{lem:derivative-analysis} below was optimized in Lemma~\ref{lem:upper-final} to show how to optimize this related equation with respect to $s$ (deferring for the moment the optimization over $u$).

\begin{lemma}\label{lem:derivative-analysis}
We claim that if $u\in(0,1)$ and the derivative of this expression \begin{equation}\label{eq:bound-without-conditioning2}\log e^{\eps (1-u) \alpha}\prod_j \left(\frac{\sum_i \poii{y_j}^u(q_j \poii{x_{1,j}}+(1-q_j)\poii{x_{2,j}})^{1-u}}{(q_j e^{|x_{1,j}-y_j| \alpha}+(1-q_j) e^{|x_{2,j}-y_j|\alpha})^{1-u}}\right)\end{equation} with respect to any $q_j$ is 0, and the derivative with respect to $\alpha$ is also 0, then the below expression 
when minimized over $s$ achieves its global minimum at $s=0$:

\begin{equation}\label{eq:fake-chernoff2}\hspace{-.5cm}\log e^{-s\eps}\prod_j \left(\sum_i \left(\frac{\poii{y_j}}{(q_j \poii{x_{1,j}}+(1-q_j)\poii{x_{2,j}})}\right)^u(q_j e^{s |x_{1,j}-y_j|}\poii{x_{1,j}}+(1-q_j)e^{s |x_{2,j}-y_j|}\poii{x_{2,j}})\right)\end{equation}
\end{lemma}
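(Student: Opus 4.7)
The plan is to exploit convexity: Equation~\ref{eq:fake-chernoff2}, viewed as a function of $s$, is the log of a moment-generating-function-like expression and is therefore convex in $s$. Hence it suffices to show that its derivative vanishes at $s=0$. First I would rewrite the inner sum using Definition~\ref{def:pi}: since the $i$-sum factors as
\[e^{s|x_{1,j}-y_j|}\cdot q_j\!\sum_i\!\left(\tfrac{\poii{y_j}}{q_j\poii{x_{1,j}}+(1-q_j)\poii{x_{2,j}}}\right)^{\!u}\!\!\poii{x_{1,j}}\;+\;e^{s|x_{2,j}-y_j|}\cdot(1-q_j)\!\sum_i\!\left(\tfrac{\poii{y_j}}{q_j\poii{x_{1,j}}+(1-q_j)\poii{x_{2,j}}}\right)^{\!u}\!\!\poii{x_{2,j}},\]
one recognizes the two factors outside the exponentials as $\pi_{1,j}$ and $\pi_{2,j}$. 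Thus Equation~\ref{eq:fake-chernoff2} equals $-s\eps+\sum_j\log(\pi_{1,j}e^{s|x_{1,j}-y_j|}+\pi_{2,j}e^{s|x_{2,j}-y_j|})$, which is manifestly convex in $s$, and its derivative at $s=0$ is $-\eps+\sum_j\frac{\pi_{1,j}|x_{1,j}-y_j|+\pi_{2,j}|x_{2,j}-y_j|}{A_j}$, where $A_j:=\pi_{1,j}+\pi_{2,j}$.

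Next I would unpack the hypothesis that $\partial/\partial q_j$ of Equation~\ref{eq:bound-without-conditioning2} vanishes. Writing $A_j=\sum_i\poii{y_j}^u(q_j\poii{x_{1,j}}+(1-q_j)\poii{x_{2,j}})^{1-u}$ and $B_j=q_je^{|x_{1,j}-y_j|\alpha}+(1-q_j)e^{|x_{2,j}-y_j|\alpha}$, the stationarity condition simplifies (after cancelling the $1-u$ factor) to
\[\frac{1}{A_j}\!\left(\frac{\pi_{1,j}}{q_j}-\frac{\pi_{2,j}}{1-q_j}\right)=\frac{e^{|x_{1,j}-y_j|\alpha}-e^{|x_{2,j}-y_j|\alpha}}{B_j}.\]
Combined with the automatic identity $\pi_{1,j}+\pi_{2,j}=A_j$, this is a $2{\times}2$ linear system in the pair $(\pi_{1,j}/A_j,\pi_{2,j}/A_j)$; solving it yields the clean identities
\[\frac{\pi_{1,j}}{A_j}=\frac{q_je^{|x_{1,j}-y_j|\alpha}}{B_j},\qquad\frac{\pi_{2,j}}{A_j}=\frac{(1-q_j)e^{|x_{2,j}-y_j|\alpha}}{B_j}.\]

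Substituting these into the expression for the derivative at $s=0$ converts it to
\[-\eps+\sum_j\frac{q_j|x_{1,j}-y_j|e^{|x_{1,j}-y_j|\alpha}+(1-q_j)|x_{2,j}-y_j|e^{|x_{2,j}-y_j|\alpha}}{B_j},\]
which is precisely (the negative of $1/(1-u)$ times) the condition $\partial/\partial\alpha$ of Equation~\ref{eq:bound-without-conditioning2} equals zero; hence this sum equals $\eps$ and the derivative at $s=0$ vanishes. Convexity of Equation~\ref{eq:fake-chernoff2} in $s$ then implies $s=0$ is a global minimum.

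The main obstacle I anticipate is the bookkeeping in step two: matching the seemingly unrelated expressions involving $\pi_{r,j}$ (arising from the lower-bound Chernoff setup) with the $q_j,\alpha,B_j$ quantities from the upper bound. The right viewpoint is that the $\partial/\partial q_j$ condition and the normalization identity $\pi_{1,j}+\pi_{2,j}=A_j$ together force $\pi_{r,j}/A_j$ to agree with the Chernoff-tilted probability $q_{r,j}e^{|x_{r,j}-y_j|\alpha}/B_j$, after which the $\partial/\partial\alpha$ condition handles the rest mechanically.
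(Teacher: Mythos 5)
Your proposal is correct and follows essentially the same route as the paper's proof: both reduce the claim, via convexity of Equation~\ref{eq:fake-chernoff2} in $s$, to showing the derivative vanishes at $s=0$, and both accomplish this by combining the $q_j$-stationarity condition of Equation~\ref{eq:bound-without-conditioning2} (to rewrite the $i$-sums as ratios of exponentials in $\alpha$) with the $\alpha$-stationarity condition (to identify the resulting sum with $\eps$). The only difference is organizational: where the paper decomposes the numerator of the $s$-derivative as $A(\poii{x_{1,j}}-\poii{x_{2,j}})+B(q_j\poii{x_{1,j}}+(1-q_j)\poii{x_{2,j}})$ and substitutes, you solve the $2\times 2$ system to obtain the cleaner identity $\pi_{r,j}/(\pi_{1,j}+\pi_{2,j})=q_{r,j}e^{\alpha|x_{r,j}-y_j|}/B_j$ — a nicer way to see the same cancellation, and one that makes the connection to the tilted coin-flip probabilities of Definition~\ref{def:pi} explicit.
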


\begin{proof}
Equation~\ref{eq:fake-chernoff2} is clearly convex in $s$, since log-convexity is preserved under sums and products; we thus show its $s$ derivative is 0 at $s=0$ to finish the proof of global optimality. The proof is ultimately straightforward, where we find expressions for the derivatives of Equation~\ref{eq:bound-without-conditioning2}, and find the right linear combination of them to imply that the $s$ derivative of Equation~\ref{eq:fake-chernoff2} is 0 at $s=0$.

The $q_j$ derivative of Equation~\ref{eq:bound-without-conditioning2}, which equals 0, is

\begin{align*}
    \hspace{-2cm}(1-u)\left(\frac{\sum_i \left(\frac{\poii{y_j}}{q_j(\poii{x_{1,j}}+(1-q_j)\poii{x_{2,j}}}\right)^u(\poii{x_{1,j}}-\poii{x_{2,j}})}{\sum_i\left(q_j poi( x_{1,j},i)+(1-q_j)poi( x_{2,j},i)\right)^{1-u}\left(poi( y_j,i)\right)^{u}}-\frac{(e^{|x_{1,j}-y_j| \alpha}- e^{|x_{2,j}-y_j|\alpha})}{(q_j e^{|x_{1,j}-y_j| \alpha}+(1-q_j) e^{|x_{2,j}-y_j|\alpha})}\right)
\end{align*}
which implies, since $u\in(0,1)$, that 
\begin{equation}
\label{eq:derivative-of-qj-equals-0}
    \frac{\sum_i \left(\frac{\poii{y_j}}{q_j(\poii{x_{1,j}}+(1-q_j)\poii{x_{2,j}}}\right)^u(\poii{x_{1,j}}-\poii{x_{2,j}})}{\sum_i\left(q_j poi( x_{1,j},i)+(1-q_j)poi( x_{2,j},i)\right)^{1-u}\left(poi( y_j,i)\right)^{u}}=\frac{(e^{|x_{1,j}-y_j| \alpha}- e^{|x_{2,j}-y_j|\alpha})}{(q_j e^{|x_{1,j}-y_j| \alpha}+(1-q_j) e^{|x_{2,j}-y_j|\alpha})}
\end{equation}

On the other hand, the $\alpha$ derivative of Equation~\ref{eq:fake-chernoff2}, which is equals to $0$, is
\begin{equation*}
\epsilon(1-u) - (1-u)\left(\sum_j \frac{(q_j|x_{1,j}-y_j|e^{|x_{1,j}-y_j| \alpha}+(1-q_j) |x_{2,j} - y_j| e^{|x_{2,j}-y_j|\alpha})}{(q_j e^{|x_{1,j}-y_j| \alpha}+(1-q_j) e^{|x_{2,j}-y_j|\alpha})}\right)
\end{equation*}
Therefore, since $u\in(0,1)$, we have
\begin{equation}
\label{eq:derivative-of-alpha}
\sum_j  \frac{(q_j|x_{1,j}-y_j|e^{|x_{1,j}-y_j| \alpha}+(1-q_j) |x_{2,j} - y_j| e^{|x_{2,j}-y_j|\alpha})}{(q_j e^{|x_{1,j}-y_j| \alpha}+(1-q_j) e^{|x_{2,j}-y_j|\alpha})} = \epsilon
\end{equation}

Going back to Equation~\ref{eq:fake-chernoff2}, the $s$ derivative of the log of the $j^\textrm{th}$ term of Equation~\ref{eq:fake-chernoff2}, evaluated at $s=0$ is
\begin{equation*}
    \frac{\sum_i \left(\frac{\poii{y_j}}{q_j(\poii{x_{1,j}}+(1-q_j)\poii{x_{2,j}}}\right)^u (q_j |x_{1,j}-y_j|\poii{x_{1,j}} +(1-q_j) |x_{2,j}-y_j|\poii{x_{2,j}})}{\sum_i \poii{y_j}^u(q_j \poii{x_{1,j}}+(1-q_j)\poii{x_{2,j}})^{1-u}}
\end{equation*}

Notice that the last term in the numerator can be decomposed as \[\hspace{-2cm}(q_j |x_{1,j}-y_j|\poii{x_{1,j}} +(1-q_j) |x_{2,j}-y_j|\poii{x_{2,j}})=A(\poii{x_{1,j}}-\poii{x_{2,j}})+B(q_j \poii{x_{1,j}}+(1-q_j)\poii{x_{2,j}})\]
for $A=q_j(1-q_j)(|x_{1,j}-y_j|-|x_{2,j}-y_j|) \textrm{ and } B=q_j|x_{1,j}-y_j|+(1-q_j)|x_{2,j}-y_j|$.

Substituting back to the $s$ derivative of the log of the $j^\textrm{th}$ term of Equation~\ref{eq:fake-chernoff2}, evaluated at $s=0$, we have

\begin{align*}
&\frac{\sum_i \left(\frac{\poii{y_j}}{q_j(\poii{x_{1,j}}+(1-q_j)\poii{x_{2,j}}}\right)^u A(\poii{x_{1,j}}-\poii{x_{2,j}})}{\sum_i \poii{y_j}^u(q_j \poii{x_{1,j}}+(1-q_j)\poii{x_{2,j}})^{1-u}} \\
& \quad + \frac{\sum_i \left(\frac{\poii{y_j}}{q_j(\poii{x_{1,j}}+(1-q_j)\poii{x_{2,j}}}\right)^u B(q_j \poii{x_{1,j}}+(1-q_j)\poii{x_{2,j}})}{\sum_i \poii{y_j}^u(q_j \poii{x_{1,j}}+(1-q_j)\poii{x_{2,j}})^{1-u}} \\
&= A\frac{(e^{|x_{1,j}-y_j| \alpha}- e^{|x_{2,j}-y_j|\alpha})}{(q_j e^{|x_{1,j}-y_j| \alpha}+(1-q_j) e^{|x_{2,j}-y_j|\alpha})} + B \quad \eqref{eq:derivative-of-qj-equals-0} \\
&= \frac{((A+Bq_j)e^{|x_{1,j}-y_j| \alpha}+((1-q_j)B-A) e^{|x_{2,j}-y_j|\alpha})}{(q_j e^{|x_{1,j}-y_j| \alpha}+(1-q_j) e^{|x_{2,j}-y_j|\alpha})}\\
&= \frac{(q_j|x_{1,j}-y_j|e^{|x_{1,j}-y_j| \alpha}+(1-q_j) |x_{2,j} - y_j| e^{|x_{2,j}-y_j|\alpha})}{(q_j e^{|x_{1,j}-y_j| \alpha}+(1-q_j) e^{|x_{2,j}-y_j|\alpha})}
\end{align*}
which is exactly the $j$-th term of the left hand side of Equation~\ref{eq:derivative-of-alpha}. Hence, summing up the previous expression for all $j$ gives us $\epsilon$.
This shows that the derivative of $s$ at $s=0$ of Equation~\ref{eq:fake-chernoff2} is $-\epsilon + \epsilon = 0$, concluding the proof.
\end{proof}

The next lemma concludes this section, summarizing that, except for the slack in the Chernoff bounds, and our choice of $\eps'>\eps$, our upper and lower bounds match.

\begin{lemma}\label{lem:lower-matching-with-3-factors}
Under the conditions of Lemma~\ref{lem:upper-final}---namely, when the testing upper bound of Equation~\ref{eq:merged-split} is optimized---then we have a lower bound for any testing algorithm that equals our upper bound times $ch_1\cdot ch_2\cdot e^{\alpha(\eps'-\eps)}$.
\end{lemma}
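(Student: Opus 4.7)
My plan is to evaluate the lower-bound expression of Corollary~\ref{cor:ch1-ch2} at the choice $u = u^*$ and $s = 0$, where $u^*$ (together with $\alpha^*, q_j^*, x_{r,j}^*$) denotes the upper-bound optimum of Lemma~\ref{lem:upper-final}, and show this produces exactly $e^{\alpha(\eps'-\eps)}$ times the upper bound of Equation~\ref{eq:opt-reformulated}. Provided the $\min_{u,s}$ in Corollary~\ref{cor:ch1-ch2} is actually realized at this choice (by the arguments below), this yields failure probability $\geq ch_1 \cdot ch_2 \cdot e^{\alpha(\eps'-\eps)} \cdot$ upper bound, as claimed.

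The optimality of $s = 0$ is precisely the conclusion of Lemma~\ref{lem:derivative-analysis}. Its two hypotheses---that $\partial/\partial q_j$ and $\partial/\partial \alpha$ of the upper bound expression (Equation~\ref{eq:bound-without-conditioning2}, equivalent to Equation~\ref{eq:opt-reformulated}) both vanish---are supplied by Lemma~\ref{lem:upper-final}. Combined with convexity in $s$ (Equation~\ref{eq:fake-chernoff2} is a sum of logarithms of sums of log-convex exponentials in $s$), the vanishing derivative at $s=0$ forces $s = 0$ to be the global $s$-minimizer. Observing that Equation~\ref{eq:fake-chernoff2} is, up to a constant factor involving $e^{\alpha(\eps'-\eps)}$, the logarithm of the $s$-dependent factor inside the product of Equation~\ref{eq:lower-optimization} at $u = u^*$, this conclusion transfers from Equation~\ref{eq:fake-chernoff2} to Equation~\ref{eq:lower-optimization}. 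Evaluating at $s=0$ is then routine algebra: the weighted sum $q_j\,\poii{kx_{1,j}}\,e^{s|x_{1,j}-y_j|} + (1-q_j)\,\poii{kx_{2,j}}\,e^{s|x_{2,j}-y_j|}$ in the numerator collapses to $q_j\,\poii{kx_{1,j}} + (1-q_j)\,\poii{kx_{2,j}}$, which merges with the $u$-th power of the same quantity in the denominator to leave $(q_j\,\poii{kx_{1,j}} + (1-q_j)\,\poii{kx_{2,j}})^{1-u}$. Matching terms against Equation~\ref{eq:opt-reformulated}---noting that in this section $j$ indexes individual domain elements, so the coefficient $\histp$ is implicitly $1$---gives the desired identity.

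The delicate step---and what I expect to be the main obstacle---is justifying that the $\min_u$ operation in Corollary~\ref{cor:ch1-ch2} is actually attained at $u = u^*$, since merely evaluating at $u^*$ only upper-bounds the min, the wrong direction for a lower-bound claim. The resolution is an envelope-theorem observation: at the upper-bound optimum of the nested $\min_u \max_{q,x}$ structure, the outer $u$-derivative vanishes, and since $q_j^*, x_{r,j}^*$ maximize the inner expression at $u = u^*$, freezing these at their optimizers makes the $u$-derivative of the inner expression vanish at $u^*$ as well. Coupled with convexity in $u$, this interior critical point $u^*$ is necessarily the $u$-minimizer of the fixed-$q,x$ inner expression, which is exactly the expression appearing inside the $\min_u$ of Corollary~\ref{cor:ch1-ch2} once $s$ is frozen at $0$. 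Thus the $\min_{u,s}$ is attained at $(u^*,0)$ with the value identified above, completing the proof by multiplication through by $ch_1\cdot ch_2\cdot e^{\alpha(\eps'-\eps)}$.
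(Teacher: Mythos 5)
Your proposal is correct and follows essentially the same route as the paper's proof: invoke Lemma~\ref{lem:derivative-analysis} (with hypotheses supplied by Lemma~\ref{lem:upper-final}) to pin the $s$-minimum at $s=0$, then use the vanishing $u$-derivative of Equation~\ref{eq:opt-reformulated} together with log-convexity in $u$ to conclude the $u$-minimum is attained at the upper-bound optimum, where the expression collapses to Equation~\ref{eq:opt-reformulated}. The "delicate step" you flag about the direction of the $\min_u$ bound is resolved exactly as the paper does it, so no further commentary is needed.
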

\begin{proof}
Corollary~\ref{cor:ch1-ch2} provides our testing lower bound, since Lemma~\ref{lem:upper-final} guarantees its conditions are satisfied. We now show that the minimization expression in Equation~\ref{eq:lower-optimization} (namely, without the initial $ch_1\cdot ch_2\cdot e^{\alpha(\eps'-\eps)}$ terms) exactly equals our testing upper bound---as analyzed in Lemma~\ref{lem:upper-final}. 

From Corollary~\ref{cor:ch1-ch2}, the minimization expression in Equation~\ref{eq:lower-optimization} without the initial $e^{(1-u)\alpha}$ term and without the second term in the denominator, is minimized over $s$ when $s=0$; thus the min over $u$ and $s$ is just the min over $u$ of the corresponding expression when $s=0$ is substituted, namely \[\min_{u\in[0,1]} e^{\eps(1-u)\alpha}\prod_j \sum_{i} \frac{poi(k\,y_j,i)^u \left(q_j\,poi(k\,x_{1,j},i)+(1-q_j)\,poi(k\,x_{2,j},i)\right)^{1-u}}{ \left(q_j e^{\alpha|x_{1,j}-y_j|}+(1-q_j) e^{\alpha|x_{2,j}-y_j|}\right)^{1-u}}\]

The logarithm of the expression being minimized is exactly Equation~\ref{eq:opt-reformulated}, which Lemma~\ref{lem:upper-final} guarantees has $u$ derivative equal to 0; further, the expression being minimized is log-convex in $u$ since log-convexity is preserved under both sums and products. Thus its global minimum is attained for the $u$ from the upper bound optimization, and its global minimum equals our upper bound, as expressed in Equation~\ref{eq:opt-reformulated} or Equation~\ref{eq:relaxed}.
\end{proof}

\section{G\"{a}rtner-Ellis Theorem}\label{sec:gartner-ellis}
We introduce a special version of G\"{a}rtner-Ellis Theorem, to analyze the convergence of Chernoff bounds when the number of domain elements $n$ goes to infinity. 

The G\"{a}rtner-Ellis theorem is a generalization of Cram\'{e}r's theorem, where Cram\'{e}r's theorem essentially says that ``in the limit as $n\rightarrow\infty$, Chernoff bounds on the mean of i.i.d. variables are tight, to $1+o(1)$ factors in the exponent. If $Z_n$ denotes the mean of $n$ independent copies of some real-valued random variable, and letting \[\Lambda_n(\lambda) = \log \Exp[e^{\lambda Z_n}]\] denote its corresponding moment generating function, then the log tail probabilities $\Pr[Z_n\geq x]$ are bounded by Chernoff bounds as $\min_{\lambda\geq 0} \Lambda_n(\lambda)-\lambda x=\min_{\lambda\geq 0} \Lambda_n(n\lambda)-n\lambda x$. Since we assume that $Z_n$ is the mean of i.i.d. random variables, then $\frac{1}{n}\Lambda_n(n\lambda)$ is identical for all $n$, and we may instead represent this as $\Lambda(\lambda)$, leading to a log Chernoff bound of $n\min_{\lambda\geq 0} \Lambda(\lambda)-\lambda x=-n\Lambda^*(x)$, where we define \[\Lambda^*(x) =\sup_{\lambda \in \mathbb{R}} \lambda x - \Lambda(\lambda)\] to be the Legendre-Fenchel transform of $\Lambda(\lambda)$. Cram\'{e}r's theorem says that this bound is tight in the limit as $n\rightarrow\infty$, where we scale the log probability by $\frac{1}{n}$ so that the right hand side does not go to infinity with $n$. Namely, while $\log \Pr[Z_n\geq x]\leq -n\Lambda^*(x)$ for any $n$, by Chernoff bounds, Cram\'{e}r's theorem says that the limit is tight, in the sense that $\lim_{n\rightarrow\infty}\frac{1}{n}\log \Pr[Z_n\geq x]= -\Lambda^*(x)$.

The G\"{a}rtner-Ellis theorem generalizes this to non-independent random variables. Namely, $Z_n$ is no longer restricted to be the mean of $n$ i.i.d. random variables but can now be any distribution, depending on $n$ in an almost arbitrary way---provided that the log moment generating function has a limit as $n\rightarrow\infty$, and avoids a few other pathologies as specified in Theorem~\ref{thm:G-E}.

\begin{assumption}
\label{asp:G-E}
For each $\lambda \in \mathbb{R}$, the logarithmic moment generating function has a limit
\begin{equation}
    \Lambda(\lambda) = \lim_{n\rightarrow \infty} \frac{1}{n} \Lambda_n(n\lambda)
\end{equation}
which is finite and differentiable.
\end{assumption}
Then we can introduce the G\"{a}rtner-Ellis Theorem as follows:
\begin{theorem}[special case of G\"{a}rtner-Ellis Theorem]
\label{thm:G-E}
    If Assumption~\ref{asp:G-E} holds for $Z_{n}$, and if $x$ is not a ``discrete point", in the sense that $\Lambda^*$ is continuous at $x$ 
    then we have:
    \begin{equation}
        \lim_{n\rightarrow\infty} \frac{1}{n} \log \Pr[Z_n > x] = \lim_{n\rightarrow\infty} \frac{1}{n} \log \Pr[Z_n \geq x] = - \Lambda^*(x)
    \end{equation}
    if $\Lambda^*$ is non-increasing at $x$, and otherwise
        \begin{equation}
        \lim_{n\rightarrow\infty} \frac{1}{n} \log \Pr[Z_n < x] = \lim_{n\rightarrow\infty} \frac{1}{n} \log \Pr[Z_n \leq x] = - \Lambda^*(x)
    \end{equation}
\end{theorem}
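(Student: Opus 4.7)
The plan is to execute the classical two-sided large-deviations argument: an upper bound by a direct Chernoff/Markov computation, and a matching lower bound via exponential tilting (change of measure). The two cases in the theorem statement are mirror images of one another---use a nonnegative Chernoff parameter for one tail, nonpositive for the other---so I describe only one direction in detail; Assumption~\ref{asp:G-E} (finiteness and differentiability of $\Lambda$) enters symmetrically in both halves.

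For the upper bound, Markov's inequality applied to $e^{n\lambda Z_n}$ yields $\Pr[Z_n \geq x] \leq e^{-n\lambda x + \Lambda_n(n\lambda)}$ for any $\lambda \geq 0$. Dividing by $n$, taking $\log$, passing to the $n \to \infty$ limit using Assumption~\ref{asp:G-E}, and then taking the sup over $\lambda \geq 0$ gives $\limsup_n \frac{1}{n}\log \Pr[Z_n \geq x] \leq -\sup_{\lambda \geq 0}(\lambda x - \Lambda(\lambda))$. In the regime under consideration, convexity of $\Lambda$ together with the monotonicity condition on $\Lambda^*$ at $x$ forces the unrestricted Legendre optimizer to be nonnegative, so the restricted sup equals the full $\Lambda^*(x)$. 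The same computation with $\lambda \leq 0$ handles the opposite tail.

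The matching lower bound, which is the main obstacle since we have no i.i.d.\ structure to exploit directly, uses exponential tilting. Fix a small $\eta > 0$; by differentiability and convexity of $\Lambda$, there is a $\lambda^* \geq 0$ with $\Lambda'(\lambda^*) = x + \eta$. Define the tilted law by $d\tilde{P}_n/dP_n = e^{n\lambda^* Z_n - \Lambda_n(n\lambda^*)}$. Rewriting $\Pr[Z_n \in [x, x+2\eta]]$ as an expectation under $\tilde{P}_n$ and bounding the Radon--Nikodym factor uniformly on this interval gives the key inequality $\Pr[Z_n \in [x, x+2\eta]] \geq e^{-n\lambda^*(x+2\eta) + \Lambda_n(n\lambda^*)} \cdot \tilde{P}_n[Z_n \in [x, x+2\eta]]$. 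The explicit prefactor, normalized by $\frac{1}{n}\log$, converges to $\Lambda(\lambda^*) - \lambda^*(x+2\eta)$, which tends to $-\Lambda^*(x)$ as $\eta \downarrow 0$ by continuity of $\Lambda^*$ at $x$. To show the tilted probability has subexponential decay, a direct calculation gives $\frac{1}{n}\log \Exp_{\tilde{P}_n}[e^{n\mu Z_n}] \to \tilde\Lambda(\mu) := \Lambda(\mu+\lambda^*) - \Lambda(\lambda^*)$, which inherits finiteness and differentiability from $\Lambda$ and satisfies $\tilde\Lambda'(0) = x + \eta$ lying strictly inside the target interval. Applying the already-proven Chernoff upper bound to $\tilde{P}_n$ at both endpoints of $[x, x+2\eta]$ yields exponential decay of $\tilde{P}_n[Z_n \notin [x, x+2\eta]]$, since the tilted rate function is strictly positive away from the tilted mean; hence $\tilde{P}_n[Z_n \in [x, x+2\eta]] \to 1$. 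Letting $\eta \downarrow 0$ closes the bound, and the assumed continuity of $\Lambda^*$ at $x$ collapses the strict and non-strict inequality versions of the tail event.
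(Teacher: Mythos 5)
The paper does not actually prove this statement: Theorem~\ref{thm:G-E} is presented as a special case of the classical G\"artner--Ellis theorem and is invoked as a black box from the large-deviations literature, so there is no internal proof to compare yours against. What you have written is the standard textbook proof of that classical result (Chernoff/Markov for the upper bound, exponential tilting plus a law-of-large-numbers argument under the tilted measure for the lower bound), and the overall architecture is sound; it would serve as a correct, self-contained substitute for the paper's citation.

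Two points in your write-up need to be made honest rather than asserted. First, the step ``by differentiability and convexity of $\Lambda$, there is a $\lambda^* \geq 0$ with $\Lambda'(\lambda^*) = x + \eta$'' does not follow from differentiability and convexity alone: the range of $\Lambda'$ is an interval that need not contain $x+\eta$, and the sign constraint $\lambda^*\geq 0$ additionally requires $x+\eta\geq \Lambda'(0)$. This is precisely the ``exposed point / steepness'' issue in the full G\"artner--Ellis theorem; here it is Assumption~\ref{asp:G-E} (finiteness and differentiability of $\Lambda$ on all of $\mathbb{R}$, hence essential smoothness) together with the hypothesis that $\Lambda^*$ is finite and continuous at $x$ that licenses the existence of the exposing slope, and your proof should invoke those hypotheses explicitly at this step. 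Relatedly, when you send $\eta\downarrow 0$ the prefactor behaves like $-\Lambda^*(x+\eta)-\lambda^*(\eta)\,\eta$, so you also need $\lambda^*(\eta)$ to stay bounded; this again comes from the finiteness of the exposing slope at $x$, not merely from continuity of $\Lambda^*$. Second, you are vague about which monotonicity case of the theorem pairs with which tail: the upper-tail bound $\Pr[Z_n\geq x]$ requires the unrestricted Legendre optimizer to be nonnegative, i.e.\ $x\geq\Lambda'(0)$, i.e.\ $\Lambda^*$ non-\emph{decreasing} at $x$, which is the opposite pairing from the one printed in the theorem statement; you should state the correspondence explicitly rather than leave it to ``the regime under consideration.''
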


\subsection{Identity testing in the limit $n\rightarrow\infty$}

While we show upper and lower bounds for identity testing, we ultimately would like to compare these bounds to say that our tester is near-optimal. The G\"{a}rtner Ellis theorem provides a tool for doing this, in the limit as $n\rightarrow\infty$. Unfortunately, the G\"{a}rtner Ellis theorem does not give much insight into the rate of convergence, and a more targeted investigation of when ``Chernoff bounds are tight" seems warranted. Nonetheless, we view the G\"{a}rtner Ellis theorem as saying, essentially, that we should expect our Chernoff bounds to be tight for finite sample cases, since asymptotically they are tight.

We specifically consider a limit where we send $n\rightarrow\infty$ and $k\rightarrow\infty$ proportionally to $n$, and where the upper bounds of Section~\ref{sec:upper} on the log failure probability are exactly proportional to $n$. Explicitly, for any hypothesis distribution $P$, we consider ``subdividing it by factor $s$" to mean the process where each domain element of $P$ is split into $s$ new domain elements each of $\frac{1}{s}$ the probability; this new distribution, which we denote $P^{sub(s)}$ will have $n\cdot s$ domain elements; to make up for the reduced mass of each domain elements, we will sample $Poi(k\cdot s)$ times instead of $Poi(k)$ times.

The basic claim is that the optimization of Section~\ref{sec:upper} does not depend on $s$, except for trivial scaling. Explicitly, consider Equation~\ref{eq:relaxed} as we modify $s$. The hypothesis histogram entries $\histp$ will scale proportionally to $s$, and we will thus choose to scale the histogram entries of the optimization variables $\histq$ to scale with $s$; the probability masses, as measured by $y_j,x$, scale inversely with $s$, and thus the Poisson parameters $k y_j$ or $k x$ are invariant to $s$; the optimization variables $\mathbf{c},t,t'$ will be invariant to $s$; the constraints will thus be invariant to $s$; thus the overall objective function, since it has a factor of $h$ in both the first and second term, will scale proportionally to $s$. Since this optimization computes an upper bound on the log failure probability of our tester, this means that the tester coefficients ($c_{i,j}$) are independent of $s$, and the failure probability gets raised to the $s$ power (thus decaying exponentially with $s$).

Thus $\frac{1}{s}$ times the log failure probability is independent of $s$; and our overall claim is that this exactly matches our lower bounds, in the limit $s\rightarrow\infty$ (meaning that $n,k\rightarrow\infty$). 

Recall that we constructed a lower bound parameterized by $\eps'$. We first show, by the G\"{a}rtner-Ellis theorem that, for each fixed $\eps'>\eps$, the two Chernoff factors $ch_1,ch_2$ vanish in the limit; then we take the limit $\eps'\rightarrow\eps$ so that the final remaining factor $e^{\alpha(\eps'-\eps)}$ vanishes, leading, by Lemma~\ref{lem:lower-matching-with-3-factors}, to the desired conclusion.

\begin{lemma}
Fixing a hypothesis distribution $P$, and a number of samples $k$, and considering the limit as $s\rightarrow\infty$ of $P^{sub(s)}$, while we take $Poi(k s)$ samples, then, for any fixed $\eps'>\eps$, we have $\lim_{s\rightarrow\infty} \frac{1}{s}\log ch_1(P^{sub(s)},k s,\eps,\eps')=0$ and $\lim_{s\rightarrow\infty} \frac{1}{s}\log ch_2(P^{sub(s)},k s,\eps,\eps')=0$
\end{lemma}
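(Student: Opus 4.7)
The strategy is to apply the G\"artner-Ellis theorem (Theorem~\ref{thm:G-E}), or in some cases the weaker central limit theorem, to show that under subdivision both the tail probabilities in the numerators of $ch_1,ch_2$ and the Chernoff bounds in their denominators decay at the same exponential rate in $s$, forcing $\frac{1}{s}\log ch_i\to 0$. The key enabling observation is that subdivision preserves every relevant structural quantity: in $P^{sub(s)}$, each equivalence class of probability $y_j$ with multiplicity $\histp$ becomes a class of probability $y_j/s$ with multiplicity $s\,\histp$, and the Poisson rate $k$ is replaced by $ks$, so every Poisson parameter $k y_j$, $k x_{1,j}$, $k x_{2,j}$ is invariant. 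Consequently the optimal $\alpha, u, q_j$ of Lemma~\ref{lem:upper-final} and the induced masses $\pi_{1,j},\pi_{2,j}$ of Definition~\ref{def:pi} carry over unchanged per class, every moment generating function of interest factorizes over the $ns$ independent domain elements, and the normalized logarithmic moment generating function is a fixed $s$-independent analytic function $\Lambda(\lambda)$, so Assumption~\ref{asp:G-E} is satisfied.

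For $ch_2$ the argument reduces to the central limit theorem rather than requiring the full force of G\"artner-Ellis. By the active constraint $\sum_j\sum_x\histq|x-y_j|=\eps$ of Equation~\ref{eq:relaxed-bound} invoked in Lemma~\ref{lem:relaxed-bound-tight}, the expected $\ell_1$ distance of $\mathcal{Q}^{sub(s)}$ from $P^{sub(s)}$ equals exactly $\eps$ under subdivision, while its variance is $O(1/s)$. Hence the normalized probability $\Pr[\sum_j X_j\in[\eps,\eps']]/\prod_j(\pi_{1,j}+\pi_{2,j})$ converges to $\tfrac12$ as $s\to\infty$, since the upper end $\eps'$ is fixed strictly above $\eps$ and the lower end $\eps$ coincides with the mean. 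Separately, Lemma~\ref{lem:derivative-analysis} established that the Chernoff expression of Equation~\ref{eq:lower-lower-chernoff}, viewed as a log-convex function of its Chernoff parameter, has derivative zero at the origin and therefore attains its global minimum there, where its value is exactly $\prod_j(\pi_{1,j}+\pi_{2,j})$. Thus the numerator and denominator of $ch_2$ differ only by the normalized-probability factor, giving $ch_2\to\tfrac12$ and in particular $\frac{1}{s}\log ch_2\to 0$.

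For $ch_1$ we need G\"artner-Ellis proper. Because of Poissonization, $\sum_h P(h)^u\mathcal{Q}(h)^{1-u}$ factorizes over the $ns$ domain elements of $P^{sub(s)}$, so $\frac{1}{s}\log\Exp_{h\leftarrow P}\!\left[\left(\mathcal{Q}(h)/P(h)\right)^{1-u}\right]$ is a fixed $s$-independent analytic function of $u$. Applying Theorem~\ref{thm:G-E} at the Chernoff-optimal tilt $u^*$ yields $\frac{1}{s}\log\Pr_{h\leftarrow P}\!\left[\log\frac{\mathcal{Q}(h)}{P(h)}\geq 0\right]\to -\Lambda^*(0)$, which matches the exponential rate of the factorized (unconditional) Chernoff bound exactly; the symmetric statement holds under $h\leftarrow\mathcal{Q}$. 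To transfer from $\mathcal{Q}$ to $\mathcal{Q}_{[\eps,\eps']}$, we use that $\mathcal{Q}([\eps,\eps'])\to\tfrac12$ by the same central limit theorem argument as for $ch_2$, so $\log\mathcal{Q}_{[\eps,\eps']}(h)=\log\mathcal{Q}(h,[\eps,\eps'])+O(1)$; the conditional Chernoff bound $\sum_h P(h)^{u^*}\mathcal{Q}_{[\eps,\eps']}(h)^{1-u^*}$ is therefore sandwiched within constant factors of its unconditional counterpart, and the corresponding tail probabilities differ by only an $O(1)$ additive shift in threshold, which by continuity of $\Lambda^*$ leaves the exponential rate unchanged.

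The main technical obstacle is the lower-bound direction for $ch_1$: one must rule out that the validity conditioning $r\in[\eps,\eps']$ appreciably reduces the mass at the histograms $h$ which dominate the large-deviation tail of the log-likelihood ratio. The plan is to apply the central limit theorem conditionally, to the posterior distribution of $r$ given $h$. For $h$ typical under the Chernoff-optimal tilt $P(h)^{u^*}\mathcal{Q}(h)^{1-u^*}$ (normalized), the posterior on each independent $r_j$ is a non-degenerate Bernoulli whose parameter stays bounded away from $0$ and $1$ (by Lemma~\ref{lem:two-point}, which guarantees $q_j\in(0,1)$ and $x_{1,j}<y_j<x_{2,j}$), so the conditional sum $\sum_j|x_{r_j,j}-y_j|$ satisfies a central limit theorem and $\Pr_{\mathcal{Q}}[r\text{ valid}\mid h]$ is bounded below by a positive constant, i.e. $\mathcal{Q}(h,[\eps,\eps'])\geq\Omega(1)\cdot\mathcal{Q}(h)$ on the dominant event. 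This closes the sandwich and yields $\frac{1}{s}\log ch_1\to 0$ as claimed.
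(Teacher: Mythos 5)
The paper states this lemma without any proof: it is asserted on the strength of the preceding sentence (``We first show, by the G\"artner--Ellis theorem that \ldots the two Chernoff factors $ch_1,ch_2$ vanish in the limit''), so there is no written argument to compare yours against step by step. Your strategy is the one the paper clearly intends---subdivision leaves every per-class quantity (the Poisson rates $k y_j, k x_{1,j}, k x_{2,j}$, the optimal $\alpha, u, q_j$, hence the $\pi_{r,j}$ of Definition~\ref{def:pi}) invariant while multiplying the number of independent factors by $s$, so the normalized log-moment-generating functions are $s$-independent and Assumption~\ref{asp:G-E} holds trivially. Your $ch_2$ argument is essentially complete; the one point to make explicit is that ``the mean of the normalized $X_j$'s equals $\eps$'' is \emph{not} the primal constraint of Equation~\ref{eq:relaxed-bound} (the $X_j$ carry the extra tilt $poi(ky_j,h_j)^u/(\cdots)^u$, so their mean is not a priori that of the untilted coin flips) but is precisely the vanishing of the $s$-derivative of Equation~\ref{eq:fake-chernoff2} at $s=0$, i.e.\ Lemma~\ref{lem:derivative-analysis}, which you do cite in the adjacent sentence. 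With that identification, a Lindeberg CLT for the triangular array gives the claimed limit $\tfrac12$.

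For $ch_1$, the gap you flag yourself is genuine and is the entire difficulty of the lemma. Writing $\log\frac{\mathcal{Q}_{[\eps,\eps']}(h)}{P(h)}=\log\frac{\mathcal{Q}(h)}{P(h)}+\log\Pr[r\in[\eps,\eps']\mid h]-\log\mathcal{Q}([\eps,\eps'])$, the third term is $O(1)$, but the second is $h$-dependent and your justification that it is $O(1)$ on the dominant histograms does not yet hold together: the non-degeneracy of the posterior Bernoullis (from Lemma~\ref{lem:two-point}) controls only the conditional \emph{fluctuations} of $\sum_j|x_{r_j,j}-y_j|$, not the location of its conditional \emph{mean}. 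If for the dominant $h$ that mean sat at $\eps-\Theta(1)$ rather than $\eps-O(1/\sqrt{s})$, then $\Pr[r\text{ valid}\mid h]$ would be exponentially small and the sandwich would fail. What rescues the argument---and what you must actually prove---is that under the jointly tilted measure proportional to $P(h)^{u^*}\mathcal{Q}(h)^{-u^*}\mathcal{Q}(h,r)$ the coordinates $r_j$ are independent with marginals $\pi_{r,j}/(\pi_{1,j}+\pi_{2,j})$, so the expected distance is exactly $\eps$ by Lemma~\ref{lem:derivative-analysis}; a second-moment bound then shows the conditional mean given $h$ is within $O(1/\sqrt{s})$ of $\eps$ outside a set of $h$ of exponentially small tilted measure, after which your conditional CLT gives $\Pr[r\text{ valid}\mid h]=\Omega(1)$ there. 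You must also check that the histograms dominating the G\"artner--Ellis lower bound at threshold $0$ are the typical ones for this particular tilt $u^*$ (they are, since $u^*$ is by definition the tilt whose tilted mean of the log-likelihood ratio is $0$), and restrict the tail event to this sub-event while verifying it still carries the full exponential rate. Until those steps are written down, the key inequality $\mathcal{Q}(h,[\eps,\eps'])\geq\Omega(1)\cdot\mathcal{Q}(h)$ on the dominant event is an assertion, not a proof.
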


Thus by Lemma~\ref{lem:lower-matching-with-3-factors} we have that for any sequence of testers, on input $P^{sub(s)}$ as $s\rightarrow\infty$, their log failure probability, times $\frac{1}{s}$, must have lim inf greater than or equal to our upper bound plus $\alpha(\eps'-\eps)$, where $\alpha$ is negative, and is evaluated from our upper bound at $s=1$. 

Thus sending $\eps'\rightarrow\eps$ yields that any sequence of testers must have log failure probability times $\frac{1}{s}$ that has lim inf (as $s\rightarrow\infty$) greater than or equal to our upper bound. Since our upper bound is explicitly an upper bound on testing performance, then there is a sequence of testers---namely, with coefficients $c_{i,j}$, independent of $s$, such that $\frac{1}{s}$ times the log of its failure probability has the limit exactly as specified by our upper bound, and no testers can do any better in this limit. This yields the final part of our main result, Theorem~\ref{thm:main}.

\bibliography{identity}
\bibliographystyle{alpha}

\end{document}